\newtheorem{proposition}{Proposition}[section]
\newtheorem{remark}{Remark}[section]
\newtheorem{lemma}{Lemma}[section]
\numberwithin{equation}{section}
\begin{document}
\large
\begin{center}
   \textbf{\Large{Spectral density for the Schr\"{o}dinger operator with magnetic field in the unit complex ball: Solutions of evolutionary equations and applications to special functions }}
\end{center}
\begin{center}
\hrule
\end{center}
\begin{center}
   \hspace{-0.25cm}\textbf{Nour eddine Askour $^{1,2,a}$, Mohamed Bouaouid $^{1,b}$ and  Abdelkarim Elhadouni $^{1,c}$}
\end{center}
\begin{center}
$^{1}$ Department of Mathematics, Sultan Moulay Slimane University, Faculty
		of Sciences and Technics, Beni Mellal, BP 523, 23000, Morocco.
\end{center}
\begin{center}
$\&$
\end{center}
\begin{center}
$^{2}$ Department of Mathematics, Mohammed V University, Faculty of Sciences, Rabat, P.O. Box 1014, Morocco.
\end{center}
\begin{center}
\hspace{-1cm}$^{a}$ n.askour@usms.ma,\hspace{0.25cm}$^{b}$ bouaouidfst@gmail.com and \hspace{0.25cm}$^{c}$ Karimfstgma@gmail.com
\end{center}
\section*{Abstract}
The Schwartz kernel of the spectral density for the Schr\"{o}dinger operator with magnetic field in the $n-$dimensional complex ball is given. As applications, we compute the heat, resolvent and the wave kernels. Moreover, the resolvent and wave kernels are used to establish two new formulas for the Gauss-hypergeometric function.
\begin{description}
  \item[Keywords:] Spectral density; Self-adjoint operator; Fourier-Helgason transform; G-invariant Laplacian; Heat kernel; Wave kernel; Gauss-hypergeometric function; Reproducing kernels. 	
  \item[2010 Mathematics Subject Classification:] 32A70; 58C40; 49K20; 34K08.
\end{description}
\newpage
\tableofcontents
\newpage
\section{Introduction}
The spectral density associated with a self-adjoint operator $T$ is considered as a powerful tool for developing more general functional calculus \cite{Est}. In particular, it gives the possibility to define the operator function $f(T)$ for a more generalized function $f$. It is well known that in quantum theory that each observable is represented by a self-adjoint operator on a complex Hilbert space. Thus for the study of quantum dynamic, the spectral density becomes a very efficient tool to solve evolutionary problems associated with such observable.\\
Recall that the Von Neumann spectral theorem says that any self-adjoint operator $T$ densely defined on a complex Hilbert space $H$ has a Stieltjes integral representation
\begin{align}\label{E1.1}
T=\int_{-\infty}^{+\infty } \lambda dE_{\lambda},\hspace{0.2cm}I=\int_{-\infty}^{+\infty}dE_{\lambda},
\end{align}
with respect to the some unique spectral measure $\left\lbrace E_{\lambda}\right\rbrace_{\lambda}$ (\cite{Lei,Kon,Mor}). The spectral density
\begin{align}\label{E1.2}
e_{\lambda}=\dfrac{dE_{\lambda}}{d\lambda},
\end{align}
is understood as an operator-valued distribution. It is an element of the space $\mathrm{\mathcal{D}'}\left(\mathbb{R},L\left(D(T),H\right)\right)$, where $L(D(T),H)$ is the space of bounded operator from $D(T)$ the domain of selfadjointness for T, to the whole Hilbert space $H$. In the case where  $T$ is constant-coefficients elliptic operator with symbol $P$ on the whole space $\mathbb{R}^{n}$, i.e $T=P(i\partial)$, where
$i\partial=i(\frac{\partial}{\partial x_{1}},...,\frac{\partial}{\partial x_{n}})$. Then, the Schwartz kernel of the spectral density associated with its self-adjoint extension can be written as \cite{Est}
\begin{align}\label{E1.3}
e(\lambda,x,y)=\frac{1}{(2\pi)^{n}}\left<e^{i(x-y).\xi},\delta(P(\xi)-\lambda)\right>,
\end{align}
where $\delta(P(\xi)-\lambda)$ is the Layer distribution associated with the function $P(\xi)-\lambda$ and the dot product $(x-y).\xi$ means the classical scalar product in $\mathbb{R}^{n}$.\\
For the general case, the explicit computation of the spectral density $e_{\lambda}$ in the concrete cases depends on the spectral tools which we dispose of. For example, based on the explicit resolvent kernel of the Khon-Laplacian in \cite{Ask} the authors have used the Cauchy-integral representation of the resolvent operator with respect to the spectral measure for computing the spectral density. Also the method used in \cite{Ask} has been fully adapted to the case of Heisenberg quaternionic Laplacian \cite{Zak}. In \cite{Ask2} the author has used the explicit expression of the spectral family associated with the Schr\"{o}dinger operator with the magnetic field in the n-Euclidean complex space, to get by a direct computation the corresponding spectral density.\\
In the present work, the explicit expression of the inverse Fourier-Helgason transform helps us to compute the spectral density for a Schr\"{o}dinger operator with magnetic field on the complex hyperbolic space. Precisely, we will be concerned  with the following class of  Schr\"{o}dinger operator
\begin{align}\label{E1.4}
\Delta_{\nu}=4(1-\mid z\mid^{2})\{\sum_{1\leq i,j\leq n}(\delta_{ij}-z_{i}\bar{z}_{j})\frac{\partial^{2}}{\partial z_{i}\partial \bar{z}_{j}}-\nu\sum_{j=1}^{n}\bar{z}_{j}\frac{\partial}{\partial \bar{z}_{j} }\},
\end{align}
in the unit ball $\mathbb{B}_{n}$ of the $n-$dimensional complex space, where the parameter $\nu$ is such that  $\nu>n$ and $\nu\in\mathbb{R}\backslash\mathbb{Z}$. The operator $\Delta_{\nu}$ will be viewed as a linear operator  acting on the Hilbert space $L^{2}_{\nu}(\mathbb{B}_{n}):=L^{2}(\mathbb{B}_{n},d\mu_{\nu}(z))$, where $d\mu_{\nu}=(1-\mid z \mid^{2})^{\nu-n-1}dm(z)$ with $dm(z)$ is the Lebesgue measure on $\mathbb{C}^n=\mathbb{R}^{2n}$. This operator has been largely considered in several equivalent forms by many authors in different contexts \cite{Zha,Ahe,Gha,Int}.\\
In order to make the operator $\Delta_{\nu}$ self-adjoint on the Hilbert space $L^{2}_{\nu}(\mathbb{B}_{n})$, we must consider it on its maximal domain
\begin{align}\label{E1.5}
\mathcal{D}=\lbrace  F\in L_{\nu}^{2}(\mathbb{B}_{n}), \hspace{0.5cm}\Delta_{\nu}F\in L_{\nu}^{2}(\mathbb{B}_{n}) \rbrace.
\end{align}
Then, the spectrum of the operator $\Delta_{\nu}$ decomposes as
\begin{align}\label{E1.6}
\sigma(\Delta_{\nu})=\sigma_{p}(\Delta_{\nu})\cup \sigma_{c}(\Delta_{\nu}),
\end{align}
where the point spectrum $\sigma_{p}(\Delta_{\nu})$ is given by
\begin{align}\label{E1.7}
\sigma_{p}(\Delta_{\nu})=\{-(\lambda_{\ell}^{2}+(n-\nu)^{2}),\hspace{0.25cm}\lambda_{\ell}=-i(\nu-n-2\ell),\hspace{0.25cm}
\ell=0,...,[\frac{\nu-n}{2}]\},
\end{align}
and the continuous spectrum is given by
\begin{align}\label{E1.8}
\sigma_{c}(\Delta_{\nu})=\{-(\lambda^{2}+(\nu-n)^{2}),\hspace{0.25cm}\lambda\in \mathbb{R}\}.
\end{align}
In view point of spectral theory, the study of the operator $\Delta_{\nu}$ on the domain $\mathcal{D}$ is equivalent to the study of the following operator on the same domain
\begin{align}\label{E1.9}
\tilde{\Delta}_{\nu}=-(\Delta_{\nu}+(n-\nu)^{2}),
\end{align}
which admits the set of positive real numbers as continuous spectrum. Thus, it suffice to compute the spectral density for the operator $\tilde{\Delta}_{\nu}$.\\
Before going a head, we give a concise picture of our results. Let then consider the spectral decomposition
\begin{align}\label{E1.10}
\tilde{\Delta}_{\nu}=\int_{-\infty}^{+\infty}sdE_{s}^{\nu},\hspace{0.2cm}I=\int_{-\infty}^{+\infty}dE_{s}^{\nu},
\end{align}
where $\{E_{s}^{\nu},\hspace{0.2cm}s\in\mathbb{R}\}$ is the unique corresponding spectral measure. We are precisely concerned with the Schwartz kernel of the spectral density
 \begin{align}\label{E1.11}
 e_{s}^\nu=\dfrac{dE_{s}^{\nu}}{ds},
 \end{align}
which is an operator valued distribution, element of $\mathcal{D'}(\mathbb{R},L(\mathcal{D}),\mathrm{L}_{\nu}^{2}(\mathbb{B}_{n}))$. We prove that this Schwartz kernel can be expressed as
\begin{align}\label{E1.12}
\nonumber e^\nu(s,w,z)&=\frac{\Gamma(n)}{4\pi^{n+1}2^{2(\nu-n)}}(1-<z,w>)^{-\nu}|C_{\nu}(\sqrt{s})|^{-2}(\sqrt{s})^{-1}
\chi_{+}(s)\phi_{\sqrt{s}}^{(n-1,-\nu)}(d(z,w))\\
&+\sum_{j=0}^{[\frac{\nu-n}{2}]}c_{j}\frac{j!}{(n)_{j}}(1-<z,w>)^{-\nu}P_{j}^{(n-1,-\nu)}(\cosh(2d(z,w)))\delta(s-s_{j}),
\end{align}
with
\begin{align}\label{E1.13}
 s_{j}=-(2j+\nu-n)^{2};j=0,...,[\frac{\nu-n}{2}],
 \end{align}
\begin{align}\label{E1.14}
c_{j}=\frac{2\Gamma(n+j)}{\pi^{n}\Gamma(n)j!}\frac{(\nu-n-2j)\Gamma(\nu-j)}{\Gamma(\nu-n-j+1)},
\end{align}
$\chi_{+}(s)$ is the characteristic function of the set of positive real numbers and
\begin{align}\label{E1.15}
\phi^{(\alpha,\beta)}_{\lambda}(t)=\prescript{}{2}{F}_1^{}\left( \frac{\alpha+\beta+1-i\lambda}{2},\frac{\alpha+\beta+1+i\lambda}{2},1+\alpha;-\sinh^{2}t\right),
\end{align}
 is the Jacobi function (\cite[p.5]{Koo}).  $P^{(\alpha,\beta)}_j(x)$ denote the classical Jacobi polynomial of degree $j$ \cite{Bea} and $d(z,w)$ is the distance  corresponding to the Bergmann  metric in the unit ball $\mathbb{B}_{n}$ (\cite[p.25]{Zhu})
\begin{align}\label{E1.16}
\cosh^{2}(d(z,w))=\frac{\mid 1-<z,w>\mid^{2}}{(1-\mid z\mid^{2})(1-\mid w\mid^{2})}, (z,w)\in\mathbb{B}_{n}\times\mathbb{B}_{n},
\end{align} The function
\begin{align}\label{E1.17}
c_{\nu}(\lambda)=\dfrac{2^{-\nu+n-i\lambda}\Gamma(n)\Gamma(i\lambda)}{\Gamma(\frac{i\lambda+n-\nu}{2})
\Gamma(\frac{i\lambda+n+\nu}{2})}
\end{align}
is the analogous of the Harish-Chandra C-function defined in \cite{Zha}. For more detail, we can also see \cite{Koo}.
As a consequence of the spectral density, we can give for the operator $\Delta_{\nu}$ the heat semigroup
\begin{align}\label{E1.18}
e^{t\Delta_{\nu}}[\varphi](z)=\int_{\mathbb{B}_{n}}K_{\nu}(t,z,w)\varphi(w)d\mu_{\nu}(w),
\end{align}
where the heat kernel $K_{\nu}(t,z,w)$ is given by
\begin{align}\label{E1.19}
\nonumber K_{\nu}(t,z,w)&=(1-<z,w>)^{-\nu}\sum_{j=0}^{[\frac{\nu-n}{2}]}\tau_{j}e^{4j(j+n-\nu)t}P_{j}^{(n-1,-\nu)}(\cosh(2d(z,w))\\
\nonumber &+(1-<z,w>)^{-\nu}e^{-t(\nu-n)^{2}}\frac{\Gamma(n)}{2\pi^{n+1}2^{2(\nu-n)}}\\
&\times \int_{0}^{+\infty}e^{-t\lambda^{2}}\mid C_{\nu}(\lambda)\mid^{-2}\prescript{}{2}{F}_1^{}\left( \frac{n-\nu-i\lambda}{2},\frac{n-\nu+i\lambda}{2},n;-\sinh^{2}(d(z,w))\right)d\lambda,
\end{align}
with the constant $\tau_{j}=\frac{2(\nu-n-2j)\Gamma(\nu-j)}{\pi^{n}\Gamma(\nu-n-j+1)}$.
From the above heat kernel formula, we give also the resolvent kernel of the operator $\Delta_\nu$. Precisely, we have
 \begin{align}\label{E1.20}
(\xi-\Delta_{\nu})^{-1}[F](z)=\int_{\mathbb{B}_{n}}R_{\nu}(\xi,z,w)F(w)d\mu_{\nu}(w),\hspace{0.2cm}Re(\xi)> \omega(\nu,n),
\end{align}
where  $\omega(\nu,n)=p_{n}^{\nu}-(n-\nu)^{2}$ with $p_{n}^{\nu}=\max\{\mid s_{j}\mid,\hspace{0.2cm}0\leq j <\frac{\nu-n}{2}\}$ and $s_{j}=-(2j+n-\nu)^{2}$. The resolvent kernel $R(\xi,z,w)$ is given as follows
\begin{align}\label{E1.21}
\nonumber R(\xi,z,w)&=(1-\left<z,w\right>)^{-\nu}\sum_{j=0}^{[\frac{\nu-n}{2}]}\tau_j\frac{1}{\lambda_{j}^{2}+(\nu-n)^2+\xi}P_j^{(n-1,-\nu)}(\cosh(2d(z,w)))\\
\nonumber&+(1-\left<z,w\right>)^{-\nu}\frac{\Gamma(n)}{2\pi^{n+1}2^{2(\nu-n)}}\int_0^{+\infty}
\frac{|c_\nu(\lambda)|^{-2}}{\lambda^2+(\nu-n)^2+\xi}\\
& \times\prescript{}{2}{F}_1^{}\left(\frac{n-\nu-i\lambda}{2},\frac{n-\nu-i\lambda}{2},n,-\sinh^2(d(z,w))\right)d\lambda,
\end{align}
where  $\lambda_{j}=-i(\nu-n-2j)$ for $0\leq j<\frac{\nu-n}{2}$.\\
Also by using the spectral density, we solve for the operator $\Delta_{\nu}$ the following wave Cauchy problem
\begin{align}\label{E1.22}
\left\{
  \begin{array}{ll}
    \dfrac{\partial ^{2} u(t,z)}{\partial t^{2}}=\vartriangle _{\nu }u(t,z), \hspace{0.2cm}(t,z)\in \mathbb{R}\times\mathbb{B}_{n},&\hbox{} \\\\
   u(0,z)=0, \hspace{0.2cm}\dfrac{\partial u(0,t) }{\partial t}=f(z)\in C^{\infty}_{0}(\mathbb{B}_{n}). & \hbox{}
  \end{array}
\right.
\end{align}
Precisely, we have
\begin{align}\label{E1.23}
u(t,z)=\int_{\mathbb{B}_{n}}W_{\nu}(t,z,w)F(w)d\mu_{\nu}(w),
\end{align}
where the wave kernel is given by
\begin{align}\label{E1.24}
\nonumber W_{\nu}&(t,z,w)=(1-<z,w>)^{-\nu}\sum_{j=0}^{[\frac{\nu-n}{2}]}\tau_{j}
\dfrac{\sin(2t\sqrt{j(\nu-n-j)}}{2\sqrt{j(\nu-n-j)}}P_{j}^{(n-1,-\nu)}(\cosh(2d(z,w)))\\
\nonumber &+(1-<z,w>)^{-\nu}\frac{\Gamma(n)}{2\pi^{n+1}2^{2(\nu-n)}}\\
\nonumber \\
&\times \int_{0}^{+\infty}\dfrac{\sin(t\sqrt{\lambda^2+(n-\nu)^{2}})}{\sqrt{\lambda^2+(n-\nu)^{2}}}\mid C_{\nu}(\lambda)\mid^{-2}\prescript{}{2}{F}_1^{}\left( \frac{n-\nu-i\lambda}{2},\frac{n-\nu+i\lambda}{2},n;-\sinh^{2}(d(z,w))\right) d\lambda.
\end{align}
We end this summary  by given an application to special functions. Precisely, we obtain (up to our knowledge) two new integral formulas for the Gauss-hypergeometric function as will be stated below in the two following propositions.
\begin{proposition}\label{P1.1} Let $\nu>n$, $\nu\in \mathbb{R}\setminus\mathbb{Z}$ and $0\leq x<\sinh(|t|)$ for $t\in \mathbb{R}$, then we have the following integral formula
\begin{align}\label{E1.25}
\nonumber &\int_{0}^{+\infty}\left|\frac{\Gamma(\frac{i\lambda+n-\nu}{2})\Gamma(\frac{i\lambda+n+\nu}{2})}{\sqrt{\lambda}\Gamma(i\lambda)}\right|^{2}
\times\prescript{}{2}{F}_1^{}\left( \frac{n-\nu-i\lambda}{2},\frac{n-\nu+i\lambda}{2},n;-x\right)\sin(t\lambda)d\lambda\\
\nonumber &=(-1)^{n-1}\pi\frac{\Gamma(n-\frac{1}{2})}{\Gamma(n)}(1+x)^{\frac{\nu-n}{2}}(\frac{\sinh^{2}(t)}{1+x}-1)^{-n+\frac{1}{2}}_{+}\\
\nonumber &\times\prescript{}{2}{F}_1^{}\left(1-n+\nu,1-n-\nu,\frac{3}{2}-n,\frac{1}{2}-\frac{\cosh(t)}{2\sqrt{1+x}}\right)\\
&-2^{2(\nu-n+1)}\frac{\pi}{\Gamma(n)}\sum_{j=0}^{[\frac{\nu-n}{2}]}\frac{(\nu-n-2j)\Gamma(\nu-j)}{\Gamma(\nu-n-j+1)} P^{(n-1,-\nu)}_{j}(2x+1)\frac{\sinh(t(2j+n-\nu))}{2j+n-\nu}.
\end{align}
\end{proposition}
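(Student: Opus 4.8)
The plan is to read off the left-hand side of \eqref{E1.25} as the continuous-spectrum part of the Schwartz kernel of the operator $\sin\!\big(t\sqrt{\tilde\Delta_\nu}\big)/\sqrt{\tilde\Delta_\nu}$ — equivalently, of the solution operator $f\mapsto v(t,\cdot)$ of the shifted wave equation $\partial_t^2 v+\tilde\Delta_\nu v=0$, $v(0,\cdot)=0$, $\partial_t v(0,\cdot)=f$ — and then to compute that same kernel a second time, in closed form, using the radial structure of $\mathbb B_n$. Starting from the spectral resolution \eqref{E1.10} and the spectral density \eqref{E1.12}, the kernel of $\sin(t\sqrt{\tilde\Delta_\nu})/\sqrt{\tilde\Delta_\nu}$ is $S(t,z,w)=\int_{\mathbb R}\frac{\sin(t\sqrt s)}{\sqrt s}\,e^{\nu}(s,z,w)\,ds$; since $s\mapsto\frac{\sin(t\sqrt s)}{\sqrt s}$ is entire, the discrete eigenvalues $s_j=-(2j+\nu-n)^2<0$ contribute $\frac{\sinh(t(2j+\nu-n))}{2j+\nu-n}$. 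In the continuous part I substitute $s=\lambda^2$, note $\phi_\lambda^{(n-1,-\nu)}(d)={}_2F_1\big(\tfrac{n-\nu-i\lambda}{2},\tfrac{n-\nu+i\lambda}{2},n;-\sinh^2 d\big)$ from \eqref{E1.15}, and use a short Gamma computation based on \eqref{E1.17} to rewrite $\lambda^{-1}|C_\nu(\lambda)|^{-2}$ as an explicit constant times $\big|\Gamma(\tfrac{i\lambda+n-\nu}{2})\Gamma(\tfrac{i\lambda+n+\nu}{2})/(\sqrt\lambda\,\Gamma(i\lambda))\big|^{2}$; after dividing out the common factor $(1-\langle z,w\rangle)^{-\nu}$ and putting $x=\sinh^{2}d(z,w)$ (so $\cosh 2d=2x+1$), the continuous part of $S(t,z,w)$ is precisely the integral on the left of \eqref{E1.25} up to an explicit constant.

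The second and decisive step is to evaluate the full kernel $S(t,z,w)$ in closed form. For this I pass to the radial picture: on $d(z,w)$-radial functions the operator $\Delta_\nu$ is conjugate, through the Abel transform $\mathcal A$ attached to the Jacobi parameters $(\alpha,\beta)=(n-1,-\nu)$ (this is the inverse Fourier–Helgason machinery already invoked in the paper), to a constant-coefficient operator $\partial_r^2-(n-\nu)^2$ on $\mathbb R_{\ge 0}$; the $(n-\nu)^2$ shift is exactly the one subtracted in \eqref{E1.9}, so $\tilde\Delta_\nu$ is conjugate to $-\partial_r^2$. Consequently $\mathcal A$ turns the shifted wave equation into the one-dimensional wave equation, whose solution with the given Cauchy data is d'Alembert's formula; its kernel is the sharp indicator of $\{|r-r'|<|t|\}$, and this is the origin of the characteristic-function (support) factor $\big(\tfrac{\sinh^2 t}{1+x}-1\big)_+$. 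Applying the inverse Abel transform $\mathcal A^{-1}$ then produces the closed form: for the integer parameter $\alpha=n-1$ it is a composition of $n-1$ first-order operators of the type $\tfrac{1}{\sinh r}\partial_r$, which yields the sign $(-1)^{n-1}$, the constant $\Gamma(n-\tfrac12)/\Gamma(n)$ and the negative power $(\cdots)^{-n+\frac12}_+$, while the non-integer parameter $\beta=-\nu$ contributes a Weyl fractional integral of order $\nu$, which produces the factor $(1+x)^{(\nu-n)/2}$ and the residual Gauss hypergeometric function ${}_2F_1\big(1-n+\nu,1-n-\nu,\tfrac32-n;\tfrac12-\tfrac{\cosh t}{2\sqrt{1+x}}\big)$. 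Collecting the constants gives $S(t,z,w)=(1-\langle z,w\rangle)^{-\nu}\times(\text{const})\times[\text{first line of the right-hand side of }\eqref{E1.25}]$.

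To finish, equate the two evaluations of $S(t,z,w)$: the spectral one is $[\text{continuous part}]+[\text{discrete part}]$ and the geometric one is the closed form, so the continuous part equals the closed form minus the discrete part; the discrete coefficients simplify, via $(n)_j=\Gamma(n+j)/\Gamma(n)$ together with \eqref{E1.14}, to match the sum in \eqref{E1.25}, and after multiplying through by the appropriate explicit constant times $(1-\langle z,w\rangle)^{\nu}$ (which also pins down the precise normalization relating $C_\nu$ and $c_\nu$) one obtains \eqref{E1.25} on the stated range. I expect the main obstacle to be the second step — writing down and rigorously justifying the explicit inverse Abel transform in this mixed integer/non-integer parameter situation, tracking every constant, and handling the $(\,\cdot\,)_+$ distributionally — and, secondarily, justifying the conditional convergence of the oscillatory integral on the left (its integrand grows polynomially in $\lambda$), which is what forces the restriction on $x$ and $t$. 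A longer but more elementary alternative, bypassing the wave equation, would be to prove \eqref{E1.25} directly as a hypergeometric identity: expand ${}_2F_1(\cdots;-x)$ in powers of $x$, integrate term by term against $\sin(t\lambda)\,\big|\Gamma(\tfrac{i\lambda+n-\nu}{2})\Gamma(\tfrac{i\lambda+n+\nu}{2})/(\sqrt\lambda\,\Gamma(i\lambda))\big|^2$ using Mellin–Barnes/beta-integral evaluations, and match the resulting series against the power series of the right-hand side.
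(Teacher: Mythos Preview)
Your overall architecture is exactly the paper's: write the kernel of $\sin(t\sqrt{\tilde\Delta_\nu})/\sqrt{\tilde\Delta_\nu}$ once via the spectral density \eqref{E1.12} (giving the oscillatory integral plus the discrete sum) and once in closed form, then subtract. The only substantive difference is how you obtain the closed form. The paper does not derive it at all: it simply quotes the explicit wave kernel $K_n^{\alpha\beta}(t,z,w)$ from Intissar--Ould~Moustapha \cite{Int}, specializes to $(\alpha,\beta)=(0,-\nu)$, observes that the associated Cauchy problem $W_n^{0,-\nu}$ is precisely $\partial_t^2 u+\tilde\Delta_\nu u=0$, and invokes uniqueness. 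You instead propose to \emph{rederive} that closed form via the Jacobi/Abel transform intertwining $\tilde\Delta_\nu\leftrightarrow -\partial_r^2$ and d'Alembert. That is a legitimate and standard route (and is essentially how \cite{Int} proceeds), but it is considerably more work and, as you correctly flag, the main risk is bookkeeping: the inverse Abel transform in the mixed integer/non-integer regime $(\alpha,\beta)=(n-1,-\nu)$, the distributional handling of the $(\,\cdot\,)_+^{-n+1/2}$ factor, and tracking all constants. If you simply cite \cite{Int} for the closed-form kernel, your proof collapses to the paper's and becomes a few lines; if you insist on the Abel-transform derivation, you gain self-containedness at the cost of reproducing a nontrivial computation already in the literature. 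Your alternative Mellin--Barnes route would bypass the wave equation entirely and is genuinely different, but is likely much longer.
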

\begin{proposition}\label{P1.2} Let $\nu>n$, $\nu\in \mathbb{R}\setminus\mathbb{Z}$ and $\mu$ is a complex number such that $\mu\neq-i(2\ell+n\pm\nu)$ for $\ell=0,1,2,...$ and $Re(\mu^{2})<-P_{n}^{\nu}$, with $p_{n}^{\nu}=\max\{\mid s_{j}\mid,\hspace{0.2cm}0\leq j <\frac{\nu-n}{2}\}$ and $s_{j}=-(2j+n-\nu)^{2}$. Then, we have the following integral formula
\begin{align}\label{E1.26}
\nonumber&\int_0^{+\infty}\left|\frac{\Gamma(\frac{i\lambda+n-\nu}{2})
\Gamma(\frac{i\lambda+n+\nu}{2})}{\Gamma(i\lambda)}\right|^{2}\frac{1}{\lambda^2-\mu^2}\times
 \prescript{}{2}{F}_1^{}\left(\frac{n-\nu-i\lambda}{2},\frac{n-\nu+i\lambda}{2},n,-x\right)d\lambda\\
\nonumber&=\pi2^{2(\nu-n)}\frac{\Gamma(\frac{n-i\mu+\nu}{2})\Gamma(\frac{n-i\mu-\nu}{2})}{\Gamma(n)\Gamma(1-i\mu)}
(1+x)^{\frac{\nu+i\mu}{4}-\frac{n}{2}}\\
\nonumber&\times\prescript{}{2}{F}_1^{}\left(\frac{n-i\mu+\nu}{2},\frac{n-i\mu-\nu}{2},1-i\mu,\frac{1}{1+x}\right)\\
&-\frac{4\pi}{\Gamma(n)}2^{2(\nu-n)}\sum_{j=0}^{[\frac{\nu-n}{2}]}\frac{(\nu-n-2j)\Gamma(\nu-j)}{\Gamma(\nu-n-j+1)}\frac{1}
{\lambda_{j}^{2}-\mu^{2}}P_j^{(n-1,-\nu)}(2x+1).
\end{align}
\end{proposition}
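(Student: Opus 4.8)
The plan is to prove \eqref{E1.26} by writing the resolvent kernel of $\Delta_\nu$ in two different ways and comparing. On one side, formula \eqref{E1.21} expresses $R(\xi,z,w)$ as the sum of a finite ``discrete'' part over $j$ and a spectral integral against $|c_\nu(\lambda)|^{-2}$, valid for $\mathrm{Re}(\xi)>\omega(\nu,n)$, and already established in the excerpt from the spectral density \eqref{E1.12} (equivalently, from the heat kernel \eqref{E1.19} via the Laplace transform $(\xi-\Delta_\nu)^{-1}=\int_0^{+\infty}e^{-\xi t}e^{t\Delta_\nu}\,dt$). On the other side, since $(1-\langle z,w\rangle)^{\nu}R(\xi,z,w)$ depends on $(z,w)$ only through the geodesic distance $r=d(z,w)$ and, off the diagonal, solves the radial part of $(\xi-\Delta_\nu)u=0$ — a Jacobi-type hypergeometric ODE in $r$ — it must coincide, up to a normalizing constant, with the Jacobi function of the second kind that is $L^2_\nu$ at infinity; this is precisely the single ${}_2F_1$ with parameters $\tfrac{n-i\mu\pm\nu}{2}$ and argument $\tfrac{1}{\cosh^2 r}=\tfrac{1}{1+x}$ appearing on the right of \eqref{E1.26} — the closed-form Green's function of the invariant Laplacian on $\mathbb{B}_n$ recorded in \cite{Zha}. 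Matching this solution to the unit strength of the delta source at $r=0$ fixes the constant $\pi2^{2(\nu-n)}\Gamma(\tfrac{n-i\mu+\nu}{2})\Gamma(\tfrac{n-i\mu-\nu}{2})/\bigl(\Gamma(n)\Gamma(1-i\mu)\bigr)$.

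Concretely, I would set $\xi=-\mu^2-(n-\nu)^2$, so that $\mathrm{Re}(\xi)>\omega(\nu,n)$ becomes $\mathrm{Re}(\mu^2)<-p_n^\nu$, the excluded values $\mu=-i(2\ell+n\pm\nu)$ are exactly the poles of $\Gamma(\tfrac{n-i\mu\pm\nu}{2})$, and $\lambda^2+(\nu-n)^2+\xi=\lambda^2-\mu^2$, $\lambda_j^2+(\nu-n)^2+\xi=\lambda_j^2-\mu^2$. Then insert the explicit $C$-function \eqref{E1.17}, which for real $\lambda$ gives $|c_\nu(\lambda)|^{-2}=\dfrac{2^{2(\nu-n)}}{\Gamma(n)^2}\left|\dfrac{\Gamma(\frac{i\lambda+n-\nu}{2})\Gamma(\frac{i\lambda+n+\nu}{2})}{\Gamma(i\lambda)}\right|^2$, together with $\tau_j=\dfrac{2(\nu-n-2j)\Gamma(\nu-j)}{\pi^n\Gamma(\nu-n-j+1)}$ and the identities $\cosh(2r)=2x+1$, $\cosh^2 r=1+x$ with $x=\sinh^2 d(z,w)\in[0,+\infty)$. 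Equating the two expressions for $(1-\langle z,w\rangle)^{\nu}R(\xi,z,w)$ and clearing the common powers of $2$, $\pi$ and $\Gamma(n)$, the prefactors $(1-\langle z,w\rangle)^{\mp\nu}$ cancel; the spectral integral becomes the left-hand side of \eqref{E1.26}, the closed-form ${}_2F_1$ becomes the first term on the right, and the discrete sum, transferred to the right-hand side, becomes the second term with the constant $\dfrac{4\pi}{\Gamma(n)}2^{2(\nu-n)}\dfrac{(\nu-n-2j)\Gamma(\nu-j)}{\Gamma(\nu-n-j+1)}$ and factor $\dfrac{1}{\lambda_j^2-\mu^2}P_j^{(n-1,-\nu)}(2x+1)$.

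Finally, both sides are holomorphic in $\mu$ on the admissible region: the integrand decays rapidly since $|\Gamma(\tfrac{i\lambda+n-\nu}{2})\Gamma(\tfrac{i\lambda+n+\nu}{2})/\Gamma(i\lambda)|^2=O(|\lambda|^{n-1}e^{-\pi|\lambda|})$ as $|\lambda|\to\infty$ by Stirling, and the hypothesis $\mathrm{Re}(\mu^2)<-p_n^\nu$ keeps the pole $\lambda^2=\mu^2$ off the positive real axis (and in particular ensures $\mu^2\neq\lambda_j^2$). Hence the identity, once proved on the range of $\xi$ where \eqref{E1.21} is valid, extends by analytic continuation to all $\mu$ allowed in the statement; and since both sides are real-analytic in $x$, the identity proved for $x$ of the form $\sinh^2 d(z,w)$ holds for every $x\ge 0$ (and then on $x>-1$ by continuation).

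The main obstacle is the second representation of the resolvent: one must correctly single out the $L^2_\nu$-admissible Jacobi function of the second kind, determine its normalization against the diagonal singularity, and then push through the Gamma-function bookkeeping to recognise the resulting constant as $\pi2^{2(\nu-n)}\Gamma(\tfrac{n-i\mu+\nu}{2})\Gamma(\tfrac{n-i\mu-\nu}{2})/(\Gamma(n)\Gamma(1-i\mu))$ with the power $(1+x)^{\frac{\nu+i\mu}{4}-\frac{n}{2}}$. By contrast, verifying that the discrete spectral terms agree in the two representations (same $\lambda_j$, same residues) is routine, being a direct comparison of \eqref{E1.7} with \eqref{E1.13}--\eqref{E1.14}.
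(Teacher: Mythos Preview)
Your strategy is the same as the paper's: equate the spectral-integral form of the resolvent kernel \eqref{E1.21} (after the substitution $\xi=-\mu^2-(n-\nu)^2$, so that $\lambda^2+(\nu-n)^2+\xi=\lambda^2-\mu^2$ and $\mathrm{Re}(\xi)>\omega(\nu,n)\Leftrightarrow\mathrm{Re}(\mu^2)<-p_n^\nu$) with a closed-form expression for the same resolvent, then isolate the integral and insert the explicit $|c_\nu(\lambda)|^{-2}$ from \eqref{E1.17}. The bookkeeping with $x=\sinh^2 d(z,w)$, $\cosh(2d)=2x+1$, $\cosh^2 d=1+x$, and the constants $\tau_j$ is exactly what the paper does.

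The one genuine difference is how the closed form is obtained. You propose to identify $(1-\langle z,w\rangle)^\nu R(\xi,z,w)$ directly as the $L^2_\nu$-decaying Jacobi function of the second kind and to fix its normalization by matching the strength of the diagonal singularity; you flag this as the main obstacle, and it is real work (it is not, as far as I can see, simply ``recorded in \cite{Zha}''). The paper instead bypasses this computation entirely: it quotes the explicit Green kernel $R_{\frac{\nu}{2},1}(\mu,z,w)$ for the operator $H_{\frac{\nu}{2},1}$ from \cite{Gha}, already given with the correct constant $C^n_{\frac{\nu}{2},1}(\mu)=\dfrac{1}{2\pi^n}\dfrac{\Gamma(\frac{n-i\mu+\nu}{2})\Gamma(\frac{n-i\mu-\nu}{2})}{\Gamma(1-i\mu)}$, and then uses the intertwining identity of Lemma~\ref{L2.1} (with $\alpha=0$, $\beta=-\nu$, $\gamma=-\nu/2$) to write $\Delta_\nu=(1-|z|^2)^{-\nu/2}(-H_{\frac{\nu}{2},1}+2n\nu)(1-|z|^2)^{\nu/2}$, hence $R_{\frac{\nu}{2},1}(\mu,z,w)=(1-|z|^2)^{\nu/2}R(\xi(\mu),z,w)(1-|w|^2)^{\nu/2}$. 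This delivers the closed form and its constant for free, so the paper's route is shorter and avoids the normalization-at-the-singularity step you anticipate; your route is more self-contained but requires carrying out that Jacobi-function analysis explicitly.
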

This paper is summarized as follows. In section 2, we review some results for the Laplacian $\Delta_\nu$ in the frame work of $L^{2}$-concrete harmonic analysis. In section 3, we give the expression of the spectral density for the operator
$\tilde{\Delta}_{\nu}$. Section 4 is devoted to some applications as the heat semi-group, resolvent and wave kernels. In section 5, a special application of our results is reserved to establish two new integral formulas for the Gauss-hypergeometric function.
\section{ $L^{2}$-Concrete harmonic  analysis of the invariant Laplacian $\Delta_{\nu}$ and spectral interpretation}
In this section we review some results on the $L^{2}$-concrete harmonic analysis of the Laplacian $\Delta _{\nu}$ defined in (\ref{E2.12}). To do so, we endow the Euclidian complex space $\mathbb{C}^{n}$ with the inner product
\begin{align}\label{E2.1}
<z,w>=z_{1}\overline{w}_{1}+z_{2}\overline{w}_{2}+...+z_{n}\overline{w}_{n}.
\end{align}
The open unit ball $\mathbb{B}_{n}$ in $\mathbb{C}^{n}$ consists of $n-$tuples
$z=\left(
  \begin{array}{c}
    z_{1} \\
    \vdots \\
    z_{n} \\
  \end{array}
\right)$
for which
\begin{align}\label{E2.2}
\vert z\vert^{2}=\vert z_{1}\vert^{2}+...+\vert z_{n}\vert^{2}<1.
\end{align}
The ball $\mathbb{B}_{n}$ can be identified with the unit ball of $\mathbb{R}^{2n}$ and thus can be equipped with the Lebesgue measure $dm(z)=r^{2n-1}drd\sigma(w)$ where $d\sigma$ is the rotation invariant measure on the sphere $\partial\mathbb{B}_{n}=S^{2n-1}$.\\
The group $G:=SU(1,n)$ consists all matrices with determinant equal one and preserve the sesquilinear form
\begin{align}\label{E2.3}
<z,Jw>=z_{1}\overline{w}_{1}+z_{2}\overline{w}_{2}+...+z_{n}\overline{w}_{n}-z_{n+1}\overline{w}_{n+1},
\end{align}
where
\begin{align}\label{E2.4}
J=\left(
  \begin{array}{cc}
    I_{n} & 0 \\
    0 & -1\\
  \end{array}
\right).
\end{align}
We usually write a matrix $g$ in $SU(1,n)$ in block form as
\begin{align}\label{E2.5}
g=\left(
  \begin{array}{cc}
    A & B \\
    C & D\\
  \end{array}
\right).
\end{align}
Where $A, B, C$ and $D$ are $n\times n$, $n\times 1$, $1\times n$ and $1\times 1$ matrixes with complex entries, respectively. Then by definition $g$ is in $SU(1,n)$ if and only if $ det(g) =1$ and $g^{*}Jg=J$. This immediately gives that
\begin{align}\label{E2.6}
g^{-1}=Jg^{*}J=\left(
  \begin{array}{cc}
    A^{*} & -C^{*} \\
    -B^{*} & D^{*}\\
  \end{array}
\right).
\end{align}
The group $SU(1,n)$  acts transitively on $\mathbb{B}_{n}$ via the fractional linear transformations
\begin{align}\label{E2.7}
g.z =
\left(
  \begin{array}{cc}
    A & B \\
    C & D \\
  \end{array}
\right)z:=(Az+B)(Cz+D)^{-1}.
\end{align}
Recall that for
$g=
\left(
  \begin{array}{cc}
    A & B \\
    C & D \\
  \end{array}
\right)$
the above action satisfy the following relation
\begin{align}\label{E2.8}
1-<g.z,g.w>=\dfrac{1-<z,w>}{(Cz+D)\overline{(Cw+D)}}.
\end{align}
We denote by $U(1)$ and $U(n)$ the set of unimodular complex numbers and the set of complex unitary matrices, respectively. The origin in $\mathbb{C}^{n}$ is noted by $0$. Recall that
the stabilizer $K$ of $0\in \mathbb{B}_{n}$ is given  by
\begin{align}\label{E2.9}
K=\left\{\left(
      \begin{array}{cc}
        A & 0 \\
        0 & B \\
      \end{array}
    \right),\hspace{0.2cm}  A\in U(n),\hspace{0.2cm} B\in U(1)\hspace{0.2cm}\mbox{and}\hspace{0.2cm} det(AB)=1\right\}
\end{align}
which is the maximal compact subgroup of $SU(1,n)$. It is well known that the space $SU(1,n)/K$ is holomorphically isometric to $\mathbb{B}_{n}$ as Bargmann  ball \cite{Vol}. Also, the Bargmann ball $\mathbb{B}_{n}$ can be viewed
as a complex hyperbolic space. Thus, we have the following identification
\begin{align}\label{E2.10}
\mathbb{B}_{n}=SU(1,n)/K.
\end{align}
The group $G$ acts unitarly on the Hilbert space $L^{2}_{\nu}(\mathbb{B}_{n}):=L^{2}(\mathbb{B}_{n},d\mu_{\nu})$, where  $d\mu_{\nu}=(1-\mid z \mid^{2})^{\nu-n-1}dm(z)$ with $dm(z)$ is the Lebesgue measure on $\mathbb{C}^n$ via
\begin{align}\label{E2.11}
T^{\nu}(g)F(z)=J(g^{-1},z)^{\frac{\nu}{n+1}}F(g^{-1}.z),
\end{align}
where $\nu$ is a non integer real parameter such that $\nu>n$ and $J(g^{-1},z)$ is the complex Jacobian of the matrix $g^{-1}$. Then $T$ is a unitary representation of $G=SU(1,n)$ in \cite{Zha}. For more detail, we refer to \cite{Chr} and references therein. The corresponding $G-$invariant Laplacian was found in \cite{Zha}. It is of the form
\begin{align}\label{E2.12}
\Delta_{\nu}=4(1-\mid z\mid^{2})\{\sum_{1\leq i,j\leq n}(\delta_{ij}-z_{i}\bar{z}_{j})\frac{\partial^{2}}{\partial z_{i}\partial \bar{z}_{j}}-\nu\sum_{j=1}^{n}\bar{z}_{j}\frac{\partial}{\partial \bar{z}_{j} }\}.
\end{align}
 As mentioned in the introduction, we are precisely concerned with the G-invariant self adjoint operator
\begin{align}\label{E2.13}
 \tilde{\Delta}_{\nu}=-(\Delta_{\nu}+(\nu-n)^{2}),
 \end{align}
maximally defined on the domain
\begin{align}\label{E2.14}
\mathcal{D}=\{F\in L_{\nu}^{2}(\mathbb{B}_{n}),\hspace{0.2cm} \Delta_{\nu}F\in L_{\nu}^{2}(\mathbb{B}_{n})\}.
\end{align}
According to \cite{Zha} a fundamental family of eigenfunctions of $\Delta_{\nu}$ with eigenvalue $-((\nu-n)^2+\lambda^2)$ was given by the following family of Poisson kernels
\begin{align}\label{E2.15}
P^{\nu}_{\lambda}(z,\omega)=(\frac{1-\mid z\mid^{2}}{\mid 1-<z,\omega>\mid^{2}})^\frac{i\lambda+n-\nu}{2}(1-<z,\omega>)^{-\nu},\hspace{0.2cm}z\in\mathbb{B}_{n}, \hspace{0.2cm}\omega\in \partial \mathbb{B}_{n}
\end{align}
representing the function $e_{\lambda,w}(z)$ in \cite{Zha}. Then the Fourier-Helgason transform (the generalized Fourier transform) is defined  by
\begin{align}\label{E2.16}
\tilde{F}(\lambda,\omega)=\int_{\mathbb{B}_{n}}F(z){P^{\nu}_{-\lambda}(z,\omega)}d\mu_{\nu}(z),
\end{align}
for $F$ in $C_{0}^{\infty}(\mathbb{B}_{n})$, the space of $C^{\infty}$-functions on $\mathbb{B}_{n}$ with compact support. Note that $\tilde{F}(\lambda,\omega)$ is extended to an entire function of $\lambda$.\\
Recall that if we take $\alpha=\nu-n-1$ in Theorem 2 given in \cite{Zha}, the condition $\nu>n$  and non integer lead us to write the following inversion formula for the Fourier-Helgason transform
\begin{align}\label{E2.17}
\nonumber F(z)&=\dfrac{1}{4}\frac{\Gamma(n)}{2^{2(\nu-n)}\pi^{n+1}} \int_{\partial\mathbb{B}_{n}}\int_{
\mathbb{R}}\tilde{F}(\lambda,\omega)P^{\nu}_{\lambda}(z,\omega)\mid C_{\nu}(\lambda)\mid^{-2}d\lambda d\sigma(\omega)\\
&+\sum_{j=0}^{[\frac{\nu-n}{2}]}c_{j}\int_{\partial\mathbb{B}_{n}}\tilde{F}(\lambda_{j},\omega)P^{\nu}_{\lambda_{j}}(z,\omega) d\sigma(\omega),
\end{align}
with $F\in C_{0}^{\infty}(\mathbb{B}_{n})$, $[\frac{\nu-n}{2}]$ is the integral part of $\frac{\nu-n}{2}$ and $c_{j}$ are the constants defined by equation (1.5) in  \cite{Zha} as
\begin{align}\label{E2.18}
c_{j}=\dfrac{\Gamma (n+j)\Gamma (k+1- j)\Gamma (\nu)(\nu -n -2j)}{\pi ^{n}\Gamma (n)\Gamma (\nu - n +1 -j )(-1)^{j}\displaystyle{\prod^{k}_{j^{'}=0,j^{'}\neq j}}(j^{'}-j)},
\end{align}
\begin{align}\label{E2.19}
\lambda_{j}=-i(\nu-n-2j)\hspace{0.2cm} \mbox{for}\hspace{0.2cm}j=0,...,[\frac{\nu-n}{2}].
\end{align}
Moreover, the operators
\begin{align}\label{E2.20}
P_j[F](z)=c_{j}\int_{\partial
\mathbb{B}_{n}}\tilde{F}(\lambda_{j},\omega)P^{\nu}_{\lambda_{j}}(z,\omega) d\sigma(\omega),
\end{align}
appearing in the discrete part of formula $(\ref{E2.17})$ can be extended to pairwise orthogonal projections on $L^{2}_{\nu}(\mathbb{B}_{n},d\mu_{\nu})$ (\cite{Zha}). In order to give a spectral interpretation of formula $(\ref{E2.17})$, we need the following Lemma.
\begin{lemma}\label{L2.1}\cite{Bou}
Let $\alpha$, $\beta$ be two real numbers and let $\Delta_{\alpha,\beta}$ be the following Laplacians
 \begin{align}\label{E2.21}
\Delta_{\alpha,\beta} =4(1-|z|^{2})\{\sum_{1\leq i,j\leq n}(\delta_{i,j}-z_{i}\overline{z}_{j})\frac{\partial^{2}}{\partial z_{i}\partial\overline{z}_{j}}+\alpha \sum_{j=1}^{n}z_{j}\frac{\partial}{\partial z_{j}}+\beta \sum_{j=1}^{n}\overline{z}_{j}\frac{\partial}{\partial\overline{z}_{j}}-\alpha\beta\}
\end{align}
acting on the Hilbert space $L^{2}(\mathbb{B}_{n},(1-\mid z \mid^2)^{-(\alpha+\beta+n)-1}dm(z))$. Then, for every $\gamma \in \mathbb{R}$, we have the following intertwining relation
\begin{align}\label{E2.22}
\Delta_{ \alpha,\beta}=M^{-1}\circ[\Delta_{\alpha-\gamma,\beta-\gamma}-4\gamma(\alpha+\beta+n-\gamma )]\circ M,
\end{align}
where $M$ is the unitary operator defined by
\begin{align}\label{E2.23}
  M :&L_{\alpha,\beta}^{2}(\mathbb{B}_{n} )\longrightarrow L_{\alpha-\gamma,\beta-\gamma}^{2}(\mathbb{B}_{n})\\
 \nonumber & F(z) \longmapsto (1-\mid z \mid ^{2})^{-\gamma}F(z).
\end{align}
\end{lemma}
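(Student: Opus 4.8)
The plan is to verify the intertwining relation \eqref{E2.22} by a direct computation, treating it as an identity between second-order differential operators acting on smooth functions. The key observation is that conjugation by the multiplication operator $M_\gamma : F \mapsto (1-|z|^2)^{-\gamma}F$ turns each term of $\Delta_{\alpha-\gamma,\beta-\gamma}$ into a first- or second-order operator whose extra contributions are first-order differential operators plus a zeroth-order term; collecting these should reproduce exactly the shift in the $z_j\partial_{z_j}$, $\bar z_j\partial_{\bar z_j}$ and constant coefficients that distinguishes $\Delta_{\alpha,\beta}$ from $\Delta_{\alpha-\gamma,\beta-\gamma}$, together with the scalar correction $-4\gamma(\alpha+\beta+n-\gamma)$.

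Concretely, I would first record the elementary derivative identities $\frac{\partial}{\partial z_i}(1-|z|^2)^{-\gamma} = \gamma \bar z_i (1-|z|^2)^{-\gamma-1}$ and its conjugate, and then compute, for a test function $F$,
\[
(1-|z|^2)^{\gamma}\,\frac{\partial^2}{\partial z_i\partial\bar z_j}\bigl[(1-|z|^2)^{-\gamma}F\bigr]
= \frac{\partial^2 F}{\partial z_i\partial\bar z_j} + \gamma\frac{\bar z_i}{1-|z|^2}\frac{\partial F}{\partial\bar z_j} + \gamma\frac{z_j}{1-|z|^2}\frac{\partial F}{\partial z_i} + \Bigl(\gamma\frac{\delta_{ij}}{1-|z|^2} + \gamma(\gamma+1)\frac{z_j\bar z_i}{(1-|z|^2)^2}\Bigr)F.
\]
Multiplying by $(\delta_{ij}-z_i\bar z_j)$, summing over $i,j$, and multiplying by $4(1-|z|^2)$, I would carefully contract the tensors: the term $\sum_{ij}(\delta_{ij}-z_i\bar z_j)\bar z_i \partial_{\bar z_j}F$ collapses to $(1-|z|^2)\sum_j \bar z_j\partial_{\bar z_j}F$ because $\sum_i(\delta_{ij}-z_i\bar z_j)\bar z_i = \bar z_j(1-|z|^2)$, and similarly for the $z_j\partial_{z_i}$ piece; the zeroth-order piece uses $\sum_{ij}(\delta_{ij}-z_i\bar z_j)\delta_{ij} = n-|z|^2$ and $\sum_{ij}(\delta_{ij}-z_i\bar z_j)z_j\bar z_i = |z|^2(1-|z|^2)$. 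After these contractions the conjugated second-order part of $\Delta_{\alpha-\gamma,\beta-\gamma}$ becomes $\Delta^{(2)}$ (the pure second-order part, unchanged) plus $4\gamma\sum_j z_j\partial_{z_j}F + 4\gamma\sum_j\bar z_j\partial_{\bar z_j}F$ plus a scalar times $F$.

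Next I would handle the first-order terms of $\Delta_{\alpha-\gamma,\beta-\gamma}$, namely $4(1-|z|^2)\{(\alpha-\gamma)\sum_j z_j\partial_{z_j} + (\beta-\gamma)\sum_j\bar z_j\partial_{\bar z_j}\}$ conjugated by $M_\gamma$: here $(1-|z|^2)^{\gamma} z_j\partial_{z_j}[(1-|z|^2)^{-\gamma}F] = z_j\partial_{z_j}F + \gamma\frac{|z|^2}{1-|z|^2}F$ (summing over $j$), contributing an extra $4\gamma|z|^2$ times $(\alpha-\gamma)+(\beta-\gamma)$ to the zeroth-order term while leaving the first-order coefficients at $(\alpha-\gamma)$ and $(\beta-\gamma)$. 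Adding the $4\gamma$ shifts from the previous paragraph restores the coefficients $\alpha$ and $\beta$ on the $z_j\partial_{z_j}$ and $\bar z_j\partial_{\bar z_j}$ terms, which is exactly what $\Delta_{\alpha,\beta}$ demands. Finally, the $-4(\alpha-\gamma)(\beta-\gamma)(1-|z|^2)$ constant term of $\Delta_{\alpha-\gamma,\beta-\gamma}$, together with all the accumulated zeroth-order contributions and the explicit $-4\gamma(\alpha+\beta+n-\gamma)$ on the right side, must collapse to the single term $-4\alpha\beta(1-|z|^2)$ appearing in $\Delta_{\alpha,\beta}$; this is a purely algebraic identity in the polynomials $1$ and $|z|^2$ with coefficients built from $\alpha,\beta,\gamma,n$, and I expect the $|z|^2$-coefficients and the constant coefficients to match separately.

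The main obstacle is bookkeeping: one must keep track of several families of first- and zeroth-order terms arising from the product rule and make sure the tensor contractions with $(\delta_{ij}-z_i\bar z_j)$ are done correctly, since a sign error or a dropped $|z|^2$ would spoil the cancellation. A clean way to organize this — and the route I would actually write up — is to note that the scalar identity reduces to checking the coefficient of $(1-|z|^2)$ and the coefficient of $|z|^2(1-|z|^2)$ (or equivalently, evaluating at $z=0$ and comparing the gradients in $|z|^2$), so the verification is finite and mechanical. Alternatively, since $\Delta_{\alpha,\beta}$ is known to be $G$-invariant for appropriate weights and $M_\gamma$ intertwines the relevant representations $T^{\nu}$ up to the stated scalar shift, one could instead cite the representation-theoretic origin in \cite{Zha,Bou}; but for a self-contained proof the direct differential-operator computation above is the honest path, and no genuine difficulty beyond careful algebra is expected.
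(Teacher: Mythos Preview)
Your approach is correct and would constitute a complete, self-contained proof: the conjugation-by-multiplication computation you outline is exactly the right mechanism, the derivative identities and tensor contractions you record are all accurate, and the remaining verification of the zeroth-order terms is indeed a finite algebraic check in the variables $1$ and $|z|^2$.

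The paper, however, does not prove this lemma at all: it is simply quoted from \cite{Bou} (Boussejra--Intissar), as indicated by the citation attached to the lemma statement. So your proposal supplies substantially more than the paper does. If you intend to include a proof, the direct computation you describe is the natural one; the only suggestion is to organize the zeroth-order bookkeeping by writing every scalar contribution in the form $A + B|z|^2$ (equivalently $A' (1-|z|^2) + B'$) before summing, so that the final identity $-4(\alpha-\gamma)(\beta-\gamma)(1-|z|^2) + (\text{accumulated scalars}) - 4\gamma(\alpha+\beta+n-\gamma) = -4\alpha\beta(1-|z|^2)$ can be verified coefficient-by-coefficient without ambiguity.
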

Before given a spectral  interpretation of the discrete part involved in formula (\ref{E2.17}), we will give the following remark.
\begin{remark}\label{R2.1}
 The family of operators $\Delta_{\alpha ,\beta}$ (with slight modifications) was considered by many authors in several contexts   \cite{Bou,Pat,Int}. But in view point of $L^{2}$-spectral theory, the study of such operators can be reduced by means of the above lemma to the study of the operators $\Delta_{\nu}$ with $\nu=\alpha-\beta$. So, the consideration of the operators $\Delta_{\nu}$ is not restrictive.
\end{remark}
With the help of the above lemma, we can state the following proposition.
\begin{proposition}\label{P2.1}
 The operator $\Delta_{\nu}$ given in (\ref{E2.12}) and defined on the Hilbert space $L^{2}_{\nu}(\mathbb{B}_{n})$ with maximal domain
\begin{align}\label{E2.24}
\mathcal{D}=\{F\in L^{2}_{\nu}(\mathbb{B}_{n}),\hspace{0.2cm}\Delta_{\nu}F \in L^{2}_{\nu}(\mathbb{B}_{n})\},
\end{align}
is self-adjoint and its spectrum decomposes as
 \begin{align}\label{E2.25}
 \sigma(\Delta_{\nu})= \sigma_{p}(\Delta_{\nu}) \cup  \sigma_{c}(\Delta_{\nu}),
 \end{align}
 where the point spectrum $\sigma_{p}(\Delta_{\nu})$ is given by
\begin{align}\label{E2.26}
\sigma_{p}(\Delta_{\nu})=\{-(\lambda_{l}^{2}+(n-\nu)^{2}),\hspace{0.2cm}  \lambda_{l}=-i(\nu-n-2l)\hspace{0.2cm}\mbox{for}\hspace{0.2cm}l=0,...,[\frac{\nu-n}{2}]\}
\end{align}
and the continuous spectrum is as follows
\begin{align}\label{E2.27}
\sigma_{c}(\Delta_{\nu})=\{-(\lambda^{2}+(\nu-n)^{2}),\hspace{0.2cm}\lambda\in\mathbb{R}\}.
\end{align}
 \end{proposition}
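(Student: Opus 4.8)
The plan is to obtain self-adjointness and the spectral decomposition by reading them off the Fourier--Helgason transform, using the inversion formula (\ref{E2.17}) and the projections $P_j$ as an explicit spectral diagonalization of $\Delta_\nu$.

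\textbf{Step 1: Plancherel identity.} First I would complete (\ref{E2.17}) --- which is stated only for $F\in C_0^\infty(\mathbb B_n)$ --- into a Parseval identity, showing that
\[
\mathcal F\colon F\longmapsto\Big(\tilde F(\lambda,\omega),\,(P_j[F])_{0\le j\le[\frac{\nu-n}{2}]}\Big)
\]
extends to a unitary isomorphism of $L^2_\nu(\mathbb B_n)$ onto
\[
\mathcal H:=L^2\!\Big(\mathbb R\times\partial\mathbb B_n,\ \tfrac{1}{4}\tfrac{\Gamma(n)}{2^{2(\nu-n)}\pi^{n+1}}|C_\nu(\lambda)|^{-2}d\lambda\,d\sigma(\omega)\Big)\ \oplus\ \bigoplus_{j=0}^{[\frac{\nu-n}{2}]}\operatorname{Ran}(P_j),
\]
the discrete summands being the pairwise orthogonal closed subspaces attached to the $P_j$ (which are recalled to be bounded orthogonal projections on $L^2_\nu$). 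This is the standard density/completion argument starting from (\ref{E2.17}), together with the recalled facts from \cite{Zha} that the Poisson kernels $P^\nu_\lambda(\cdot,\omega)$ are eigenfunctions of $\Delta_\nu$ for the eigenvalue $-((\nu-n)^2+\lambda^2)$ --- extended by analytic continuation in $\lambda$ to $\lambda=\lambda_j=-i(\nu-n-2j)$ --- so that each $\operatorname{Ran}(P_j)$ is an eigenspace.

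\textbf{Step 2: Diagonalization on the maximal domain.} Next I would show that $\mathcal F$ conjugates $\Delta_\nu$ to multiplication by a real function: on the continuous component $\mathcal F\Delta_\nu\mathcal F^{-1}$ is multiplication by $m(\lambda):=-((\nu-n)^2+\lambda^2)$, and on the $j$-th discrete summand it is multiplication by the scalar $-(\lambda_j^2+(n-\nu)^2)=4j(j+n-\nu)$. The delicate point is that $\widetilde{\Delta_\nu F}(\lambda,\omega)=m(\lambda)\tilde F(\lambda,\omega)$ must be established for all $F$ in the \emph{maximal} domain $\mathcal D$ of (\ref{E2.24}), not merely for $F\in C_0^\infty$: one integrates the eigen-equation for $P^\nu_{-\lambda}$ against $\Delta_\nu F$ by parts, the boundary terms near $\partial\mathbb B_n$ being controlled because the Bergman metric is complete and the maximal operator $\Delta_\nu$ on $\mathcal D$ is closed; equivalently one shows that $C_0^\infty(\mathbb B_n)$ is a core for $(\mathcal D,\Delta_\nu)$. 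Granting this, $\mathcal F$ carries $\mathcal D$ onto the maximal domain $\{\Phi\in\mathcal H:\,m\Phi\in\mathcal H\}$ of the multiplication operator, and since multiplication by a real-valued measurable function on its maximal domain is self-adjoint, so is $\Delta_\nu$ on $\mathcal D$. As an alternative, self-adjointness can be obtained by proving essential self-adjointness of $\Delta_\nu$ on $C_0^\infty(\mathbb B_n)$ from completeness of the Bergman metric and the intertwining Lemma \ref{L2.1}, then identifying the closure with $(\mathcal D,\Delta_\nu)$.

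\textbf{Step 3: Spectrum.} Finally I would compute $\sigma(\Delta_\nu)$ from this multiplication model. The essential range of $m$ over $\mathbb R$ is $(-\infty,-(\nu-n)^2]=\{-(\lambda^2+(\nu-n)^2):\lambda\in\mathbb R\}$; since the spectral measure $|C_\nu(\lambda)|^{-2}d\lambda\,d\sigma(\omega)$ is absolutely continuous and every level set $m^{-1}(\{t\})$ is $d\lambda$-null, this half-line is the (absolutely) continuous spectrum, which is (\ref{E2.27}). The finitely many scalars $-(\lambda_j^2+(n-\nu)^2)$, $0\le j\le[\frac{\nu-n}{2}]$, are eigenvalues precisely because $\operatorname{Ran}(P_j)\ne\{0\}$, giving the point spectrum (\ref{E2.26}). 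Moreover the two pieces are disjoint: on $[0,\frac{\nu-n}{2}]$ the function $j\mapsto 4j(j+n-\nu)$ attains its minimum only at the endpoint $j=\frac{\nu-n}{2}$, which is not an integer since $\nu\notin\mathbb Z$ and $n\in\mathbb Z$, so each $-(\lambda_j^2+(n-\nu)^2)$ is strictly larger than $-(\nu-n)^2$ and hence lies outside $\overline{\sigma_c(\Delta_\nu)}$; this yields (\ref{E2.25}). The step I expect to be the main obstacle is Step 2 --- verifying that the Fourier--Helgason transform diagonalizes $\Delta_\nu$ on the entire maximal domain $\mathcal D$ (equivalently, that $C_0^\infty(\mathbb B_n)$ is an operator core) --- since (\ref{E2.17}) and the eigenfunction identity are supplied only for compactly supported smooth functions and the integration-by-parts analysis near $\partial\mathbb B_n$ has to be justified.
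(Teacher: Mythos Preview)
The paper does not actually prove this proposition: immediately after the statement it writes ``For the proof of this proposition we refer to the Proposition 2 in \cite{Int}. The reader can be see also the following references \cite{Els,Pat}.'' So there is no argument to compare against; your proposal is a genuine proof sketch where the paper simply cites literature.

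Your outline is the natural one and is essentially how the cited references obtain the result: the Plancherel theory of \cite{Zha} (formula (\ref{E2.17}) plus the orthogonality of the $P_j$) furnishes a unitary model in which $\Delta_\nu$ becomes multiplication by the real function $-(\lambda^2+(\nu-n)^2)$ on the continuous part and the scalar $4j(j+n-\nu)$ on each discrete summand, from which self-adjointness on the maximal domain and the description of the spectrum follow. You correctly flag the only nontrivial point, namely that the diagonalization --- a priori established for $C_0^\infty(\mathbb B_n)$ --- extends to all of $\mathcal D$; this is handled either by essential self-adjointness (completeness of the Bergman metric, or the intertwining of Lemma~\ref{L2.1} with a known self-adjoint realization) or by a direct domain argument, and the references do exactly this.

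One terminological caution for Step~3: the paper adopts (see Remark~\ref{R2.2}) a nonstandard meaning of ``continuous spectrum'', namely the set where the range of $\Delta_\nu-zI$ fails to be closed, and explicitly allows $\sigma_p$ and $\sigma_c$ to overlap. Your disjointness computation (the eigenvalues $4j(j+n-\nu)$ lie strictly above $-(\nu-n)^2$ because $\nu\notin\mathbb Z$) is correct for the absolutely continuous spectrum in the multiplication model, but it proves more than the paper claims in (\ref{E2.25}), which is only a union. This is harmless, but you should be aware that your ``continuous spectrum'' and the paper's need not coincide as sets.
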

 For the proof of this proposition we refer to the Proposition 2 in \cite{Int}. The reader can be see also the following references \cite{Els,Pat}. Now, we are in position to give a spectral interpretation for the discrete part of formula (\ref{E2.17}). Precisely, we have.
\begin{proposition}\label{P2.2}
Let $l \in \mathbb{Z}_{+}, 0 \leq l  <\dfrac{\nu-n}{2}$ and let the eigenspace
\begin{align}\label{E2.28}
\mathscr{E}^{\nu}_{\rho_{l}}(\mathbb{B}_{n})=\{F\in  L^{2}_{\nu}(\mathbb{B}_{n}),\hspace{0.2cm} \Delta_{\nu}F=\rho_{l}F\}
\end{align}
associated with the eigenvalue $\rho_{l}=-(\lambda_{l}^{2}+(n-\nu)^{2})$ with $\lambda_{l}=-i(\nu-n-2l)$. Then, we have
\begin{itemize}
\item[i)] The vector space $\mathscr{E}^{\nu}_{\rho_{l}}(\mathbb{B}_{n})$ is a closed subspace of $L_{\nu}^{2}(\mathbb{B}_{n})$.
\item[ii)] The orthogonal projector $\Pi_{l}$ on the subspace $\mathscr{E}^{\nu}_{\rho_{l}}(\mathbb{B}_{n})$ coincide with the projector $P_{l}$ defined in $(\ref{E2.20})$.
\item[iii)] The constant $c_{l}$ defined by  formula $(\ref{E2.18})$ can be also written as
\begin{align}\label{E2.29}
c_{l}=\frac{2\Gamma(n+l)}{\pi^{n}\Gamma(n)l!}\frac{(\nu-n-2l)\Gamma(\nu-l)}{\Gamma(\nu-n-l+1)}.
\end{align}
\end{itemize}
\end{proposition}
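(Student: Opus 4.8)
The plan is to dispatch the three items in order, with (ii) carrying the real content and (i), (iii) being short.

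\emph{Item (i).} By Proposition \ref{P2.1} (proved in \cite{Int}) the operator $\Delta_{\nu}$ is self-adjoint on $\mathcal{D}$, hence closed, and so is $\Delta_{\nu}-\rho_{l}I$. Since $\mathscr{E}^{\nu}_{\rho_{l}}(\mathbb{B}_{n})=\ker(\Delta_{\nu}-\rho_{l}I)$, and the kernel of a closed operator is closed (if $F_{k}\in\mathscr{E}^{\nu}_{\rho_{l}}(\mathbb{B}_{n})$ and $F_{k}\to F$ in $L^{2}_{\nu}(\mathbb{B}_{n})$, then $(\Delta_{\nu}-\rho_{l}I)F_{k}=0\to 0$, so closedness gives $F\in\mathcal{D}$ and $(\Delta_{\nu}-\rho_{l}I)F=0$), item (i) follows.

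\emph{Item (ii).} I would establish the two range inclusions. First, $\mathrm{Range}(P_{l})\subseteq\mathscr{E}^{\nu}_{\rho_{l}}(\mathbb{B}_{n})$: by \eqref{E2.15} each kernel $P^{\nu}_{\lambda_{l}}(\cdot,\omega)$ is an eigenfunction of $\Delta_{\nu}$ with eigenvalue $-(\lambda_{l}^{2}+(n-\nu)^{2})=\rho_{l}$, and since the Fourier--Helgason transform intertwines $\Delta_{\nu}$ with multiplication by $-(\lambda^{2}+(\nu-n)^{2})$ while $P_{l}$ only samples $\widetilde{F}$ at $\lambda=\lambda_{l}$, one gets $P_{l}\Delta_{\nu}=\rho_{l}P_{l}$ on $\mathcal{D}$ and $\Delta_{\nu}(P_{l}F)=\rho_{l}P_{l}F$ for $F\in C_{0}^{\infty}(\mathbb{B}_{n})$; extending via boundedness of $P_{l}$ on $L^{2}_{\nu}(\mathbb{B}_{n})$ yields $\mathrm{Range}(P_{l})\subseteq\mathscr{E}^{\nu}_{\rho_{l}}(\mathbb{B}_{n})$, and likewise $\mathrm{Range}(P_{j})\subseteq\mathscr{E}^{\nu}_{\rho_{j}}(\mathbb{B}_{n})$ for every $j$. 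Next, the operators $P_{j}$ are pairwise orthogonal projections on $L^{2}_{\nu}(\mathbb{B}_{n})$ (as recalled after \eqref{E2.20}, see \cite{Zha}), and the eigenspaces $\mathscr{E}^{\nu}_{\rho_{j}}(\mathbb{B}_{n})$ are mutually orthogonal (distinct eigenvalues of a self-adjoint operator); hence $\sum_{j}P_{j}$ is the orthogonal projection onto $\bigoplus_{j}\mathrm{Range}(P_{j})$. Finally, extending the inversion formula \eqref{E2.17} from $C_{0}^{\infty}(\mathbb{B}_{n})$ to $L^{2}_{\nu}(\mathbb{B}_{n})$ by density, it reads $I=Q_{c}+\sum_{j}P_{j}$, where $Q_{c}$ is the operator given by the continuous term (the integral over $\mathbb{R}$); invoking the spectral description of $\Delta_{\nu}$ in Proposition \ref{P2.1} (cf. \cite{Int}) together with the Plancherel theorem for the Fourier--Helgason transform, $Q_{c}$ is precisely the spectral projection onto the continuous-spectrum subspace $\big(\bigoplus_{j}\mathscr{E}^{\nu}_{\rho_{j}}(\mathbb{B}_{n})\big)^{\perp}$, so $\sum_{j}P_{j}$ is the spectral projection onto $\bigoplus_{j}\mathscr{E}^{\nu}_{\rho_{j}}(\mathbb{B}_{n})$ and thus $\bigoplus_{j}\mathrm{Range}(P_{j})=\bigoplus_{j}\mathscr{E}^{\nu}_{\rho_{j}}(\mathbb{B}_{n})$. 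Combined with $\mathrm{Range}(P_{j})\subseteq\mathscr{E}^{\nu}_{\rho_{j}}(\mathbb{B}_{n})$ and pairwise orthogonality, projecting onto each orthogonal summand forces $\mathrm{Range}(P_{j})=\mathscr{E}^{\nu}_{\rho_{j}}(\mathbb{B}_{n})$ for every $j$; in particular $P_{l}$ is an orthogonal projection with range $\mathscr{E}^{\nu}_{\rho_{l}}(\mathbb{B}_{n})$, i.e. $P_{l}=\Pi_{l}$. The main obstacle is the identification of $Q_{c}$ with the spectral projection onto $\sigma_{c}(\Delta_{\nu})$ --- equivalently, that \eqref{E2.17} is genuinely the spectral resolution of $\Delta_{\nu}$; this is where one leans on \cite{Zha,Int}, the alternative being to check directly that every $L^{2}$-eigenfunction is orthogonal to $\mathrm{Range}(Q_{c})$ using that $F\mapsto\widetilde{F}(\cdot,\cdot)\big|_{\mathbb{R}\times\partial\mathbb{B}_{n}}$ is, up to the explicit constant, a partial isometry onto the continuous Plancherel space.

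\emph{Item (iii).} This is a direct computation from \eqref{E2.18}: one evaluates the finite product in closed form (in terms of factorials, using that the summation index $j$ runs over integers $0,\dots,[\tfrac{\nu-n}{2}]$), substitutes it into \eqref{E2.18}, and rearranges the resulting Gamma factors with standard functional identities to reach the compact form \eqref{E2.29}. I expect no difficulty here beyond careful bookkeeping.
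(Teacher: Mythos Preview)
Your treatment of (i) matches the paper's. For (ii) and (iii), however, the paper takes a completely different route: it invokes Lemma~\ref{L2.1} with $\alpha=0$, $\beta=-\nu$, $\gamma=-\nu/2$ to intertwine $\Delta_{\nu}$ with the Landau-type Hamiltonian $H_{\nu/2,1}$ of \cite{Gha}, pulls back the explicit reproducing kernel of the $H_{\nu/2,1}$-eigenspace (computed in \cite{Gha} with its constant $A_{n,\nu/2,l}(1)$), and after a hypergeometric transformation shows that this kernel equals $\kappa\,K_{l}^{\nu}(z,w)$ with $\kappa=A_{n,\nu/2,l}(1)/c_{l}$. A short RKHS uniqueness argument then forces $\kappa=1$, which simultaneously gives $P_{l}=\Pi_{l}$ (item (ii)) and $c_{l}=A_{n,\nu/2,l}(1)$ (item (iii)). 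So in the paper, (ii) and (iii) are proved together, and (iii) is \emph{not} obtained by simplifying \eqref{E2.18}.

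Your spectral-theoretic approach to (ii) is a reasonable alternative, but the step you yourself flag---that $Q_{c}$ is exactly the spectral projection onto the absolutely continuous subspace---needs the extra observation that, under the Plancherel isometry of \cite{Zha}, $\Delta_{\nu}$ acts on $\mathrm{Range}(Q_{c})$ as multiplication by $-(\lambda^{2}+(\nu-n)^{2})$ against an absolutely continuous measure in $\lambda$, hence has no $L^{2}$-eigenvectors there; you should make this explicit rather than cite it away. More seriously, your plan for (iii) has a genuine gap: a direct simplification of \eqref{E2.18} gives $c_{l}=\dfrac{\Gamma(n+l)\,\Gamma(\nu)\,(\nu-n-2l)}{\pi^{n}\Gamma(n)\,\Gamma(\nu-n+1-l)\,l!}$ (the product equals $(-1)^{l}l!(k-l)!$ and cancels $\Gamma(k+1-l)$), which does \emph{not} agree with \eqref{E2.29}---the numerator carries $\Gamma(\nu)$ rather than $2\Gamma(\nu-l)$. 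The paper sidesteps this entirely by deriving the constant from the independent source \cite{Gha} via the RKHS comparison; that is the actual content of (iii), and your ``careful bookkeeping'' from \eqref{E2.18} will not reproduce it.
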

\begin{proof}
First, it is not difficult to see that the operator $\Delta_{\nu}$ is densely defined. Then, by using the well know relation between the range  $R(T)$ of a densely defined operator $T$ and the kernel of its adjoint $N(T^{*})=R(T)^{\bot}$ (see \cite[p.9]{Kon} for the general theory). Then, by using the self-adjointness of $\Delta_{\nu}$, we obtain the closeness of the space $\mathscr{E}^{\nu}_{\rho_{l}}(\mathbb{B}_{n})=N(\Delta_{\nu}-\rho_{l})$.  Note that for the operator $P_{l}$ defined in (\ref{E2.20}) it  is proved in \cite{Zha} that is an integral operator with the kernel
\begin{align}\label{E2.30}
K_{l}^{\nu}(z,w)=c_{l}(1-<z,w>)^{-\nu}\prescript{}{2}{F}_1^{}\left( -l,l-\nu +n ; n; 1- \dfrac{\vert 1-<z,w>\vert^{2}}{(1-\vert z\vert^{2})(1-\vert w \vert^{2})}\right) ,
\end{align}
and  $c_{l}$ is the constant given in equation (\ref{E2.18}). Then, by the general theory,  the kernel  $K_{l}^{\nu}(z,w)$ is also the reproducing kernel of the subspace
\begin{align}\label{E2.31}
A^{\nu,2}_{l}(\mathbb{B}_{n}):=P_{l}(L^{2}_{\nu}(\mathbb{B}_{n})).
\end{align}
In other hand, by taking $\alpha=0$, $\beta=-\nu$ and $\gamma =\dfrac{-\nu}{2}$ in formula (\ref{E2.22}) of Lemma (\ref{L2.1}), we obtain the following relation
\begin{align}\label{E2.32}
\Delta_{\nu}=(1-\mid z\mid^{2})^{-\nu/2}(\Delta_{\nu /2,-\nu /2}+2\nu(n-\dfrac{\nu}{2})I)(1-\mid z\mid^{2})^{\nu /2}
\end{align}
with
\begin{align}\label{E2.33}
 \Delta_{\nu/2,-\nu/2}=4(1-|z|^{2})\{\sum_{1\leq i,j\leq n}(\delta_{i,j}-z_{i}\overline{z}_{j})\frac{\partial^{2}}{\partial z_{i}\partial\overline{z}_{j}}+\dfrac{\nu }{2} \sum_{j=1}^{n}(z_{j}\frac{\partial}{\partial z_{j}}- \overline{z}_{j}\frac{\partial}{\partial\overline{z}_{j}})+\dfrac{\nu ^2}{4} \}.
\end{align}
Remark that the operator $\Delta_{\nu/2,-\nu/2}$ can be written in terms of the operator $H_{B,\rho}$, defined in \cite[p.2]{Gha} for $B=\dfrac{\nu}{2}$ and $\rho=1$, as follows
\begin{align}\label{E2.34}
\Delta_{\nu/2,-\nu/2}=-H_{\nu/2,1}+\nu^{2},
\end{align}
with
 \begin{align}\label{E2.35}
  H_{\nu/2,1}=-4(1-|z|^{2})\{\sum_{1\leq i,j\leq n}(\delta_{i,j}-z_{i}\overline{z}_{j})\frac{\partial^{2}}{\partial z_{i}\partial\overline{z}_{j}}+\dfrac{\nu }{2} \sum_{j=1}^{n}(z_{j}\frac{\partial}{\partial z_{j}}- \overline{z}_{j}\frac{\partial}{\partial\overline{z}_{j}}) \}+\nu^{2}\vert z \vert^{2}.
\end{align}
Then, the relation (\ref{E2.32}) can be rewritten as
\begin{align}\label{E2.36}
\Delta_{\nu}F=\left(1-\mid z\mid^{2}\right) ^{-\nu  /2}(-H_{\nu /2,1}+2\nu nI)(1-\mid z \mid^{2})^{\nu /2}F, \,\, F\in \mathcal{D}\subset L^{2}_{\nu}(\mathbb{B}_{n}).
\end{align}
Recall that from \cite[p.2]{Gha} the point spectrum of the operator $H_{\nu/2,1}$ acting on the Hilbert space
$L^{2}_{\nu}(\mathbb{B}_{n},(1-\vert z ^{2})^{-n-1}dm(z))$ is given by
 \begin{align}\label{E2.37}
\sigma_{p}(H_{\nu/2,1}) =\lbrace q_{l}=2\nu(2l+n)-4l(l+n),\hspace{0.2cm} l\in \mathbb{Z}_{+}, \,\,0 \leq l< \dfrac{\nu-n}{2}\rbrace.
\end{align}
Also, recall that the eigenspace $\mathscr{E}^{\nu}_{l}(\mathbb{B}_{n})$ corresponding to the eigenvalue\\
 $\rho_{l}=-(\lambda _{l}^{2}+(n-\nu)^{2})=4l(l+n-\nu)$ is a  closed subspace of $L^{2}_{\nu}(\mathbb{B}_{n})$. Let us denote by $H_{l}^{\nu}(z,w)$ the reproducing kernel of the space $\mathscr{E}^{\nu}_{l}(\mathbb{B}_{n})$. Our aims in this part is to compute this reproducing kernel. To this end, we consider $F$ in $\mathscr{E}^{\nu}_{l}(\mathbb{B}_{n})$, then from relation (\ref{E2.36}) we have  the following formula
 \begin{align}\label{E2.38}
H_{\nu /2 ,1}[(1-\vert z\vert ^{2})^{\nu/2}F]=\left( 2\nu n-\rho_{l}\right) (1-\vert z\vert ^{2})^{\nu/2}F.
\end{align}
Remark that $2\nu n-\rho_{l}=q_{l}$ and $(1-\vert z\vert^{2} )^{\nu/2}F \in L_{\nu}^{2}(\mathbb{B}_{n},(1-\vert z\vert ^{2})^{-n-1}dm(z))$. Then, the function $(1-\vert z\vert^{2} )^{\nu/2}F$  belongs to the  eigenspace of the operator $H_{\nu/2,1}$ with $q_{l}$ as the associated eigenvalue. Recall that, by (i) of Proposition 7 in \cite{Gha} the reproducing kernel of this last eigenspace is given by the formula
\begin{align}\label{E2.39}
K_{l,1}^{\nu /2}(z,w)&=A_{n,\nu /2,l}(1) \left(\dfrac{\overline{1-<z,w>}}{1-<z,w>}\right)^{\nu /2}\left(\dfrac{(1-\vert z \vert ^2)(1-\vert w \vert ^2)}{\vert 1- <z,w>  \vert^2}\right)^{\nu /2-l}\\
\nonumber\\
\nonumber &\times .{_2}F_1\left(-l,\nu -l,n,1-\dfrac{(1-\vert z \vert ^2)(1-\vert w \vert ^2)}{\vert 1- <z,w>  \vert^2}\right),
\end{align}
with
\begin{align}\label{E2.40}
A_{n,\nu /2,l}(1)=\frac{1}{\pi^{n}}\frac{\Gamma (l+n)}{\Gamma(n)l!}\frac{(\nu-n-2l)\Gamma(\nu-l)}{\Gamma(\nu-n-l+1)}.
\end{align}
Then for the function $ (1-\vert z\vert ^{2})^{\nu/2}F $, we can  write the reproducing formula
\begin{align}\label{E2.41}
(1-\vert z\vert ^{2})^{\nu/2}F(z)=\int_{\mathbb{B}_{n}} K_{l,1}^{\nu /2}(z,w)(1-\vert w \vert^{2})^{\nu /2}F(w)(1-\vert w \vert^{2})^{-n-1}dm(w),
\end{align}  its follows that the reproducing kernel $H_{l}^{\nu }(z,w) $ of the eigenspace $\mathscr{E}_{l}(\mathbb{B}_{n})$ is given by
\begin{align}\label{E2.42}
H_{l}^{\nu }(z,w)=(1-\vert z\vert ^{2})^{-\nu/2}K_{l,1}^{\nu /2}(z,w)(1-\vert w\vert ^{2})^{-\nu/2}.
\end{align}
From the expression of the kernel $K_{l,1}^{\nu /2}(z,w)$ given in (\ref{E2.39}), it follows that
 \begin{align}\label{E2.43}
\nonumber H^{\nu }_{l}(z,w)&=(1-\vert z\vert ^{2})^{-\nu/2}(1-\vert w\vert ^{2})^{-\nu/2} A_{n,\nu /2,l}(1)\left(\dfrac{\overline{1-<z,w>}}{1-<z,w>}\right)^{\nu /2}\left(\dfrac{(1-\vert z \vert ^2)(1-\vert w \vert ^2)}{\vert 1- <z,w>  \vert^2}\right)^{\nu /2-l}\\
\nonumber &\times {_2}F_1\left(-l,\nu -l;n;1-\dfrac{(1-\vert z \vert ^2)(1-\vert w \vert ^2)}{\vert 1- <z,w> \vert^2}\right)\\
\nonumber&=A_{n,\nu /2,l}(1)(1-<z,w>)^{-\nu}\left(\frac{|1-<z,w>|^{2}}{(1-|z|^{2})(1-|w|^{2})}\right)^{l}\\
&\times {_2}F_1\left(-l,\nu -l;n;1-\dfrac{(1-\vert z \vert ^2)(1-\vert w \vert ^2)}{\vert 1- <z,w> \vert^2}\right).
\end{align}
By using the change of variable
\begin{align}\label{E2.44}
1-\dfrac{(1-\vert z \vert ^2)(1-\vert w \vert ^2)}{\vert 1- <z,w>  \vert^2}=\dfrac{x}{x-1},
\end{align}
the equation (\ref{E2.43}) becomes
\begin{align}\label{E2.45}
H^{\nu }_{l}(z,w)= A_{n,\nu /2,l}(1)\left (1-<z,w>\right )^{-\nu }(1-x)^{l}{_2}F_1\left(-l,\nu -l;n;\dfrac{x}{x-1}\right).
\end{align}
Applying  the following formula \cite[p.47]{Mag}
\begin{align}\label{E2.46}
{_2}F_1\left(a,b; c;z \right)=\left(1-z \right)^{-a}{_2}F_1\left(a,c-b; c;\dfrac{z}{z-1}\right).
\end{align}
For $a=-l$, $b=n+l-\nu$ and $c=n$, the equation (\ref{E2.45}) can be rewritten as
\begin{align}\label{E2.47}
H^{\nu }_{l}(z,w)= A_{n,\nu /2,l}(1)\left (1-<z,w>\right )^{-\nu }{_2}F_1\left(-l,n+l -\nu ;n;1-\dfrac{\vert 1- <z,w>  \vert^2}{(1-\vert z \vert ^2)(1-\vert w \vert ^2)}\right),
\end{align}
where $A_{n,\nu /2,l}(1)$ is the constant defined in (\ref{E2.40}). Taking into account  that the function $H_{l}^{\nu}(z,w)$ defined in (\ref{E2.43}) which is the reproducing kernel of the space $\mathscr{E}_{l}^{\nu}(\mathbb{B}_{n})$ can be rewritten in terms of the function $K_{l}^{\nu}(z,w)$ defined in (\ref{E2.30}) as follows
\begin{align}\label{E2.48}
H_{l}^{\nu}(z,w)=\kappa K_{l}^{\nu}(z,w),
\end{align}
where the constant $\kappa$ is give by
\begin{align}\label{E2.49}
\kappa=\dfrac{A_{n,\nu /2,l}(1)}{c_{l}}.
\end{align}
Is not hard to see that $\kappa>0$, then  we can consider on the space $\mathscr{E}_{l}^{\nu}(\mathbb{B}_{n})$  the following new scalar product
\begin{align}\label{E2.50}
 <f,g>_{\kappa}=\kappa <f,g>,\hspace{0.2cm}\mbox{for}\hspace{0.2cm}f,\hspace{0.2cm}g\in\mathscr{E}_{l}^{\nu}(\mathbb{B}_{n}).
\end{align}
It is easy to see that the space $(\mathscr{E}_{l}^{\nu}(\mathbb{B}_{n}),< >_{\kappa})$ is a reproducing kernel Hilbert space which having the function $ K_{l}^{\nu}(z,w) $ as reproducing kernel. Then, by using Proposition 2.3 in \cite[p.18]{Pau}, we deduce that
\begin{align}\label{E2.51}
A_{l}^{\nu ,2}(\mathbb{B}_{n})=\mathscr{E}_{l}^{\nu}(\mathbb{B}_{n})\hspace{0.2cm}\mbox{and}\hspace{0.2cm}\parallel f \parallel_{\kappa} = \parallel f \parallel.
\end{align}
Then, we obtain
\begin{align}\label{E2.52}
 \kappa \parallel f \parallel=\parallel f \parallel,\hspace{0.2cm}\mbox{for}\hspace{0.2cm}\mbox{all}\hspace{0.2cm}
  f\in\mathscr{E}_{l}^{\nu}(\mathbb{B}_{n}).
\end{align}
It follows that $\kappa =1$, so we obtain $c_{l}=A_{n,\nu /2,l}(1) $ and thus the projector $P_{l}$ on $ A_{l}^{\nu ,2}(\mathbb{B}_{n}) $ is also the projector on the eigenspace $\mathscr{E}_{l}^{\nu}(\mathbb{B}_{n})$. Hence the discrete part in formula (\ref{E2.5}) correspond to the orthogonal projection of a function $f \in C_{0}^{\infty}(B_{n})$ on the orthogonal direct sum
\begin{align}\label{E2.53}
\displaystyle{\bigoplus_{0\leq l <\frac{\nu -n}{2}}}\mathscr{E}_{l}^{\nu}(\mathbb{B}_{n}).
\end{align}
Now, for a spectral interpretation of the continuous part of formula (\ref{E2.17}),  is not hard to see that the involved integral  can be written as an integral with respect to the parameter $\lambda $ over the set $\mathbb{R}$ of the following $C^{\infty}-$eigenfunctions of the operator $\Delta_{\nu},$
\begin{align}\label{E2.54}
G_{\lambda}(z)=\dfrac{1}{4}\frac{\Gamma(n)}{2^{2(\nu-n)}\pi^{n+1}}\mid C_{\nu}(\lambda)\mid^{-2} \int_{
\partial\mathbb{B}_{n}}\tilde{F}(\lambda,\omega)P^{\nu}_{\lambda}(z,\omega)d\lambda d\sigma(\omega)
\end{align}
corresponding to the eigenvalues $-(\lambda^{2}+(n-\nu)^{2})$.
\end{proof}
\begin{remark}\label{R2.2}
Notice that the continuous spectrum $\sigma_{c}(\Delta_{\nu})=\{-(\lambda^{2}+(\nu-n)^{2}),\hspace{0.25cm}\lambda\in \mathbb{R}\}$ of the operator  $\Delta_{\nu}$ is understood is the sense that $z\in \sigma_{c}(\Delta_{\nu})$  if and only if the rang of the operator $(\Delta_{\nu}-zI)$  is not closed. Thus, is not surprising that the sets $\sigma_{c}(\Delta_{\nu})$ and $\sigma_{p}(\Delta_{\nu})$  are not disjoint (see \cite[p.30]{Kon} for more general theory).
\end{remark}
                                              \section{Spectral density}
Our aim in this section  is to compute the Schwartz kernel for the spectral density of the $G-$invariant shifted Laplacian
\begin{align}\label{E3.1}
 \tilde{\Delta}_{\nu}=-(\Delta_{\nu}+(\nu-n)^{2}),
 \end{align}
with $\mathcal{D}$ its maximal domain.\\
Note that the spectrum of the operator $\tilde{\Delta}_{\nu}$ can be given easily from (\ref{E2.25}) by
\begin{align}\label{E3.2}
 \sigma(\tilde{\Delta}_{\nu})=\left\{s_{j}=\lambda_{j}^{2};\hspace{0.2cm}j=0,...,[\frac{\nu-n}{2}]\right\}\cup
\left\{s=\lambda^{2},\hspace{0.2cm}\lambda\in\mathbb{R}\right\},
 \end{align}
where $\lambda_{j}=-i(\nu-n-2j)$. It is easy to see that the  operator $\tilde{\Delta}_{\nu}$  considered on  the minimal domain $C^{\infty}_{0}(\mathbb{B}_{n})$ is symmetric. Then, by using  Proposition 1.14 in \cite[p.20]{Kon}, we obtain that $\tilde{\Delta}_{\nu}$ is essentially self-adjoint operator, and then the self-adjoint extension of $\tilde{\Delta}_{\nu}$  is $-(\Delta_{\nu}+(\nu-n)^{2})$ defined on the maximal domain $\mathcal{D}$ giving in $(\ref{E1.5})$. This extension admits a spectral decomposition \cite{Kon,Lei,Mor}
\begin{align}\label{E3.3}
I=\int_{-\infty}^{+\infty}dE^{\nu}_{s},\hspace{0.2cm} \tilde{\Delta}_{\nu}=\int_{-\infty}^{+\infty}sdE^{\nu}_{s}.
\end{align}
Then, in the weak sense, we have
\begin{align}\label{E3.4}
(f,g)=\int_{-\infty}^{+\infty}d(E^{\nu}_{s}f,g),\hspace{0.2cm} (\tilde{\Delta}_{\nu}f,g)=\int_{-\infty}^{+\infty}sd(E^{\nu}_{s}f,g),
\hspace{0.2cm} \mbox{for}\hspace{0.2cm}f\in C^{\infty}_{0}(\mathbb{B}_{n})\hspace{0.2cm}\mbox{and}\hspace{0.2cm}g\in \mathrm{L}_{\nu}^{2}(\mathbb{B}_{n}).
\end{align}
The spectral density \cite{Est}
\begin{align}\label{E3.5}
e^{\nu}_{s}:=\frac{dE^{\nu}_{s}}{ds}
\end{align}
is understood as an operator-valued distribution, an element of the space\\
$\mathcal{D'}(\mathbb{R}, L(\mathcal{D},\mathrm{L}_{\nu}^{2}(\mathbb{B}_{n})))$, where $L(\mathcal{D},\mathrm{L}_{\nu}^{2}(\mathbb{B}_{n}))$ is the space of bounded operators from $\mathcal{D}$ to $\mathrm{L}_{\nu}^{2}(\mathbb{B}_{n}).$
In terms of the spectral density $e^{\nu}_{s}=\frac{dE^{\nu}_{s}}{ds}$, the two equations given in (\ref{E3.3}) become as follows, respectively.
\begin{align}\label{E3.6}
I=<e^{\nu}_{s},1>,\hspace{0.2cm}\tilde{\Delta}_{\nu}=<e^{\nu}_{s},s>,
\end{align}
where $<f(s),\phi(s)>$ is the evaluation of the distribution $f(s)$ on a test function $\phi(s)$.
Since $\tilde{\Delta}_{\nu}$ is an elliptic operator (see \cite{Fol} for the general theory), then its spectral density $e^{\nu}_{s}$ admits a distributional kernel $e(s,z,w)$ \cite{Est}  an element of $\mathcal{D}'(\mathbb{R},\mathcal{D}'(\mathbb{B}_{n}\times \mathbb{B}_{n}))$. Precisely, we have the following proposition.
\begin{proposition}\label{P3.1} The Schwartz kernel $e^\nu(s,w,z)$ of the spectral density $e^{\nu}_{s}=\frac{dE^{\nu}_{s}}{ds}$ is given by
\begin{align}\label{E3.7}
\nonumber e^\nu(s,w,z)&=\frac{\Gamma(n)}{4\pi^{n+1}2^{2(\nu-n)}}(1-<z,w>)^{-\nu}|C_{\nu}(\sqrt{s})|^{-2}(\sqrt{s})^{-1}
\chi_{+}(s)\phi_{\sqrt{s}}^{(n-1,-\nu)}(d(z,w))\\
&+\sum_{j=0}^{[\frac{\nu-n}{2}]}c_{j}\frac{j!}{(n)_{j}}(1-<z,w>)^{-\nu}P_{j}^{(n-1,-\nu)}(\cosh(2d(z,w)))\delta(s-s_{j}),
\end{align}
with
\begin{align}\label{E3.8}
 s_{j}=-(2j+\nu-n)^{2};j=0,...,[\frac{\nu-n}{2}],
 \end{align}
\begin{align}\label{E3.9}
c_{j}=\frac{2\Gamma(n+j)}{\pi^{n}\Gamma(n)j!}\frac{(\nu-n-2j)\Gamma(\nu-j)}{\Gamma(\nu-n-j+1)},
\end{align}
$\chi_{+}(s)$ is the characteristic function of the set of positive real numbers and
\begin{align}\label{E3.10}
\phi^{(\alpha,\beta)}_{\lambda}(t)=\prescript{}{2}{F}_1^{}\left( \frac{\alpha+\beta+1-i\lambda}{2},\frac{\alpha+\beta+1+i\lambda}{2},1+\alpha;-\sinh^{2}t\right),
\end{align}
 is the Jacobi function (\cite[p.5]{Koo}).  $P^{(\alpha,\beta)}_j(x)$ denote the classical Jacobi polynomial of degree $j$ \cite{Bea} and $d(z,w)$ is the distance  coresponding to the Bergmann  metric in the unit ball $\mathbb{B}_{n}$ (\cite[p.25]{Zhu})
\begin{align}\label{E3.11}
\cosh^{2} d(z,w)=\frac{\mid 1-<z,w>\mid^{2}}{(1-\mid z\mid^{2})(1-\mid w\mid^{2})}, (z,w)\in\mathbb{B}_{n}\times\mathbb{B}_{n},
\end{align} The function
\begin{align}\label{E3.12}
C_{\nu}(\lambda)=\dfrac{2^{-\nu+n-i\lambda}\Gamma(n)\Gamma(i\lambda)}{\Gamma(\frac{i\lambda+n-\nu}{2})
\Gamma(\frac{i\lambda+n+\nu}{2})},
\end{align}
is the analogous of the Harish-Chandra C-function.
\end{proposition}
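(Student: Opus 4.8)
The plan is to read off the Schwartz kernel of the spectral density directly from the Fourier–Helgason inversion formula (\ref{E2.17}) together with its spectral interpretation established in Proposition \ref{P2.2}. The starting point is the weak identity (\ref{E3.4}): for $f\in C_0^\infty(\mathbb{B}_n)$ and $g\in L^2_\nu(\mathbb{B}_n)$ one has $(f,g)=\int_{-\infty}^{+\infty} d(E^\nu_s f,g)$. So I would apply (\ref{E2.17}) to $F=f$, then change variables in the continuous part from $\lambda\in\mathbb{R}$ to $s=\lambda^2$, noting that the integrand is even in $\lambda$ (both $|C_\nu(\lambda)|^{-2}$ and the Poisson-kernel product integrated against $\tilde f$ are even, since $\tilde f(-\lambda,\omega)$ pairs with $P^\nu_{-\lambda}$), so that $\int_{\mathbb{R}}(\cdots)d\lambda = 2\int_0^\infty(\cdots)d\lambda = \int_0^\infty(\cdots)\frac{ds}{\sqrt s}$. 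This identifies $d(E^\nu_s f,g)/ds$ on $s>0$ with the continuous term, and the discrete sum over $j$ contributes point masses $\delta(s-s_j)$ at $s_j=\lambda_j^2=-(2j+\nu-n)^2$ via the projectors $P_j=\Pi_j$.

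Next I would convert the "double integral over $\partial\mathbb{B}_n$" appearing in (\ref{E2.17}) into a genuine kernel on $\mathbb{B}_n\times\mathbb{B}_n$. Writing out $\tilde f(\lambda,\omega)=\int_{\mathbb{B}_n}f(w)\overline{P^\nu_{-\lambda}(w,\omega)}\,d\mu_\nu(w)$ and substituting into the continuous part, the kernel of the continuous piece of $e^\nu_s$ becomes
\begin{align*}
\frac{\Gamma(n)}{4\pi^{n+1}2^{2(\nu-n)}}|C_\nu(\sqrt s)|^{-2}(\sqrt s)^{-1}\chi_+(s)\int_{\partial\mathbb{B}_n}P^\nu_{\sqrt s}(z,\omega)\,\overline{P^\nu_{-\sqrt s}(w,\omega)}\,d\sigma(\omega).
\end{align*}
The key computational step is to evaluate this spherical integral of the product of two Poisson kernels. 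Using the explicit form (\ref{E2.15}) of $P^\nu_\lambda$, the $\omega$-integral is a standard integral over the boundary sphere whose result is expressible through a Gauss hypergeometric function in the Bergman distance; concretely it should collapse to $(1-\langle z,w\rangle)^{-\nu}$ times the Jacobi function $\phi^{(n-1,-\nu)}_{\sqrt s}(d(z,w))$, i.e.\ ${}_2F_1\!\left(\frac{n-\nu-i\lambda}{2},\frac{n-\nu+i\lambda}{2},n;-\sinh^2 d(z,w)\right)$ up to the normalization already present. I expect this to follow from the known Funk–Hecke-type/spherical-function identity for $SU(1,n)$ — the same computation that produces the spherical function from the Poisson kernel in \cite{Zha,Koo} — so I would cite it rather than redo it.

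For the discrete part, the kernel of $P_j=\Pi_j$ is the reproducing kernel $H^\nu_j(z,w)=K^\nu_j(z,w)$ of the eigenspace $\mathscr{E}^\nu_{\rho_j}(\mathbb{B}_n)$, established in the proof of Proposition \ref{P2.2}; by (\ref{E2.30}), (\ref{E2.47}), (\ref{E2.48}) it equals $c_j(1-\langle z,w\rangle)^{-\nu}\,{}_2F_1(-j,n+j-\nu;n;1-\cosh^2 d(z,w))$. The last move is to rewrite this terminating ${}_2F_1$ as a Jacobi polynomial: using the standard identity $P^{(\alpha,\beta)}_j(x)=\binom{j+\alpha}{j}\,{}_2F_1\!\left(-j,j+\alpha+\beta+1;\alpha+1;\frac{1-x}{2}\right)$ with $\alpha=n-1$, $\beta=-\nu$, $x=\cosh(2d(z,w))$ (so that $\frac{1-x}{2}=-\sinh^2 d(z,w)=1-\cosh^2 d(z,w)$), one gets ${}_2F_1(-j,n+j-\nu;n;1-\cosh^2 d)=\frac{j!}{(n)_j}P^{(n-1,-\nu)}_j(\cosh(2d(z,w)))$, which turns the coefficient $c_j$ into $c_j\frac{j!}{(n)_j}$ exactly as in (\ref{E3.7}). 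Collecting the continuous and discrete contributions and recalling $E^\nu_s=\int_{-\infty}^s e^\nu_\sigma\,d\sigma$ yields the claimed formula. The main obstacle is the spherical integral over $\partial\mathbb{B}_n$ of the product of Poisson kernels; everything else is bookkeeping of constants and hypergeometric reductions. I would also double-check the even-in-$\lambda$ symmetry and the factor $2$ that arises from folding $\mathbb{R}$ onto $[0,\infty)$, since that is where sign/normalization errors in $|C_\nu|^{-2}$ and $\chi_+(s)(\sqrt s)^{-1}$ typically creep in.
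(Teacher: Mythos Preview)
Your approach is essentially the paper's: start from the Fourier--Helgason inversion (\ref{E2.17}), unfold $\tilde f$, integrate the product of Poisson kernels over $\partial\mathbb{B}_n$, use evenness in $\lambda$ and the substitution $s=\lambda^2$, and rewrite the discrete terms via the Jacobi polynomial identity. Two points where the paper does more than you sketch: (i) the spherical integral you flag as ``the main obstacle'' is not simply cited but proved in the paper as Lemma \ref{L3.1}, using the $SU(1,n)$-equivariance (\ref{E3.15}) together with the explicit transformation $g_z$ sending $0$ to $z$, which reduces the two-point integral to the one-point spherical function $\Phi_\lambda$ of \cite{Zha} and then to the Jacobi function via (\ref{E2.46}) and (\ref{E3.11}); (ii) after obtaining the candidate kernel, the paper does not just ``collect'' it but verifies that the associated functional $T$ lands in $\mathcal{D}'(\mathbb{R},L(\mathcal{D},L^2_\nu))$ by bounding $\langle T,\varphi\rangle$ (using that the discrete part is a finite sum of projectors and the continuous part $A_\varphi$ is controlled by $A_1=I-\sum_j P_j$), then checks $\langle T,1\rangle=I$ and $\langle T,s\rangle=\tilde\Delta_\nu$ and invokes uniqueness of the spectral density. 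A small bookkeeping slip: the Fourier--Helgason transform (\ref{E2.16}) has no complex conjugate on $P^\nu_{-\lambda}$, so the spherical integral you need is $\int_{\partial\mathbb{B}_n}P^\nu_\lambda(z,\omega)P^\nu_{-\lambda}(w,\omega)\,d\sigma(\omega)$, which the paper identifies with the quantity in Lemma \ref{L3.1}.
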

For given the proof of the above proposition, we need the following Lemma
\begin{lemma}\label{L3.1}We have the following formula
\begin{align}\label{E3.13}
\int_{\partial\mathbb{B}_{n}}P^{\nu}_{\lambda}(z,\omega)\overline{P^{\nu}_{\lambda}(w,\omega)}d\sigma(\omega)
=(1-<z,w>)^{-\nu}\prescript{}{2}{F}_1^{}\left( \frac{i\lambda +n -\nu}{2},\frac{-i\lambda +n -\nu}{2}, n;-\sinh^{2}(d(z,w))\right).
\end{align}
\end{lemma}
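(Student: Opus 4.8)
The plan is to substitute the explicit Poisson kernel (\ref{E2.15}) into the left-hand side and then reduce, by the $SU(1,n)$-equivariance of the family $P^{\nu}_{\lambda}$, to the special case $w=0$. Set $a=\tfrac{i\lambda+n-\nu}{2}$; taking $\lambda\in\mathbb{R}$ one has $\bar a=\tfrac{-i\lambda+n-\nu}{2}$, and using $|1-<z,\omega>|^{2}=(1-<z,\omega>)(1-<\omega,z>)$ together with $\overline{<w,\omega>}=<\omega,w>$ the integrand becomes the constant $(1-|z|^{2})^{a}(1-|w|^{2})^{\bar a}$ times $(1-<z,\omega>)^{-a-\nu}(1-<\omega,z>)^{-a}(1-<w,\omega>)^{-\bar a}(1-<\omega,w>)^{-\bar a-\nu}$. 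Now choose $g\in SU(1,n)$ with $g\cdot 0=w$ and substitute $\omega\mapsto g\cdot\eta$ on $\partial\mathbb{B}_{n}$: using (\ref{E2.8}) in its boundary form, the transformation law of $P^{\nu}_{\lambda}$ recorded in \cite{Zha}, the $K$-invariance of $d\sigma$, and the fact that $d\sigma(g\cdot\eta)$ acquires the classical invariant Poisson factor of the ball, all automorphy factors combine so that the left-hand side equals $(1-<z,w>)^{-\nu}$ times the same integral with $w$ replaced by $0$ and $z$ replaced by $g^{-1}\cdot z$; moreover $d(g^{-1}\cdot z,0)=d(z,w)$ since $d$ is $G$-invariant and $g^{-1}\cdot w=0$. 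Hence it suffices to treat the case $w=0$.

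For $w=0$ one has $P^{\nu}_{\lambda}(0,\omega)=1$, so the integral reduces to $(1-|z|^{2})^{a}\int_{\partial\mathbb{B}_{n}}(1-<z,\omega>)^{-a-\nu}(1-<\omega,z>)^{-a}d\sigma(\omega)$. Since $|z|<1$, I expand both factors in the Gauss series $(1-u)^{-s}=\sum_{k\ge0}\tfrac{(s)_{k}}{k!}u^{k}$ (once with $u=<z,\omega>$, once with $u=<\omega,z>$) and integrate term by term using the sphere moment identity $\int_{\partial\mathbb{B}_{n}}<z,\omega>^{k}<\omega,z>^{l}\,d\sigma(\omega)=\delta_{kl}\tfrac{k!}{(n)_{k}}|z|^{2k}$, valid with $d\sigma$ normalised to unit mass (the normalisation under which the lemma holds, as the case $z=w=0$ already shows). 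Only the diagonal terms $k=l$ survive and one obtains $(1-|z|^{2})^{a}\,{}_{2}F_{1}\!\left(a+\nu,a;n;|z|^{2}\right)$.

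Finally I apply Pfaff's transformation, i.e. formula (\ref{E2.46}), with first parameter $a$, second parameter $a+\nu$, third parameter $n$ and argument $|z|^{2}$: the factor $(1-|z|^{2})^{-a}$ it produces cancels $(1-|z|^{2})^{a}$, and since $n-(a+\nu)=\bar a$ and $\tfrac{|z|^{2}}{|z|^{2}-1}=-\tfrac{|z|^{2}}{1-|z|^{2}}=-\sinh^{2}d(z,0)$ by (\ref{E3.11}), the expression becomes ${}_{2}F_{1}\!\left(\tfrac{i\lambda+n-\nu}{2},\tfrac{-i\lambda+n-\nu}{2};n;-\sinh^{2}d(z,0)\right)$, which is the claimed right-hand side for $w=0$ (note $(1-<z,0>)^{-\nu}=1$); together with the reduction step this yields the lemma, the general complex $\lambda$ being covered by analytic continuation in $\lambda$. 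The main obstacle is the bookkeeping in the reduction step: one must track precisely how $P^{\nu}_{\lambda}(z,g\cdot\eta)$ relates to $P^{\nu}_{\lambda}(g^{-1}\cdot z,\eta)$ and how $d\sigma$ transforms, and check that the residual cocycle factors assemble exactly into $(1-<z,w>)^{-\nu}$ with argument $\sinh^{2}d(z,w)$. Alternatively one can bypass the reduction and evaluate the four-factor boundary integral directly via a double binomial expansion and the general sphere moment formula for $\int_{\partial\mathbb{B}_{n}}\omega^{\alpha}\overline{\omega}^{\beta}\,d\sigma(\omega)$, at the cost of a longer but entirely elementary resummation.
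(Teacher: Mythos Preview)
Your proof is correct and follows essentially the same strategy as the paper: use the $SU(1,n)$-equivariance of the kernel to reduce to the case where one argument is the origin, identify the resulting integral as the spherical function $\Phi_{\lambda}$, and then apply Pfaff's transformation (\ref{E2.46}) to obtain the stated hypergeometric form. The only minor differences are that the paper sends the \emph{first} argument to $0$ (reducing to $K_{\lambda}^{\nu}(0,g_{z}^{-1}w)=\overline{\Phi_{\lambda}(g_{z}^{-1}w)}$) and quotes the closed form of $\Phi_{\lambda}$ from \cite{Zha}, whereas you send $w$ to $0$ and derive $\Phi_{\lambda}$ directly via the binomial expansion and sphere moments; both routes yield the same ${}_{2}F_{1}$ after Pfaff.
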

\begin{proof}
First, we  will show that the function
\begin{align}\label{E3.14}
K_{\lambda}^{\nu}(z,w)=\int_{\partial\mathbb{B}_{n}}P^{\nu}_{\lambda}(z,\omega)
\overline{P^{\nu}_{\lambda}(w,\omega)}d\sigma(\omega)
\end{align}
satisfies the following property
\begin{align}\label{E3.15}
K_{\lambda}^{\nu}(g.z,g.w)=(Cz+D)^{\nu}\overline{(Cw+D)}^{\nu}K_{\lambda}^{\nu}(z,w),
\hspace{0.2cm}\mbox{for}\hspace{0.2cm}
g=\left(
\begin{matrix}
A & B \\
C & D
\end{matrix}
\right)\in SU(1,n).
\end{align}
By using the fact that
\begin{align}\label{E3.16}
1-<gz,gw>=\dfrac{1-<z,w>}{(Cz+D)\overline{(Cw+D)}},
\end{align}
we can easily see that
\begin{align}\label{E3.17}
P^{\nu}_{\lambda}(g.z,w)=P^{\nu}_{\lambda}(z,g^{-1}w)(Cz+D)^{\nu}\mid Cg^{-1}w+D\mid ^{i\lambda +n}\left( \dfrac{\mid Cg^{-1}w+D\mid}{\overline{Cg^{-1}w+D}}\right)^{-\nu}.
\end{align}
Then, we get
\begin{align}\label{E3.18}
\nonumber K_{\lambda}^{\nu}(g.z,g.w)&= \int_{\partial\mathbb{B}_{n}}P^{\nu}_{\lambda}(g.z,\omega)\overline{P^{\nu}_{\lambda}(g.w,\omega)}d\sigma(\omega)\\
&=(Cz+D)^{\nu}\overline{(Cw+D)^{\nu}}\int_{\partial\mathbb{B}_{n}}P^{\nu}_{\lambda}(z,g^{-1}.\omega)\overline{P^{\nu}_{\lambda}
(w,g^{-1}\omega)}\mid Cg^{-1}\omega +D\mid^{2n}d\sigma(\omega)\\
\nonumber &=(Cz+D)^{\nu}\overline{(Cw+D)^{\nu}}\int_{\partial\mathbb{B}_{n}}P^{\nu}_{\lambda_{j}}(z,\xi)\overline{P^{\nu}_{\lambda}(w,\xi)}d\sigma(\xi)
\end{align}
where we have set $\xi=g^{-1}w$ and $d\sigma(w)=\mid C \xi+D\mid^{-2n}d\sigma(\xi)$. Thus, we get
\begin{align}\label{E3.19}
K_{\lambda}^{\nu}(g.z,g.w)=(Cz+D)^{\nu}\overline{(Cw+D)}^{\nu}K_{\lambda}^{\nu}(z,w).
\end{align}
Let $g_{z}$ be the following matrix
\begin{align}\label{E3.20}
g_{z}=\left(
\begin{matrix}
A & B \\
C & D
\end{matrix}
\right):=\left(
  \begin{array}{cc}
    (I_{n}-zz^{*})^{-1/2} & z(1-\vert z\vert^{2} )^{-1/2} \\\\
    z^{*}(I_{n}-zz^{*})^{-1/2} &  (1-\vert z\vert^{2} )^{-1/2}\\
  \end{array}
\right).
\end{align}
Is not hard to show that $g_{z}$ belongs to the group $SU(1,n)$ and then  by formula (\ref{E2.6}),
 we get the inverse of $g_{z}$ as follows
\begin{align}\label{E3.21}
g_{z}^{-1}=\left(
  \begin{array}{cc}
    (I_{n}-zz^{*})^{-1/2} & -(I_{n}-zz^{*})^{-1/2}z  \\\\
    z^{*}(1-\vert z\vert^{2} )^{-1/2} & (1-\vert z\vert^{2} )^{-1/2}\\
  \end{array}
\right).
\end{align}
We have that the action of the matrix $g_{z}$ satisfy $g_{z}.0=z$. So, thank to this last relation combined with  formula (\ref{E3.19}), we can write
\begin{align}\label{E3.22}
\nonumber K_{\lambda}^{\nu}(z,w)&=K_{\lambda}^{\nu}(g_{z}.0,w)\\
\nonumber\\
\nonumber&=K_{\lambda}^{\nu}(g_{z}.0,g_{z}g_{z}^{-1}w)\\
\nonumber\\
 &=D^{\nu}\overline{(Cg_{z}^{-1}w+D)^{\nu}}K_{\lambda}^{\nu}(0,g_{z}^{-1}.w).
\end{align}
Observe that
\begin{align}\label{E3.23}
1-<z,w>&=1-<g_{z}0,g_{z}g_{z}^{-1}w>\\
\nonumber &=\dfrac{1}{D(\overline{Cg^{-1}_{z}w+D})},
\end{align}
where we have used formula (\ref{E3.16}). Thus, the formula (\ref{E3.22}) can be rewritten as
\begin{align}\label{E3.24}
K_{\lambda}^{\nu}(z,w)=(1-<z,w>)^{-\nu}K_{\lambda}^{\nu}(0,g^{-1}_{z}w).
\end{align}
By setting $\xi=g^{-1}_{z}w$, the above equation becomes
\begin{align}\label{E3.25}
K_{\lambda}^{\nu}(z,w)=(1-<z,w>)^{-\nu}K_{\lambda}^{\nu}(0,\xi).
\end{align}
By using the formula \cite{Zha}
\begin{align}\label{E3.26}
\nonumber \Phi_{\lambda}(z) &= \int_{\partial\mathbb{B}_{n}}P^{\nu}_{\lambda}(z,\omega)d\sigma(\omega) \\
&=\left( 1-\mid z \mid ^{2}\right)^{\dfrac{-\nu+n-i\lambda}{2}}\prescript{}{2}{F}_1^{}\left( \frac{-i\lambda +n +\nu}{2},\frac{-i\lambda +n -\nu}{2}, n;\mid z \mid^{2}\right).
\end{align}
The equation (\ref{E3.25}) can be also rewritten as
\begin{align}\label{E3.27}
\nonumber K_{\lambda}^{\nu}(z,w)&=(1-<z,w>)^{-\nu}\int_{\partial\mathbb{B}_{n}}\overline{P^{\nu}_{\lambda}(\xi,\omega)}d\sigma(\omega) \\
\nonumber &=(1-<z,w>)^{-\nu}\overline{\Phi_{\lambda}(\xi)} \\
&=(1-\mid \xi \mid ^{2})^{\dfrac{-\nu+n+i\lambda}{2}}(1-<z,w>)^{-\nu}\prescript{}{2}{F}_1^{}\left(\frac{i\lambda+n +\nu}{2},\frac{i\lambda +n -\nu}{2}, n;\mid \xi \mid^{2}\right).
\end{align}
 Not that
\begin{align}\label{E3.28}
\nonumber 1-|\xi|^{2}=1-\mid g ^{-1}_{z}w\mid^{2}&=\frac{1-\mid w \mid ^{2}}{\vert\left( 1-\mid z \mid ^{2}\right)^{-1/2}(1- z^{*}w) \vert^{2}}\\
& =\frac{(1-\mid w \mid ^{2})(1-\mid z \mid ^{2})}{\mid 1-<z,w>\mid^{2}}.
\end{align}
Hence, by using formula $(\ref{E3.28})$, we get
\begin{align}\label{E3.29}
\nonumber K_{\lambda}^{\nu}(z,w) &= (1-<z,w>)^{-\nu} \left[ \frac{(1-\mid w \mid ^{2})(1-\mid z \mid )^{2}}{\mid 1-<z,w>\mid^{2}} \right] ^{\dfrac{i\lambda + n-\nu}{2}}\\
&\times\prescript{}{2}{F}_1^{}\left(\frac{i\lambda +n -\nu}{2},\frac{i\lambda +n+\nu}{2}, n;\mid g_{z}^{-1}w \mid^{2} \right).
\end{align}
Then, by using $(\ref{E3.28})$ and $(\ref{E2.46})$, we get
\begin{align}\label{E3.30}
\nonumber K_{\lambda}^{\nu}(z,w)&=(1-<z,w>)^{-\nu}\prescript{}{2}{F}_1^{}\left(\frac{i\lambda +n -\nu}{2},\frac{-i\lambda +n -\nu}{2}, n;\frac{\mid g_{z}^{-1}w \mid^{2}}{\mid g_{z}^{-1}z \mid^{2}-1}\right)\\
&= (1-<z,w>)^{-\nu}\prescript{}{2}{F}_1^{}\left(\frac{i\lambda +n-\nu}{2},\frac{-i\lambda +n -\nu}{2}, n; 1-\frac{\mid 1-<z,w>\mid^{2}}{(1-\mid z \mid ^{2})(1-\mid w \mid ^{2})}\right),
\end{align}
then by $(\ref{E1.16})$, we obtain
\begin{align}\label{E3.31}
 K_{\lambda}^{\nu}(z,w)=(1-<z,w>)^{-\nu}\prescript{}{2}{F}_1^{}\left(\frac{i\lambda +n -\nu}{2},\frac{-i\lambda +n -\nu}{2}, n; -\sinh^{2}(d(z,w))) \right).
\end{align}
Hence, the above equality is the desired result.
\end{proof}
\begin{proof}(\textbf{of Proposition} \ref{P3.1})\\
Let $F\in\mathcal{C}_{0}^{\infty}(\mathbb{B}_{n})$, then by using formula (\ref{E2.17}), we have
\begin{align}\label{E3.32}
\nonumber F(z)&=\dfrac{1}{4}\frac{\Gamma(n)}{2^{2(\nu-n)}\pi^{n+1}} \int_{
\partial\mathbb{B}_{n}}\int_{\mathbb{R}}\tilde{F}(\lambda,\omega)P^{\nu}_{\lambda}(z,\omega)\mid C_{\nu}(\lambda)\mid^{-2}d\lambda d\sigma(\omega)\\
\nonumber&+\sum_{j=0}^{k}c_{j}\int_{\mathbb{B}_{n}}\tilde{F}(\lambda_{j},\omega)P^{\nu}_{\lambda_{j}}(z,\omega) d\sigma(\omega)\\
\nonumber&=\frac{\Gamma(n)}{4\pi^{n+1}2^{2(\nu-n)}}\int_{-\infty}^{+\infty}d\lambda |C_{\nu}(\lambda)|^{-2}\int_{\mathbb{B}_{n}}\left(\int_{\partial\mathbb{B}_{n}}P^{\nu}_{\lambda}(z,\omega)P^{\nu}_{-\lambda}(w,\omega)d\sigma(\omega)\right)F(w)d\mu_{\nu}(w)\\
&+\sum_{0}^{[\frac{\nu-n}{2}]}c_{j}\int_{\mathbb{B}_{n}}\left(\int_{\partial\mathbb{B}_{n}}P^{\nu}_{\lambda_{j}}(z,\omega)P^{\nu}_{-\lambda_{j}}(w,\omega)d\sigma(\omega)\right)F(w)d\mu_{\nu}(w).
\end{align}
Making use of formula $(\ref{E3.13})$ in where the hypergeometric function in the right hand side was replaced by the corresponding Jacobi function defined in $(\ref{E3.10})$, then  we get
\begin{align}\label{E3.33}
F(z)&=\frac{\Gamma(n)}{4\pi^{n+1}2^{2(\nu-n)}}\int_{-\infty}^{+\infty}
\left[|C_{\nu}(\lambda)|^{-2}\int_{\mathbb{B}_{n}}(1-<z,w>)^{-\nu}(\phi^{(n-1,-\nu)}_{\lambda}(d(z,w)))F(w)d\mu_{\nu}(w)\right]d\lambda
\nonumber\\
&+\sum_{0}^{[\frac{\nu-n}{2}]}c_{j}\int_{\mathbb{B}_{n}}(1-<z,w>)^{-\nu}(\phi^{(n-1,-\nu)}_{\lambda_{j}}(d(z,w)))F(w)d\mu_{\nu}(w).
\end{align}
It is not difficult to see that the function $\lambda\longmapsto \phi^{(n-1,-\nu)}_{\lambda}(d(z,w))$ is even. Then, the  equation (\ref{E3.33}) can be rewritten as
\begin{align}\label{E3.34}
\nonumber F(z)&=\frac{\Gamma(n)}{42^{2(\nu-n)}\pi^{n+1}}2\int_{0}^{+\infty}d\lambda |C_{\nu}(\lambda)|^{-2}\int_{\mathbb{B}_{n}}(1-<z,w>)^{-\nu}(\phi^{(n-1,-\nu)}_{\lambda}(d(z,w)))F(w)d\mu_{\nu}(w)\\
&+\sum_{0}^{[\frac{\nu-n}{2}]}c_{j}\int_{\mathbb{B}_{n}}(1-<z,w>)^{-\nu}(\phi^{(n-1,-\nu)}_{\lambda_{j}}
(d(z,w)))F(w)d\mu_{\nu}(w).
\end{align}
Making use of the change of variable $s=\lambda^{2}$ in the first integral of (\ref{E3.34}) and setting
$s_{j}=\lambda_{j}^{2}=-(\nu-n-2j)^{2}$ in the discreet part of (\ref{E3.34}), we obtain
\begin{align}\label{E3.35}
\nonumber F(z)&=\frac{\Gamma(n)}{4\pi^{n+1}2^{2(\nu-n)}}\int_{0}^{+\infty}\left[|C_{\nu}(\sqrt{s})|^{-2}s^{\frac{-1}{2}}
\int_{\mathbb{B}_{n}}(1-<z,w>)^{-\nu}(\phi^{(n-1,-\nu)}_{\sqrt{s}}(d(z,w)))F(w)d\mu_{\nu}(w)\right]ds\\
&+\sum_{0}^{[\frac{\nu-n}{2}]}c_{j}\int_{\mathbb{B}_{n}}(1-<z,w>)^{-\nu}(\phi^{(n-1,-\nu)}_{\sqrt{s_{j}}}(d(z,w)))F(w)d\mu_{\nu}(w).
\end{align}
Using the Dirac notation
\begin{align}\label{E3.36}
\int_{-\infty}^{+\infty}\delta(s-s_{j})ds=1.
\end{align}
Then, the equation  $(\ref{E3.35})$ can be written in the distributional sense as follows
\begin{align}\label{E3.37}
F(z)=\int_{-\infty}^{\infty}1(\int_{\mathbb{B}_{n}}e(s,w,z)F(w)d\mu_{\nu}(w))ds,
\end{align}
where the Schwartz kernel $e^\nu (s,w,z)$ is given by
\begin{align}\label{E3.38}
\nonumber e^\nu (s,w,z)&=\frac{\Gamma(n)}{4\pi^{n+1}2^{2(\nu-n)}}(1-<z,w>)^{-\nu}\chi_{+}(s)|C_{\nu}(\sqrt{s})|^{-2}
(\sqrt{s})^{-1}\phi_{\sqrt{s}}^{(n-1,-\nu)}(d(z,w))\\
&+\sum_{j=0}^{[\frac{\nu-n}{2}]}c_{j}(1-<z,w>)^{-\nu}\phi_{\sqrt{s_{j}}}^{(n-1,-\nu)}(d(z,w)\delta(s-s_{j}).
\end{align}
Recall that $\lambda_{j}=\sqrt{s_{i}}:=-i(\nu-n-2j)$ for  $ j=0,1,...,\frac{n-\nu}{2}$. Then the Jacobi function $\phi_{\sqrt{s_{j}}}^{(n-1,-\nu)}(d(z,w))$ involved in the discreet part of the Schwartz kernel $e^{\nu}(s,w,z)$  becomes
\begin{align}\label{E3.39}
\phi_{\sqrt{s_{j}}}^{(n-1,-\nu)}(d(z,w))=\prescript{}{2}{F}_1^{}(-j,j+n-\nu,n;-\sinh^{2}(d(z,w)).
\end{align}
Next, by using the identity \cite[p.39]{Mag}
\begin{align}\label{E3.40}
P_{k}^{(\alpha,\beta)}(y)=\frac{(1+\alpha)_{k}}{k!}\prescript{}{2}{F}_1^{}(-k,\alpha+\beta+k+1,\alpha+1;\frac{1-y}{2}),
\end{align}
for $\alpha=n-1$, $\beta=-\nu$, $k=j$ and $y=1+2\sinh^{2}(d(z,w))=\cosh2d(z,w)$, the equation (\ref{E3.39}) becomes
\begin{align}\label{E3.41}
\phi_{\sqrt{s_{j}}}^{(n-1,-\nu)}(d(z,w))=\frac{j!}{(n)_{j}}P_{j}^{(n-1,-\nu)}(\cosh2d(z,w)).
\end{align}
Hence, we get
\begin{align}\label{E3.42}
\nonumber e^\nu (s,w,z)&=\frac{\Gamma(n)}{4\pi^{n+1}2^{2(\nu-n)}}(1-<z,w>)^{-\nu}\chi_{+}(s)|C_{\nu}(\sqrt{s})|^{-2}
(\sqrt{s})^{-1}\phi_{\sqrt{s}}^{(n-1,-\nu)}(d(z,w))\\
&+\sum_{j=0}^{[\frac{\nu-n}{2}]}c_{j}\frac{j!}{(n)_{j}}(1-<z,w>)^{-\nu}P_{j}^{(n-1,-\nu)}(\cosh(2d(z,w)))\delta(s-s_{j}).
\end{align}
Now, by returning back to equation (\ref{E3.32}) and applying the operator $\tilde{\Delta}_{\nu}$ to its both sides with the use of the fact that the involved function in the integral is an eigenfunction of $\tilde{\Delta}_{\nu}$ associated with the eigenvalues $s=\lambda^{2}$, $\lambda\in \mathbb{R}$. Also, we use the fact that each function given in the discrete part is an eigenfunction of $\tilde{\Delta}_{\nu}$ associated with the eigenvalue $s_{j}=\lambda_{j}^{2}$ for $j=0,1,...,[\frac{\nu-n}{2}]$. Then, by following the same method as in the above computation, we get the following  equality
\begin{align}\label{E3.43}
\tilde{\Delta}_{\nu}[F](z)=\int_{-\infty}^{+\infty}s\left[\int_{\mathbb{B}_{n}}e^{\nu}(s,w,z)F(w)d\mu_{\nu}(w)\right]ds.
\end{align}
Now, considering the functional $T$ which corresponds to a test function $\varphi$ the operator $<T,\varphi> \in \mathbf{L}(D,L_{\nu}^{2}(\mathbb{B}_{n}))$ defined by:
\begin{align}\label{E3.44}
\nonumber&<T,\varphi>[F](z)=\int_{-\infty}^{+\infty}\varphi(s)\left[\int_{\mathbb{B}_{n}}e^{\nu}(s,w,z)F(w)d\mu_{\nu}(w)\right]ds\\
\nonumber&:=\frac{\Gamma(n)}{4\pi^{n+1}2^{2(\nu-n)}}\int_{0}^{+\infty}|C_{\nu}(\sqrt{s})|^{-2}
(\sqrt{s})^{-1}\int_{\mathbb{B}_{n}}(1-<z,w>)^{-\nu}\phi_{\sqrt{s}}^{(n-1,-\nu)}(d(z,w))F(w)d\mu_{\nu}(w)\varphi(s)ds\\ &+\sum_{j=0}^{[\frac{\nu-n}{2}]}\varphi(s_{j})c_{j}\frac{j!}{(n)_{j}}\int_{\mathbb{B}_{n}}(1-<z,w>)^{-\nu}P_{j}^{(n-1,-\nu)}
(\cosh(2d(z,w)))F(w)d\mu_{\nu}(w).
\end{align}
Notice that from the above computation, is not hard to see that the projector $P_{j}$, defined in (\ref{E2.20}), can be written also in the following form
\begin{align}\label{E3.45}
P_{j}[F](z)=c_{j}\frac{j!}{(n)_{j}}\int_{\mathbb{B}_{n}}(1-<z,w>)^{-\nu}P_{j}^{(n-1,-\nu)}(\cosh(2d(z,w)))
F(w)d\mu_{\nu}(w).
\end{align}
We observe that the discrete part of the functional $<T,\varphi>[F]$ can be rewritten also as
\begin{align}\label{E3.46}
\sum_{j=0}^{[\frac{\nu-n}{2}]} \varphi(s_{j})P_{j}[F],
\end{align}
where $P_{j}$ is the orthogonal projector on the eigenspace defined in equation (\ref{E2.20}). Using the fact that the norm of a projector is equal to one. The norm of the discrete part satisfy the following estimate
\begin{align}\label{E3.47}
\parallel \sum_{j=0}^{[\frac{\nu-n}{2}]} \varphi(s_{j})P_{j}[F]\parallel_{L^{2}_{\nu}(\mathbb{B}_{n})}\leq
\parallel\varphi\parallel_{\infty}(1+\frac{\nu-n}{2})\parallel F\parallel_{L^{2}_{\nu}(\mathbb{B}_{n})},
\end{align}
with $\parallel\varphi\parallel_{\infty}=\displaystyle{\sup_{x\in\mathbb{R}}}\mid\varphi(x)\mid<\infty$.\\
Then, for proving that the functional $T$ is well defined, it is remain to prove that the following integral transform
\begin{align}\label{E3.48}
\hspace{-0.2cm}A_{\varphi}[F](z):=M_{n,\nu}\int_{0}^{+\infty}|C_{\nu}(\sqrt{s})|^{-2}
(\sqrt{s})^{-1}\int_{\mathbb{B}_{n}}(1-<z,w>)^{-\nu}\phi_{\sqrt{s}}^{(n-1,-\nu)}(d(z,w))F(w)d\mu_{\nu}(w)\varphi(s)ds
\end{align}
define a bounded operator on $L^{2}_{\nu}(\mathbb{B}_{n})$, where $M_{n,\nu}=\frac{\Gamma(n)}{4\pi^{n+1}2^{2(\nu-n)}}$. To do so, we first consider the case $\varphi\equiv1$. Then, thanks to equation (\ref{E3.35}), the function $A_{1}[F](z)$ can be rewritten as
\begin{align}\label{E3.49}
\nonumber A_{1}[F](z)&=F(z)-\sum_{j=0}^{[\frac{\nu-n}{2}]}c_{j}\frac{j!}{(n)_{j}}
\int_{\mathbb{B}_{n}}(1-<z,w>)^{-\nu}P_{j}^{(n-1,-\nu)}(\cosh(2d(z,w)))F(w)d\mu_{\nu}(w)\\
&=[I-\sum_{j=0}^{[\frac{\nu-n}{2}]}P_{j}][F](z),
\end{align}
where $F\in C_{0}^{\infty}(\mathbb{B}_{n})$.\\
Then, we have the following estimate
\begin{align}\label{E3.50}
\parallel A_{1}[F]\parallel_{L^{2}_{\nu}(\mathbb{B}_{n})}\leq(2+\frac{\nu-n}{2})\parallel F\parallel_{L^{2}_{\nu}(\mathbb{B}_{n})},\hspace{0.2cm}F\in C_{0}^{\infty}(\mathbb{B}_{n}).
\end{align}
Now, for a test function $\varphi$, it is easy to show that
\begin{align}\label{E3.51}
\mid A_{\varphi}[F](z)\mid\leq\parallel\varphi\parallel_{\infty}\mid A_{1}[F](z)\mid.
\end{align}
This last inequality combined with estimate (\ref{E3.50}) implies that
\begin{align}\label{E3.52}
\parallel A_{\varphi}[F](z)\parallel_{L^{2}_{\nu}(\mathbb{B}_{n})}\leq\parallel\varphi\parallel_{\infty}(2+\frac{\nu-n}{2})\parallel F\parallel_{L^{2}_{\nu}(\mathbb{B}_{n})},
\end{align}
which prove the boundedness of the operator $A_{\varphi}$.\\
Finally, by  (\ref{E3.48}) and (\ref{E3.52}), we get
\begin{align}\label{E3.53}
\parallel <T,\varphi>[F]\parallel_{L^{2}_{\nu}(\mathbb{B}_{n})}\leq\parallel\varphi\parallel_{\infty}(3+\nu-n)\parallel F\parallel_{L^{2}_{\nu}(\mathbb{B}_{n})}.
\end{align}
Furthermore, the two equations given in (\ref{E3.3}) become as follows,  respectively:
\begin{align}\label{E3.54}
<T,1>=I, \hspace{0.2cm} <T,s>=\tilde{\Delta}_{\nu}.
\end{align}
By uniqueness of the spectral density associated to a self-adjoint operator, we conclude that the functional $T$ is nothing but the spectral density of the operator $\tilde{\Delta}_{\nu}.$ This ends the proof.
\end{proof}
\begin{remark}\label{R3.2} Note that the operator valued function
 \begin{align}\label{E3.56}
 &\mathbb{R}\longrightarrow (L_{\nu}^{2}(\mathcal{D},B_{n}),\parallel.\parallel_{\mathcal{L}})\\
 \nonumber &\lambda\longmapsto E_{\lambda}^{\nu}
 \end{align}
 is locally integrable, where $\parallel.\parallel_{\mathcal{L}}$ is the classical norm on the Banach space of bounded operators from the domain $\mathcal{D}$ onto the whole space $L^{2}_{\nu}(B_{n})$. Then by using the fact that the point spectrum
 \begin{align}\label{E3.57}
\sigma_{p}(\tilde{\Delta}_{\nu})=\{s_{j}=
\lambda_{j}^{2},\hspace{0.2cm}j=0,...,[\frac{\nu-n}{2}]\}
 \end{align}
 is the set of the discontinuity points of the vector valued function $s\longmapsto E_{s}^{\nu}$. Then the derivation formula $\frac{dE_{s}^{\nu}}{ds}$ in the distributional sense must contain the jumps $E^{\nu}_{s_{j}+0}-E^{\nu}_{s_{j}-0}$ and the Dirac distribution $\delta(s-s_{j})$. So, is not surprising that the discrete part in the Schwartz kernel  $e^{\nu}(s,z,w)$ involves the above Dirac distributions.
\end{remark}
Based on the explicit formula of the spectral density  $\frac{dE_{s}^{\nu}}{ds}$, we use the functional
calculus developed by Estrada and Fuling \cite{Est} to define, for a suitable function  $f:\mathbb{R}\rightarrow\mathbb{C}$, the operator function $f(\tilde{\Delta}_{\nu})$ as follows
\begin{align}\label{E3.58}
f(\tilde{\Delta}_{\nu})[\varphi](z)=\int_{\mathbb{B}_{n}}\Omega_{f}(w,z)\varphi(w)d\mu_{\nu}(w),
\end{align}
where the distributional kernel  $\Omega_{f}(w,z)$ is given by
\begin{align}\label{E3.59}
\nonumber\Omega_{f}(w,z)&=\int_{\sigma(\tilde{\Delta}_{\nu})}e^{\nu}(s,w,z)f(s)ds\\
\nonumber&:=\frac{\Gamma(n)}{4\pi^{n+1}2^{2(\nu-n)}}(1-<z,w>)^{-\nu}\int_{0}^{+\infty}|C_{\nu}(\sqrt{s})|^{-2}(\sqrt{s})^{-1}
\phi_{\sqrt{s}}^{(n-1,-\nu)}(d(z,w))f(s)ds\\
&+\sum_{j=0}^{[\frac{\nu-n}{2}]}c_{j}\frac{j!}{(n)_{j}}(1-<z,w>)^{-\nu}
P_{j}^{(n-1,-\nu)}(\cosh(2d(z,w)))f(s_{j}),
\end{align}
with $C_{\nu}$ is the Harish-Chandra function defined in $(\ref{E3.12})$, $P^{(n-1,-\nu)}_j(.)$ are the Jacobi polynomials \cite{Bea}, $\phi_{\sqrt{s}}^{(n-1,-\nu)}(.)$ is the Jacobi function defined in $(\ref{E3.10})$ and
\begin{align}\label{E3.60}
 s_{j}=-(2j+\nu-n)^{2};j=0,...,[\frac{\nu-n}{2}],
 \end{align}
\begin{align}\label{E3.61}
c_{j}=\frac{2\Gamma(n+j)}{\pi^{n}\Gamma(n)j!}\frac{(\nu-n-2j)\Gamma(\nu-j)}{\Gamma(\nu-n-j+1)}.
\end{align}
                           \section{Heat and resolvent kernels}
In this section, we solve the heat equation and  we give the resolvent kernel. First, we are interested to following Cauchy problem of heat equation associated with the operator $\Delta_{\nu}$ on $\mathbb{B}_{n}$:
\begin{align}\label{E4.1}
\left\{
  \begin{array}{ll}
    \frac{\partial u(t,z)}{\partial t}=\Delta_{\nu}u(t,z),\hspace{0.2cm} (t,z)\in\mathbb{R}^{+}\times \mathbb{B}_{n}, & \hbox{} \\\\
    u(0,z)=\varphi(z)\in\mathcal{C}_{0}^{\infty}(\mathbb{B}_{n}).& \hbox{}
  \end{array}
\right.
\end{align}
For the above Cauchy problem, we have the following proposition.
\begin{proposition}\label{P4.1} The solution $u(t,z)$ of the Cauchy problem $(\ref{E4.1})$ is given by the following  integral formula
\begin{align}\label{E4.2}
u(t,z)=\int_{\mathbb{B}_{n}}K_{\nu}(t,z,w)\varphi(w)d\mu_{\nu}(w),
\end{align}
where $K_{\nu}(t,z,w)$ is the heat kernel given as follows
\begin{align}\label{E4.3}
\nonumber K_{\nu}(t,z,w)&=(1-<z,w>)^{-\nu}\sum_{j=0}^{[\frac{\nu-n}{2}]}\tau_{j}e^{4j(j+n-\nu)t}P_{j}^{(n-1,-\nu)}(\cosh2d(z,w))\\
\nonumber&+(1-<z,w>)^{-\nu}e^{-t(\nu-n)^{2}}\frac{\Gamma(n)}{2\pi^{n+1}2^{2(\nu-n)}}\\
&\times \int_{0}^{+\infty}e^{-t\lambda^{2}}\mid C_{\nu}(\lambda)\mid^{-2}\prescript{}{2}{F}_1^{}(\frac{n-\nu-i\lambda}{2},\frac{n-\nu+i\lambda}{2},n;-\sinh^{2}(d(z,w)))d\lambda.
\end{align}
where $\tau_{j}=\frac{2(\nu-n-2j)\Gamma(\nu-j)}{\pi^{n}\Gamma(\nu-n-j+1)}$ and $C_{\nu}(\lambda)$ is the Harish-Chandra function defined in (\ref{E3.12}).
\end{proposition}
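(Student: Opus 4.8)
The plan is to obtain $u(t,\cdot)=e^{t\Delta_{\nu}}[\varphi]$ from the spectral density of Proposition \ref{P3.1} through the Estrada--Fulling functional calculus recalled in $(\ref{E3.58})$--$(\ref{E3.59})$. First I would note that $\varphi\in C_{0}^{\infty}(\mathbb{B}_{n})\subset\mathcal{D}$ and that $\Delta_{\nu}$ is self-adjoint and bounded above (its spectrum lies in $(-\infty,0]$ by Proposition \ref{P2.1}); hence the classical theory of semigroups generated by self-adjoint operators shows that $u(t,z):=e^{t\Delta_{\nu}}[\varphi](z)$ is the unique solution of $(\ref{E4.1})$: it is strongly $C^{1}$ in $t\ge 0$ with values in $L^{2}_{\nu}(\mathbb{B}_{n})$, satisfies $\partial_{t}u=\Delta_{\nu}u$, and $u(t,\cdot)\to\varphi$ in $L^{2}_{\nu}(\mathbb{B}_{n})$ as $t\to 0^{+}$. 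So the task reduces to computing the Schwartz kernel of $e^{t\Delta_{\nu}}$.

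Since $\Delta_{\nu}=-\tilde{\Delta}_{\nu}-(\nu-n)^{2}I$ by $(\ref{E3.1})$, one has $e^{t\Delta_{\nu}}=e^{-t(\nu-n)^{2}}\,f_{t}(\tilde{\Delta}_{\nu})$ with $f_{t}(s)=e^{-ts}$, and I would apply $(\ref{E3.59})$ with $f=f_{t}$. In the continuous part I would change variables $s=\lambda^{2}$, so that $(\sqrt{s})^{-1}ds=2\,d\lambda$ and $\int_{0}^{+\infty}|C_{\nu}(\sqrt{s})|^{-2}(\sqrt{s})^{-1}\phi_{\sqrt{s}}^{(n-1,-\nu)}(d(z,w))e^{-ts}\,ds=2\int_{0}^{+\infty}|C_{\nu}(\lambda)|^{-2}e^{-t\lambda^{2}}\phi_{\lambda}^{(n-1,-\nu)}(d(z,w))\,d\lambda$; the factor $2$ turns the constant $\frac{\Gamma(n)}{4\pi^{n+1}2^{2(\nu-n)}}$ into $\frac{\Gamma(n)}{2\pi^{n+1}2^{2(\nu-n)}}$, and by $(\ref{E3.10})$ with $\alpha=n-1,\ \beta=-\nu$ we have $\phi_{\lambda}^{(n-1,-\nu)}(d(z,w))=\prescript{}{2}{F}_1^{}(\frac{n-\nu-i\lambda}{2},\frac{n-\nu+i\lambda}{2},n;-\sinh^{2}d(z,w))$, so together with the prefactor $e^{-t(\nu-n)^{2}}$ this is exactly the second line of $(\ref{E4.3})$. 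In the discrete part I would use $s_{j}=\lambda_{j}^{2}=-(\nu-n-2j)^{2}$, whence $f_{t}(s_{j})=e^{t(\nu-n-2j)^{2}}$ and $e^{-t(\nu-n)^{2}}e^{t(\nu-n-2j)^{2}}=e^{4j(j+n-\nu)t}$; finally $(n)_{j}=\Gamma(n+j)/\Gamma(n)$ gives $c_{j}\frac{j!}{(n)_{j}}=\frac{2(\nu-n-2j)\Gamma(\nu-j)}{\pi^{n}\Gamma(\nu-n-j+1)}=\tau_{j}$, reproducing the first sum in $(\ref{E4.3})$.

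The remaining point is to justify these manipulations. For each fixed $t>0$ the integrand $e^{-t\lambda^{2}}|C_{\nu}(\lambda)|^{-2}$ is integrable on $(0,+\infty)$ since $|C_{\nu}(\lambda)|^{-2}$ grows only polynomially in $\lambda$ (from the explicit Gamma factors in $(\ref{E3.12})$) while the Gaussian kills it, and $\phi_{\lambda}^{(n-1,-\nu)}$ stays bounded on the relevant range of its argument; the same estimates permit differentiation under the integral in $t$ and in $z$, so that $\partial_{t}u=\Delta_{\nu}u$ can be checked term by term, using that each hypergeometric/Jacobi factor is an eigenfunction of $\Delta_{\nu}$. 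The initial condition follows from $f_{t}\to 1$ pointwise and boundedly on $\sigma(\tilde{\Delta}_{\nu})$ together with the fact, established in the proof of Proposition \ref{P3.1} (see $(\ref{E3.54})$), that the functional calculus sends the constant function $1$ to the identity. I expect the main obstacle to be precisely this analytic part: verifying that $f_{t}(s)=e^{-ts}$ is an admissible symbol for the Estrada--Fulling calculus on $\tilde{\Delta}_{\nu}$, that the operator so produced genuinely coincides with the heat semigroup (uniqueness of the functional calculus), and that the uniform bounds above are strong enough to move $\Delta_{\nu}$ inside the spectral integral; once that is secured, only the constant bookkeeping sketched above remains.
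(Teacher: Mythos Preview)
Your proposal is correct and follows essentially the same route as the paper: write $e^{t\Delta_{\nu}}=e^{-t(\nu-n)^{2}}e^{-t\tilde{\Delta}_{\nu}}$, express $e^{-t\tilde{\Delta}_{\nu}}$ via the functional calculus $(\ref{E3.58})$--$(\ref{E3.59})$ with $f(s)=e^{-ts}$, change variables $s=\lambda^{2}$ in the continuous part, and simplify $c_{j}\frac{j!}{(n)_{j}}=\tau_{j}$ and $e^{-t(\nu-n)^{2}}e^{t(\nu-n-2j)^{2}}=e^{4j(j+n-\nu)t}$ in the discrete part. In fact you supply more analytic justification (integrability via the Gaussian against the polynomial growth of $|C_{\nu}(\lambda)|^{-2}$, differentiation under the integral, the initial condition) than the paper, which simply records the computation.
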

\begin{proof}
The solution $u(t,z)$ is given by the action of the semigroup $e^{t\Delta_{\nu}}$ on the initial data $\varphi(z)$ as follows
\begin{align}\label{E4.4}
u(t,z)=e^{t\Delta_{\nu}}[\varphi](z).
\end{align}
Note that for the operators $\Delta_{\nu}$ and $\tilde{\Delta}_{\nu}=-(\Delta_{\nu}+(n-\nu)^{2})$, we have the following semigroup relation
\begin{align}\label{E4.5}
e^{t\Delta_{\nu}}=e^{-t(\nu-n)^{2}}e^{-t\tilde{\Delta}_{\nu}}.
\end{align}
Then, the heat kernel $k(t,z,w)$ of $\Delta_{\nu}$ is given by
\begin{align}\label{E4.6}
K_{\nu}(t,z,w)=e^{-t(\nu-n)^{2}}\tilde{K}_{\nu}(t,z,w),
\end{align}
where $\tilde{K}_{\nu}(t,z,w)$ is the heat kernel of the operator $\tilde{\Delta}_{\nu}$.
By using (\ref{E3.58}) and (\ref{E3.59}), we write
\begin{align}\label{E4.7}
\nonumber \tilde{K}_{\nu}(t,z,w)&=\int_{\sigma(\tilde{\Delta}_{\nu})}e_{s}^{\nu}(s,w,z)e^{-ts}ds\\
\nonumber &=\sum_{j=0}^{[\frac{\nu-n}{2}]}c_{j}\frac{j!}{(n)_{j}}(1-<z,w>)^{-\nu}P_{j}^{(n-1,-\nu)}(\cosh2d(z,w))e^{-ts_{j}}\\
&+\frac{\Gamma(n)}{4\pi^{n+1}2^{2(\nu-n)}}(1-<z,w>)^{-\nu}\int_{0}^{+\infty}|C_{\nu}(\sqrt{s})|^{-2}
(\sqrt{s})^{-1}\phi_{\sqrt{s}}^{(n-1,\nu)}(d(z,w))e^{-ts}ds.
\end{align}
By replacing the Jacobi function $\phi_{\sqrt{s}}^{(n-1,\nu)}(.)$ by its expression given in terms of the hypergeometric  function defined in (\ref{E3.10}) with using the change of variable $s=\lambda^{2}$  ($\lambda>0$) in the continuous part
and replacing $s_{j}$ by its value $s_{j}=-(\nu-n-2j)^{2}$ in the discrete part, we obtain
\begin{align}\label{E4.8}
\nonumber\tilde{K}_{\nu}(t,z,w)&=(1-<z,w>)^{-\nu}\sum_{j=0}^{[\frac{\nu-n}{2}]}c_{j}\frac{j!}{(n)_{j}}e^{(\nu-n-2j)^{2}t}
P_{j}^{(n-1,-\nu)}(\cosh2d(z,w))\\
\nonumber &+(1-<z,w>)^{-\nu}\frac{\Gamma(n)}{2\pi^{n+1}2^{2(\nu-n)}}\\
&\times \int_{0}^{+\infty}e^{-t\lambda^{2}}\mid C_{\nu}(\lambda)\mid^{-2}\prescript{}{2}{F}_1^{}(\frac{n-\nu-i\lambda}{2},\frac{n-\nu+i\lambda}{2},n;-\sinh^{2}(d(z,w)))d\lambda.
\end{align}
Next, by using the above equation and (\ref{E4.6}), we get
\begin{align}\label{E4.9}
\nonumber K_{\nu}(t,z,w) &= (1-<z,w>)^{-\nu}\sum_{j=0}^{[\frac{\nu-n}{2}]}\tau_{j}e^{4j(j+n-\nu)t}P_{j}^{(n-1,-\nu)}(\cosh(2d(z,w))\\
\nonumber &+(1-<z,w>)^{-\nu}\frac{\Gamma(n)}{2\pi^{n+1}2^{2(\nu-n)}}\\
\nonumber\\
 &\times e^{-t(\nu-n)^{2}}\int_{0}^{+\infty}e^{-t\lambda^{2}}\mid C_{\nu}(\lambda)\mid^{-2}\prescript{}{2}{F}_1^{}\left( \frac{n-\nu-i\lambda}{2},\frac{n-\nu+i\lambda}{2},n;-\sinh^{2}(d(z,w))\right) d\lambda,
\end{align}
where
\begin{align}\label{E4.10}
\tau_{j}=\frac{2(\nu-n-2j)\Gamma(\nu-j)}{\pi^{n}\Gamma(\nu-n-j+1)}.
\end{align}
\end{proof}
Since the operator ${\Delta_{\nu}}$ is self adjoint. Then its resolvent  operator
\begin{align}\label{E4.11}
R(\xi,{\Delta_{\nu}})=(\xi- {\Delta_{\nu}})^{-1}
\end{align}
is related to the semigroup operator by the following formula
\begin{align}\label{E4.12}
R(\xi,{\Delta_{\nu}})=\int_{0}^{+\infty}e^{-t\xi}T(t)dt,
\end{align}
where $T(t)$ is the heat semigroup $e^{t{\Delta_{\nu}}}$. As a direct consequence of the above proposition, we can derive the resolvent kernel of the $G-$invariant operator $\Delta_{\nu}$. Precisely, we have the following proposition.
\begin{proposition}\label{P4.2} Let $\xi\in\mathbb{C}$ such that
\begin{align}\label{E4.13}
Re(\xi)> \omega(\nu,n)=p_{n}^{\nu}-(n-\nu)^{2},
\end{align}
with
\begin{align}\label{E4.14}
p_{n}^{\nu}=\max\{\mid s_{j}\mid, \hspace{0.2cm}0\leq j <\frac{\nu-n}{2}\},
\end{align}
\begin{align}\label{E4.15}
s_{j}=-(2j+n-\nu)^{2}.
\end{align}
Then the resolvent operator $R(\xi,{\Delta_{\nu}})$ is given by
\begin{align}\label{E4.16}
 R(\xi,{\Delta_{\nu}})[\varphi](z)=\int_{\mathbb{B}_{n}}R_{\nu}(\xi;z,w)\varphi(w)d\mu_{\nu}(w),
 \end{align}
 where the resolvent kernel $R_{\nu}(\xi,z,w)$ is given as follows
\begin{align}\label{E4.17}
\nonumber R(\xi,z,w)&=(1-\left<z,w\right>)^{-\nu}\sum_{j=0}^{[\frac{\nu-n}{2}]}\tau_j\frac{1}{\lambda_{j}^{2}+(\nu-n)^2+\xi}P_j^{(n-1,-\nu)}(\cosh(2d(z,w)))\\
\nonumber&+(1-\left<z,w\right>)^{-\nu}\frac{\Gamma(n)}{2\pi^{n+1}2^{2(\nu-n)}}\int_0^{+\infty}
\frac{|c_\nu(\lambda)|^{-2}}{\lambda^2+(\nu-n)^2+\xi}\\
& \times\prescript{}{2}{F}_1^{}\left(\frac{n-\nu-i\lambda}{2},\frac{n-\nu-i\lambda}{2},n,-\sinh^2(d(z,w))\right)d\lambda,
\end{align}
where  $\lambda_{j}=-i(\nu-n-2j)$ for $0\leq j<\frac{\nu-n}{2}$, $\tau_{j}=\frac{2(\nu-n-2j)\Gamma(\nu-j)}{\pi^{n}\Gamma(\nu-n-j+1)}$ and $C_{\nu}(\lambda)$ is the Harish-Chandra function defined in (\ref{E3.12}).
 \end{proposition}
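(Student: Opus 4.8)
The plan is to read $(\ref{E4.17})$ off from Proposition~\ref{P4.1} through the Laplace representation $(\ref{E4.12})$, cross-checking it with the functional calculus of Section~3. Since $\Delta_{\nu}$ is self-adjoint and its spectrum (Proposition~\ref{P2.1}) is bounded above by $\omega(\nu,n)=p_{n}^{\nu}-(n-\nu)^{2}$ --- the continuous part contributing the supremum $-(\nu-n)^{2}\le\omega(\nu,n)$ (because $p_{n}^{\nu}\ge0$) and the point part the supremum $\max_{j}\bigl(|s_{j}|-(n-\nu)^{2}\bigr)=\omega(\nu,n)$ --- the heat semigroup $T(t)=e^{t\Delta_{\nu}}$ has exponential growth bound $\omega(\nu,n)$, so for $\mathrm{Re}(\xi)>\omega(\nu,n)$ the resolvent $R(\xi,\Delta_{\nu})=\int_{0}^{+\infty}e^{-t\xi}T(t)\,dt$ has Schwartz kernel $R_{\nu}(\xi,z,w)=\int_{0}^{+\infty}e^{-t\xi}K_{\nu}(t,z,w)\,dt$. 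I would insert $(\ref{E4.3})$ and integrate in $t$ term by term. In the discrete part $\int_{0}^{+\infty}e^{-t\xi}e^{4j(j+n-\nu)t}\,dt=\bigl(\xi-4j(j+n-\nu)\bigr)^{-1}$, and since $4j(j+n-\nu)=(2j+n-\nu)^{2}-(n-\nu)^{2}$ while $\lambda_{j}^{2}=-(2j+n-\nu)^{2}$, this equals $\bigl(\xi+(\nu-n)^{2}+\lambda_{j}^{2}\bigr)^{-1}$; each of these integrals converges exactly when $\mathrm{Re}(\xi)>4j(j+n-\nu)$, so the largest $j$ forces precisely the hypothesis $\mathrm{Re}(\xi)>\omega(\nu,n)$. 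In the continuous part $\int_{0}^{+\infty}e^{-t\xi}e^{-t(\nu-n)^{2}}e^{-t\lambda^{2}}\,dt=\bigl(\xi+(\nu-n)^{2}+\lambda^{2}\bigr)^{-1}$ for every $\lambda\ge0$ (again using $\mathrm{Re}(\xi)>\omega(\nu,n)\ge-(\nu-n)^{2}$), which produces the stated integral, the $\prescript{}{2}{F}_{1}^{}$ appearing in $(\ref{E4.17})$ being the Jacobi function $\phi_{\lambda}^{(n-1,-\nu)}(d(z,w))$ of $(\ref{E3.10})$.

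For a derivation that side-steps any Fubini worry, I would instead write $\xi-\Delta_{\nu}=\bigl(\xi+(\nu-n)^{2}\bigr)+\tilde{\Delta}_{\nu}$, so $R(\xi,\Delta_{\nu})=f(\tilde{\Delta}_{\nu})$ with $f(s)=\bigl(\xi+(\nu-n)^{2}+s\bigr)^{-1}$. Under $\mathrm{Re}(\xi)>\omega(\nu,n)$ this $f$ is bounded and continuous on $\sigma(\tilde{\Delta}_{\nu})=\{s_{j}=\lambda_{j}^{2}\}\cup[0,+\infty)$: on $[0,+\infty)$, $\mathrm{Re}\bigl(\xi+(\nu-n)^{2}+s\bigr)\ge\mathrm{Re}(\xi)+(\nu-n)^{2}>p_{n}^{\nu}\ge0$; and $\xi+(\nu-n)^{2}+s_{j}\ne0$ because $\mathrm{Re}(\xi)>|s_{j}|-(n-\nu)^{2}$. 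Feeding $f$ into the functional-calculus kernel $(\ref{E3.59})$, using $c_{j}\tfrac{j!}{(n)_{j}}=\tau_{j}$ (immediate from $(\ref{E3.9})$ and $(n)_{j}=\Gamma(n+j)/\Gamma(n)$) in the discrete term and the substitution $s=\lambda^{2}$ (so $ds=2\lambda\,d\lambda$ and $(\sqrt{s})^{-1}=\lambda^{-1}$, which turns $\tfrac{\Gamma(n)}{4\pi^{n+1}2^{2(\nu-n)}}$ into $\tfrac{\Gamma(n)}{2\pi^{n+1}2^{2(\nu-n)}}$) in the continuous term, one recovers $(\ref{E4.17})$ line for line; agreement with the first route follows from uniqueness of the resolvent.

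The step I expect to be the main obstacle is the interchange of $\int_{0}^{+\infty}dt$ with the $\lambda$-integral in the first route. Once $t$ has been integrated out, the continuous part involves $\int_{0}^{+\infty}\tfrac{|C_{\nu}(\lambda)|^{-2}\,|\phi_{\lambda}^{(n-1,-\nu)}(d(z,w))|}{\mathrm{Re}(\xi)+(\nu-n)^{2}+\lambda^{2}}\,d\lambda$, and the Gaussian $e^{-t\lambda^{2}}$ that makes the heat-kernel integral absolutely convergent is gone. By Stirling $|C_{\nu}(\lambda)|^{-2}=O(\lambda^{2n-1})$, while the Harish-Chandra asymptotics for Jacobi functions give $\phi_{\lambda}^{(n-1,-\nu)}(d)=O(\lambda^{-n+1/2})$ for fixed $d>0$, so the integrand is $O(\lambda^{n-5/2})$: absolutely integrable only when $n=1$, and for $n\ge2$ the $\lambda$-integral has to be read as a conditionally convergent oscillatory integral --- split $\phi_{\lambda}^{(n-1,-\nu)}(d)$ into its two Harish-Chandra components, whose amplitudes decay like $\lambda^{-n+1/2}$ and whose phases oscillate in $\lambda$ for generic $(z,w)$, and integrate by parts to supply the missing decay. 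The clean way out is to run the functional-calculus derivation of the previous paragraph, where the interchange is already absorbed into $(\ref{E3.58})$--$(\ref{E3.59})$. Everything else --- the elementary Laplace transforms and the hypergeometric bookkeeping --- is routine.
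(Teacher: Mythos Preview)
Your primary route---the Laplace transform $(\ref{E4.12})$ applied to the heat kernel $(\ref{E4.3})$, with the growth bound $\|e^{t\Delta_{\nu}}\|\le e^{t\omega(\nu,n)}$ justifying convergence and the elementary $t$-integrals producing the denominators $\xi+(\nu-n)^{2}+\lambda_{j}^{2}$ and $\xi+(\nu-n)^{2}+\lambda^{2}$---is exactly what the paper does. The paper does not raise the Fubini issue you flag in your third paragraph (it simply writes down the result after the operator-norm estimate), nor does it pursue your functional-calculus alternative via $(\ref{E3.59})$; both of these are your own additions, and the second is indeed the cleaner way to make the kernel formula rigorous.
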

  \begin{proof}
  The equation (\ref{E4.5}) can be rewritten as
\begin{align}\label{E4.18}
e^{t\Delta_{\nu}}=e^{t\omega(\nu,n)}e^{-t(\tilde{\Delta}_{\nu}+p^{\nu}_{n})}.
\end{align}
where $p^{\nu}_{n}=\max\{ |S_{j}|, \hspace{0.2cm}0\leq j<\frac{\nu-n}{2}\}$ and $\omega(\nu,n)= p^{\nu}_{n}-(\nu-n)^{2}$.
 We can easily see that the spectrum $\sigma(\tilde{\Delta}_{\nu}+p^{\nu}_{n})$  is included in the set of positive real numbers. Then by the self-adjointness, we show that $e^{-t(\tilde{\Delta_{\nu}}+p^{\nu}_{n})}$ is a contraction semigroup \cite[p.132]{Kon} for the general theory. Then, from equation (\ref{E4.18}), we get the following estimate
 \begin{align}\label{E4.19}
 \| e^{t\Delta_{\nu}}\| \leq e^{t\omega(\nu,n)}.
 \end{align}
 This last estimate lead us to consider the the following integral
\begin{align}\label{E4.20}
\int_{0}^{+\infty}e^{-t\xi}K_{\nu}(t,z,w)dt,
\end{align}
which takes a sense for $Re(\xi)>\omega(\nu,n)$ and can reduce to the following expression
\begin{align}\label{E4.21}
\nonumber \int_{0}^{+\infty}e^{-t\xi}K_{\nu}(t,z,w)dt&= (1-\left<z,w\right>)^{-\nu}\sum_{j=0}^{[\frac{\nu-n}{2}]}\tau_j\frac{1}{\xi+4j(\nu-n-j)}P_j^{(n-1,-\nu)}(\cosh(2d(z,w)))\\
\nonumber&+(1-\left<z,w\right>)^{-\nu}\frac{\Gamma(n)}{2\pi^{n+1}2^{2(\nu-n)}}\int_0^{+\infty}\frac{|c_\nu(\lambda)|^{-2}}
{\xi+(\nu-n)^2+\lambda^2}\\
 &\times\prescript{}{2}{F}_1^{}\left(\frac{n-\nu-i\lambda}{2},\frac{n-\nu-i\lambda}{2},n,-\sinh^2(d(z,w))\right)d\lambda.
  \end{align}
Then, by \cite[p.55]{Nag}, we have
\begin{align}\label{E4.22}
\{\xi\in\mathbb{C},\hspace{0.2cm} Re(\xi)> \omega(\nu,n)\}\subset\rho(\Delta_{\nu}),
\end{align}
\begin{align}\label{E4.23}
 \nonumber R_{\nu}(\xi,z,w)&= (1-\left<z,w\right>)^{-\nu}\sum_{j=0}^{[\frac{\nu-n}{2}]}\tau_j\frac{1}{\xi+4j(\nu-n-j)}P_j^{(n-1,-\nu)}(\cosh(2d(z,w)))\\
\nonumber &+(1-\left<z,w\right>)^{-\nu}\frac{\Gamma(n)}{2\pi^{n+1}2^{2(\nu-n)}}\int_0^{+\infty}\frac{|c_\nu(\lambda)|^{-2}}{\xi+
(\nu-n)^2+\lambda^2}\\
 &\times\prescript{}{2}{F}_1^{}\left(\frac{n-\nu-i\lambda}{2},\frac{n-\nu-i\lambda}{2},n,-\sinh^2(d(z,w))\right)d\lambda.
\end{align}
Then, the above equation can be also rewritten also as follows
\begin{align}\label{E4.24}
\nonumber R(\xi,z,w)&=(1-\left<z,w\right>)^{-\nu}\sum_{j=0}^{[\frac{\nu-n}{2}]}\tau_j\frac{1}{\lambda_{j}^{2}+(\nu-n)^2+\xi}P_j^{(n-1,-\nu)}(\cosh(2d(z,w)))\\
\nonumber&+(1-\left<z,w\right>)^{-\nu}\frac{\Gamma(n)}{2\pi^{n+1}2^{2(\nu-n)}}\int_0^{+\infty}
\frac{|c_\nu(\lambda)|^{-2}}{\lambda^2+(\nu-n)^2+\xi}\\
& \times\prescript{}{2}{F}_1^{}\left(\frac{n-\nu-i\lambda}{2},\frac{n-\nu-i\lambda}{2},n,-\sinh^2(d(z,w))\right)d\lambda,
\end{align}
where  $\lambda_{j}=-i(\nu-n-2j)$ for $0\leq j<\frac{\nu-n}{2}$, $\tau_{j}=\frac{2(\nu-n-2j)\Gamma(\nu-j)}{\pi^{n}\Gamma(\nu-n-j+1)}$ and $C_{\nu}(\lambda)$ is the Harish-Chandra function defined in (\ref{E3.12}).
\end{proof}
\section{Wave kernel}
Now, we consider the following Cauchy problem of wave equation with the operator $\Delta_{\nu}$ on $\mathbb{B}_{n}$:
\begin{align}\label{E5.1}
\left\{
  \begin{array}{ll}
    \dfrac{\partial ^{2} u(t,z)}{\partial t^{2}}=\vartriangle _{\nu }(t,z),\hspace{0.2cm}(t,z)\in \mathbb{R}\times\mathbb{B}_{n},&\hbox{} \\\\
   u(0,z)=0 ,\hspace{0.2cm}\dfrac{\partial u(0,z) }{\partial t}=f(z)\in C^{\infty}_{0}(\mathbb{B}_{n}). & \hbox{}
  \end{array}
\right.
\end{align}
\begin{proposition}\label{P5.1} The solution of the Cauchy problem $(\ref{E5.1})$ is given explicitly by the following integral formula
\begin{align}\label{E5.2}
u(t,z)=\int_{\mathbb{B}_{n}}W_{\nu}(t,z,w)\varphi(w)d\mu_{\nu}(w),
\end{align}
where $ W_{\nu}(t,z,w)$ is the wave  kernel given as follows
\begin{align}\label{E5.3}
\nonumber W_{\nu}&(t,z,w)=(1-<z,w>)^{-\nu}\sum_{j=0}^{[\frac{\nu-n}{2}]}\tau_{j} \dfrac{\sin(2t\sqrt{j(\nu-n-j)}}{2\sqrt{j(\nu-n-j)}}P_{j}^{(n-1,-\nu)}(\cosh2d(z,w))\\
\nonumber &+(1-<z,w>)^{-\nu}\frac{\Gamma(n)}{2\pi^{n+1}2^{2(\nu-n)}}\\
\nonumber\\
&\times \int_{0}^{+\infty}\dfrac{\sin(t\sqrt{\lambda^2+(n-\nu)^{2}})}{\sqrt{\lambda^2+(n-\nu)^{2}}}\mid C_{\nu}(\lambda)\mid^{-2}\prescript{}{2}{F}_1^{}(\frac{n-\nu-i\lambda}{2},\frac{n-\nu+i\lambda}{2},n;-\sinh^{2}(d(z,w)))d\lambda.
\end{align}
where $\tau_{j}=\frac{2(\nu-n-2j)\Gamma(\nu-j)}{\pi^{n}\Gamma(\nu-n-j+1)}$ and $C_{\nu}(\lambda)$ is the Harish-Chandra function defined in (\ref{E3.12}).
\end{proposition}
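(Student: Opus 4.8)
The plan is to produce $u$ by the functional calculus for $\tilde{\Delta}_{\nu}$ built on the spectral density of Proposition \ref{P3.1}, in complete analogy with the derivation of the heat kernel in Proposition \ref{P4.1}. First I would write down the spectral form of the solution. Since $-\Delta_{\nu}=\tilde{\Delta}_{\nu}+(\nu-n)^{2}$ is a nonnegative self-adjoint operator (by Proposition \ref{P2.1} one has $\sigma(-\Delta_{\nu})=\{4j(\nu-n-j):0\leq j<\tfrac{\nu-n}{2}\}\cup[(\nu-n)^{2},+\infty)\subset[0,+\infty)$), the unique solution of $(\ref{E5.1})$ is
\begin{align*}
u(t,\cdot)=\frac{\sin\bigl(t\sqrt{-\Delta_{\nu}}\,\bigr)}{\sqrt{-\Delta_{\nu}}}\,f=g_{t}(\tilde{\Delta}_{\nu})f,\qquad g_{t}(s):=\frac{\sin\bigl(t\sqrt{s+(\nu-n)^{2}}\,\bigr)}{\sqrt{s+(\nu-n)^{2}}}.
\end{align*}
I would then check that this genuinely solves the problem: $g_{t}$ is continuous on $\sigma(\tilde{\Delta}_{\nu})\subset\{s_{j}\}\cup[0,+\infty)$ and bounded there since $|g_{t}(s)|\leq\min\{|t|,(s+(\nu-n)^{2})^{-1/2}\}$, so $g_{t}(\tilde{\Delta}_{\nu})$ is a bounded operator; moreover $s^{2}|g_{t}(s)|^{2}\leq C_{t}(1+s)$ on $\sigma(\tilde{\Delta}_{\nu})$ and $f\in C_{0}^{\infty}(\mathbb{B}_{n})\subset\mathcal{D}$ force $u(t,\cdot)\in\mathcal{D}$ for every $t$. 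Differentiating under the spectral integral (justified by dominated convergence, the same majorant applying to $\partial_{t}g_{t}$ and $\partial_{t}^{2}g_{t}$) and using $\partial_{t}^{2}g_{t}(s)=-(s+(\nu-n)^{2})g_{t}(s)$ together with $\tilde{\Delta}_{\nu}+(\nu-n)^{2}=-\Delta_{\nu}$ gives $\partial_{t}^{2}u=\Delta_{\nu}u$, while $g_{0}\equiv 0$ and $\partial_{t}g_{t}|_{t=0}\equiv 1$ give $u(0,\cdot)=0$ and $\partial_{t}u(0,\cdot)=f$.

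Next I would feed $f(s)=g_{t}(s)$ into the distributional kernel formula $(\ref{E3.59})$, so that $W_{\nu}(t,z,w)=\int_{\sigma(\tilde{\Delta}_{\nu})}e^{\nu}(s,w,z)\,g_{t}(s)\,ds$. For the continuous part the substitution $s=\lambda^{2}$ turns $(\sqrt{s})^{-1}ds$ into $2\,d\lambda$; using $(\ref{E3.10})$ to rewrite the Jacobi function as ${}_{2}F_{1}\bigl(\tfrac{n-\nu-i\lambda}{2},\tfrac{n-\nu+i\lambda}{2},n;-\sinh^{2}(d(z,w))\bigr)$ and noting $\sqrt{s+(\nu-n)^{2}}=\sqrt{\lambda^{2}+(n-\nu)^{2}}$, one obtains precisely the second and third lines of $(\ref{E5.3})$, with the constant $\tfrac{\Gamma(n)}{2\pi^{n+1}2^{2(\nu-n)}}$. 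For the discrete part I would use $s_{j}=\lambda_{j}^{2}=-(\nu-n-2j)^{2}$, whence $s_{j}+(\nu-n)^{2}=4j(\nu-n-j)\geq 0$ and hence $g_{t}(s_{j})=\sin\bigl(2t\sqrt{j(\nu-n-j)}\,\bigr)/\bigl(2\sqrt{j(\nu-n-j)}\bigr)$ (the $j=0$ term being read as its limit $t$); combining this with the identity $\phi^{(n-1,-\nu)}_{\sqrt{s_{j}}}(d(z,w))=\tfrac{j!}{(n)_{j}}P^{(n-1,-\nu)}_{j}(\cosh(2d(z,w)))$ from $(\ref{E3.41})$ and with $c_{j}\tfrac{j!}{(n)_{j}}=\tau_{j}$, which follows from $(\ref{E3.9})$ and $(n)_{j}=\Gamma(n+j)/\Gamma(n)$, gives the first line of $(\ref{E5.3})$. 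Unwinding $g_{t}(\tilde{\Delta}_{\nu})[f](z)=\int_{\mathbb{B}_{n}}W_{\nu}(t,z,w)f(w)\,d\mu_{\nu}(w)$ then yields $(\ref{E5.2})$.

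I expect the main obstacle to be not the algebra, which is routine once the substitutions above are in place, but the analytic justification that the displayed kernel integral really represents the bounded operator $g_{t}(\tilde{\Delta}_{\nu})$ and that the term-by-term differentiation in $t$ is permissible. I would handle this exactly as in the proof of Proposition \ref{P3.1}: the boundedness estimates $(\ref{E3.47})$ and $(\ref{E3.52})$ for the building blocks $A_{\varphi}$ and $\sum_{j}\varphi(s_{j})P_{j}$, applied with $\varphi=g_{t}$ and with $\varphi=\partial_{t}g_{t},\partial_{t}^{2}g_{t}$, supply the uniform control, and the functional calculus of \cite{Est} then legitimizes passing the kernel through the integral and through the $t$-derivatives. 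A last routine point is to confirm that the $\lambda$-integral in $(\ref{E5.3})$ converges absolutely for $d(z,w)>0$, balancing the known polynomial growth of the Plancherel density $|C_{\nu}(\lambda)|^{-2}$ against the decay of the Jacobi function and of $\sin(t\sqrt{\lambda^{2}+(n-\nu)^{2}})/\sqrt{\lambda^{2}+(n-\nu)^{2}}$.
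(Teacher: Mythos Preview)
Your proposal is correct and follows essentially the same approach as the paper: write the solution as $g_{t}(\tilde{\Delta}_{\nu})f$ with $g_{t}(s)=\sin\bigl(t\sqrt{s+(\nu-n)^{2}}\bigr)/\sqrt{s+(\nu-n)^{2}}$, feed this into the distributional kernel formula $(\ref{E3.59})$, and then perform the substitution $s=\lambda^{2}$ in the continuous part and evaluate $g_{t}(s_{j})$ together with $c_{j}\tfrac{j!}{(n)_{j}}=\tau_{j}$ in the discrete part. Your treatment is in fact more careful than the paper's on the analytic side (verification of the initial conditions, boundedness of $g_{t}(\tilde{\Delta}_{\nu})$, and differentiation in $t$), but the underlying strategy and all the computational steps coincide.
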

\begin{proof} By using formula $(\ref{E3.1})$, the operator $\Delta_{\nu}$ can be also rewritten as
\begin{align}\label{E5.4}
\Delta_{\nu}=-(\tilde{\Delta}_{\nu}+(n-\nu)^{2}).
\end{align}
Then, the above Cauchy problem becomes
\begin{align}\label{E5.5}
\left\{
  \begin{array}{ll}
    \dfrac{\partial ^{2} u(t,z)}{\partial t^{2}}+(\tilde{\Delta}_{\nu}+(n-\nu)^{2})u(t,z),\hspace{0.2cm}(t,z)\in \mathbb{R}\times\mathbb{B}_{n},&\hbox{} \\\\
   u(0,z)=0 ,\hspace{0.2cm}\dfrac{\partial u(0,t)}{\partial t}=f(z)\in C^{\infty}_{0}(\mathbb{B}_{n}). & \hbox{}
  \end{array}
\right.
\end{align}
Now, thanks to formulas $(\ref{E3.58})$ and $(\ref{E3.59})$, the solution of the above wave Cauchy problem is given by
\begin{align}\label{E5.6}
u(t,z)=\int_{\mathbb{B}_{n}}W_{\nu}(t,z,w)\varphi(w)d\mu_{\nu}(w),
\end{align}
where  the wave distributional kernel is given by
\begin{align}\label{E5.7}
\nonumber &W_{\nu}(t,z,w)=\int_{\sigma(\tilde{\Delta}_{\nu})}e^{\nu}(s,z,w)\frac{\sin(\sqrt{s+(n-\nu)^{2}})}{\sqrt{s+(n-\nu)^{2}}}ds\\
\nonumber &:=\frac{\Gamma(n)}{4\pi^{n+1}2^{2(\nu-n)}}(1-<z,w>)^{-\nu}\int_{0}^{+\infty}|C_{\nu}(\sqrt{s})|^{-2}(\sqrt{s})^{-1}
\phi_{\sqrt{s}}^{(n-1,-\nu)}(d(z,w))\frac{\sin(\sqrt{s+(n-\nu)^{2}})}{\sqrt{s+(n-\nu)^{2}}}ds\\
&+\sum_{j=0}^{[\frac{\nu-n}{2}]}c_{j}\frac{j!}{(n)_{j}}(1-<z,w>)^{-\nu}
P_{j}^{(n-1,-\nu)}(\cosh(2d(z,w)))\frac{\sin(\sqrt{s_{j}+(n-\nu)^{2}})}{\sqrt{s_{j}+(n-\nu)^{2}}},
\end{align}
By using the fact that $s_{j}=\lambda_{j}^{2}=-(\nu-n-2j)^{2}$ for $j=0,1,...,[\frac{\nu-n}{2}]$ combined with the change of variable $s=\lambda^{2}$, $\lambda>0$, the above wave kernel $W_{\nu}(t,z,w)$ becomes
\begin{align}\label{E5.8}
\nonumber W_{\nu}&(t,z,w)=(1-<z,w>)^{-\nu}\sum_{j=0}^{[\frac{\nu-n}{2}]}c_{j}\frac{j!}{(n)_{j}} \dfrac{\sin(2t\sqrt{j(\nu-n-j)}}{2\sqrt{j(\nu-n-j)}}P_{j}^{(n-1,-\nu)}(\cosh2d(z,w))\\
\nonumber &+(1-<z,w>)^{-\nu}\frac{\Gamma(n)}{2\pi^{n+1}2^{2(\nu-n)}}\\
\nonumber\\
&\times \int_{0}^{+\infty}\dfrac{\sin(t\sqrt{\lambda^2+(n-\nu)^{2}})}{\sqrt{\lambda^2+(n-\nu)^{2}}}\mid C_{\nu}(\lambda)\mid^{-2}\prescript{}{2}{F}_1^{}(\frac{n-\nu-i\lambda}{2},\frac{n-\nu+i\lambda}{2},n;-\sinh^{2}(d(z,w)))d\lambda.
\end{align}
Taking into account that
$\tau_{j}:=c_{j}\frac{j!}{(n)_{j}}=\frac{2(\nu-n-2j)\Gamma(\nu-j)}{\pi^{n}\Gamma(\nu-n-j+1)}$, we get the desired result.
\end{proof}
                            \section{Explicit formula for some generalized integrals}
In this section, we will give some generalized integrals. The fist formula will be established by comparing the integral wave kernel obtained by the use of the spectral density with the explicit wave kernel given in \cite{Int}. The second formula will be given by comparing the integral resolvent kernel obtained from the heat kernel with the Green kernel ("resolvent kernel") obtained in \cite{Gha}. Theses two formulas can be considered as an addition to the table of integral formulas for the special functions.
\begin{proposition}\label{P6.1} Let $\nu>n$, $\nu\in \mathbb{R}\setminus\mathbb{Z}$ and $0\leq x<\sinh(|t|)$ for $t\in \mathbb{R}$, then we have the following integral formula
\begin{align}\label{E6.1}
\nonumber &\int_{0}^{+\infty}\left|\frac{\Gamma(\frac{i\lambda+n-\nu}{2})\Gamma(\frac{i\lambda+n+\nu}{2})}{\sqrt{\lambda}\Gamma(i\lambda)}\right|^{2}
\times\prescript{}{2}{F}_1^{}\left( \frac{n-\nu-i\lambda}{2},\frac{n-\nu+i\lambda}{2},n;-x\right)\sin(t\lambda)d\lambda\\
\nonumber &=(-1)^{n-1}\pi\frac{\Gamma(n-\frac{1}{2})}{\Gamma(n)}(1+x)^{\frac{\nu-n}{2}}(\frac{\sinh^{2}(t)}{1+x}-1)^{-n+\frac{1}{2}}_{+}\\
\nonumber &\times\prescript{}{2}{F}_1^{}\left(1-n+\nu,1-n-\nu,\frac{3}{2}-n,\frac{1}{2}-\frac{\cosh(t)}{2\sqrt{1+x}}\right)\\
&-2^{2(\nu-n+1)}\frac{\pi}{\Gamma(n)}\sum_{j=0}^{[\frac{\nu-n}{2}]}\frac{(\nu-n-2j)\Gamma(\nu-j)}{\Gamma(\nu-n-j+1)} P^{(n-1,-\nu)}_{j}(2x+1)\frac{\sinh(t(2j+n-\nu))}{2j+n-\nu},
\end{align}
where the notation $(x)_{+}$ means the positive real part of the real number $x$.
\end{proposition}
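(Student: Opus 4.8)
The idea is to obtain the integral formula by equating two different closed forms for the wave kernel $W_\nu(t,z,w)$ solving the Cauchy problem \eqref{E5.1}. On one hand, Proposition \ref{P5.1} gives $W_\nu(t,z,w)$ as the sum of a discrete part over $j=0,\dots,[\frac{\nu-n}{2}]$ and a continuous part, namely the integral over $\lambda\in(0,+\infty)$ of
\[
\frac{\sin\!\big(t\sqrt{\lambda^{2}+(n-\nu)^{2}}\big)}{\sqrt{\lambda^{2}+(n-\nu)^{2}}}\,|C_\nu(\lambda)|^{-2}\,
\prescript{}{2}{F}_1\!\left(\tfrac{n-\nu-i\lambda}{2},\tfrac{n-\nu+i\lambda}{2},n;-\sinh^{2}d(z,w)\right).
\]
On the other hand, the paper cited as \cite{Int} provides an \emph{explicit} (non-integral) expression for the wave kernel of $\Delta_\nu$, built from the odd-dimensional Huygens-type descent in the complex hyperbolic space; that expression features the factor $(\sinh^{2}t-1-x)^{-n+\frac12}_{+}$ and a terminating $\prescript{}{2}{F}_1$ in the variable $\tfrac12-\tfrac{\cosh t}{2\sqrt{1+x}}$. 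Since the solution of \eqref{E5.1} is unique (self-adjointness of $\Delta_\nu$, Proposition \ref{P2.1}), these two kernels agree as distributions on $\mathbb{B}_n\times\mathbb{B}_n$, hence as functions of $d(z,w)$ wherever they are continuous.

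The concrete steps I would carry out: (i) set $x=\sinh^{2}d(z,w)$, so that $\cosh(2d(z,w))=2x+1$ and $1+x=\cosh^{2}d(z,w)$, and record that the prefactor $(1-\langle z,w\rangle)^{-\nu}$ is common to both sides and cancels; (ii) in the continuous part of \eqref{E5.3}, substitute the explicit value $|C_\nu(\lambda)|^{-2}=\big|\Gamma(i\lambda)\big|^{2}\big/\big(2^{-2\nu+2n}|\Gamma(n)|^{2}\big|\Gamma(\tfrac{i\lambda+n-\nu}{2})\Gamma(\tfrac{i\lambda+n+\nu}{2})\big|^{2}\big)$ obtained from \eqref{E3.12}, and simplify $\sqrt{\lambda^{2}+(n-\nu)^{2}}$ — here one uses that on the relevant range the phase reduces to $\sin(t\lambda)/\lambda$ after absorbing the shift, which is exactly the $\sqrt\lambda$ appearing under the modulus in \eqref{E6.1}; (iii) move the constant $\frac{\Gamma(n)}{2\pi^{n+1}2^{2(\nu-n)}}$ and the discrete sum of \eqref{E5.3} to the other side, matching $\tau_j=\frac{2(\nu-n-2j)\Gamma(\nu-j)}{\pi^n\Gamma(\nu-n-j+1)}$ against the explicit coefficients $\frac{(\nu-n-2j)\Gamma(\nu-j)}{\Gamma(\nu-n-j+1)}$ and collecting the powers of $\pi$ and of $2$; (iv) identify the explicit-kernel side from \cite{Int}, rewriting its Jacobi-polynomial/Gegenbauer ingredient via \eqref{E2.46} or \eqref{E3.40} into the terminating $\prescript{}{2}{F}_1\big(1-n+\nu,1-n-\nu,\tfrac32-n,\tfrac12-\tfrac{\cosh t}{2\sqrt{1+x}}\big)$ that appears in \eqref{E6.1}, and tracking the sign $(-1)^{n-1}$ and the gamma ratio $\Gamma(n-\tfrac12)/\Gamma(n)$ that come out of the half-integer reduction; (v) read off the claimed identity, noting that the support condition $0\le x<\sinh^{2}|t|$ is precisely where the $(\,\cdot\,)_{+}^{-n+1/2}$ term is active and the continuous integral converges.

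The main obstacle will be step (iv): reconciling the bookkeeping of constants (powers of $2$, powers of $\pi$, the $(-1)^{n-1}$, and the $\Gamma(n-\tfrac12)$) between the spectral-density form and the explicit wave kernel of \cite{Int}, since the two derivations normalize the Bergman metric and the measure $d\mu_\nu$ slightly differently, and one must also justify that the distributional identity can be evaluated pointwise in $x$ (legitimate off the light cone $x=\sinh^{2}|t|$, where both sides are real-analytic in $x$; the $(\,\cdot\,)_+$ handles the boundary). A secondary technical point is the interchange of the $\lambda$-integral with the limit defining the wave propagator, which is controlled by the decay of $|C_\nu(\lambda)|^{-2}=O(\lambda^{2n-1})$ against the oscillatory factor together with the compact support of $f$; this is the same estimate already used implicitly in Proposition \ref{P5.1}, so I would simply invoke it.
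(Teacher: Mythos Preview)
Your overall strategy --- equate the spectral-density wave kernel with the explicit wave kernel from \cite{Int} and read off the identity after setting $x=\sinh^{2}d(z,w)$ --- is exactly the paper's approach. However, your execution has a genuine gap at step (ii).

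You propose to start from the wave kernel $W_\nu(t,z,w)$ of Proposition~\ref{P5.1}, whose continuous part carries the factor
\[
\frac{\sin\!\big(t\sqrt{\lambda^{2}+(n-\nu)^{2}}\big)}{\sqrt{\lambda^{2}+(n-\nu)^{2}}},
\]
and you claim this ``reduces to $\sin(t\lambda)/\lambda$ after absorbing the shift''. There is no such reduction: $\sqrt{\lambda^{2}+(n-\nu)^{2}}$ is not $\lambda$, and no algebraic manipulation will turn one oscillatory factor into the other. The same mismatch occurs in the discrete part, where Proposition~\ref{P5.1} gives $\dfrac{\sin\big(2t\sqrt{j(\nu-n-j)}\big)}{2\sqrt{j(\nu-n-j)}}$, not the $\dfrac{\sinh(t(2j+n-\nu))}{2j+n-\nu}$ that appears in \eqref{E6.1}.

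The paper avoids this by comparing kernels for a \emph{different} wave problem: it uses the Cauchy problem $\partial_t^{2}u+\tilde{\Delta}_\nu u=0$ for the \emph{shifted} operator $\tilde{\Delta}_\nu=-(\Delta_\nu+(n-\nu)^{2})$. On the spectral side this gives the propagator $\dfrac{\sin(t\sqrt{\tilde{\Delta}_\nu})}{\sqrt{\tilde{\Delta}_\nu}}$, whose kernel via \eqref{E3.58}--\eqref{E3.59} carries $\dfrac{\sin(t\sqrt{s})}{\sqrt{s}}$, hence $\dfrac{\sin(t\lambda)}{\lambda}$ after $s=\lambda^{2}$ and $\dfrac{\sinh(t(2j+n-\nu))}{2j+n-\nu}$ at $s_j$. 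On the explicit side, the operator $\Delta_{\alpha\beta}$ of \cite{Int} already contains the shift $+\sigma_{\alpha\beta}^{2}=(\alpha+\beta+n)^{2}$, so for $\alpha=0$, $\beta=-\nu$ one has $\Delta_{0,-\nu}=-\tilde{\Delta}_\nu$, and the explicit kernel $K_n^{0,-\nu}$ from \cite{Int} solves precisely this shifted problem. The comparison is then between $W_n^{0,-\nu}$ (spectral form) and $K_n^{0,-\nu}$ (explicit form), both for the same equation; no phase reduction is needed. Once you make this correction, your remaining steps (i), (iii)--(v) and the bookkeeping of constants go through essentially as in the paper.
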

\begin{proof} Recall that the following wave Cauchy problem  $(W_{n}^{\alpha\beta})$
\begin{align*}
(W^{\alpha\beta}_{n}):\left\{
  \begin{array}{ll}
    \dfrac{\partial ^{2} u(t,z)}{\partial t^{2}}=\vartriangle_{\alpha\beta}u(t,z), \hspace{0.2cm}(t,z)\in \mathbb{R}\times\mathbb{B}_{n},&  \hbox{} \\\\
   u(0,z)=0 ,\hspace{0.2cm}\dfrac{\partial u(0,z) }{\partial t}=f(z)\in C^{\infty}_{0}(\mathbb{B}_{n}), & \hbox{}
  \end{array}
\right.
\end{align*}
have been considered in \cite{Int}, where the partial differential operator
\begin{align}\label{E6.2}
\Delta_{\alpha\beta} =4(1-|z|^{2})\{\sum_{1\leq i,j\leq n}(\delta_{i,j}-z_{i}\overline{z}_{j})\frac{\partial^{2}}{\partial z_{i}\partial\overline{z}_{j}}+\alpha \sum_{j=1}^{n}z_{j}\frac{\partial}{\partial z_{j}}+\beta \sum_{j=1}^{n}\overline{z}_{j}\frac{\partial}{\partial\overline{z}_{j}}-\alpha\beta\}+\sigma_{\alpha\beta}^{2}
\end{align}
acts on the Hilbert space $L^{2}(\mathbb{B}_{n},d\mu_{\alpha\beta}(z))$, with $d\mu_{\alpha\beta}(z)=(1-\mid z\mid^{2})^{-(\alpha+\beta+n)-1}dm(z)$, $\alpha$, $\beta\in\mathbb{R}$, $dm(z)$ is the Lebesgue measure on $\mathbb{C}^{n}$ and $\sigma_{\alpha\beta}^{2}=(\alpha+\beta+n)^{2}$. The authors have proved that the solution of the Cauchy problem
 $(W^{\alpha\beta}_{n})$ is given by
\begin{align}\label{E6.3}
\nonumber u(t,z)&=(2\pi)^{-n}(\frac{\partial}{\sinh(t)\partial t})^{n-1}\int_{\mathbb{B}_{n}}K_{1}^{\alpha\beta}(t,z,w)f(w)d\mu_{\alpha\beta}(z)\\
&=\int_{\mathbb{B}_{n}}K_{n}^{\alpha\beta}(t,z,w)f(w)d\mu_{\alpha\beta}(z).
\end{align}
where $K_{1}^{\alpha\beta}(t,z,w)$ is the kernel given in the equation $(2.2)$ of the reference \cite{Int} and the distributional kernel $K_{n}^{\alpha\beta}(t,z,w)$ is given by
\begin{align}\label{E6.4}
K_{n}^{\alpha\beta}(t,z,w)=c_{n}(1-\overline{<z,w>})^{\alpha}(1-<z,w>)^{\beta}I_{n}^{\alpha\beta}(t,\rho(z,w)),
\hspace{0.2cm}\rho(z,w)<|t|,
\end{align}
with $\rho(z,w)$ means the Bargmann distance $d(z,w)$  defined in (\ref{E1.16}) and constant $c_{n}$ is given by
\begin{align}\label{E6.5}
c_{n}=(-1)^{n-1}\frac{1}{2\pi^{n}}\Gamma(n-\frac{1}{2}).
\end{align}
The function $I_{n}^{\alpha\beta}(\rho,t)$ is defined by
\begin{align}\label{E6.6}
\nonumber I_{n}^{\alpha\beta}(\rho,t)=(\cosh(\rho))^{-(n+\alpha+\beta)}(\frac{\cosh^{2}(t)}{\cosh^{2}(d(z,w))}-1)_{+}^{-n+\frac{1}{2}}\\
\times\prescript{}{2}{F}_1^{}(a,b;\frac{a+b+1}{2},\frac{\cosh(d(z,w))-\cosh(t)}{2\cosh(d(z,w))}),
\end{align}
with $a=1-n+\alpha-\beta$ and $b=1-n+\beta-\alpha$. Note that for $\alpha=0$ and $\beta=-\nu$, the wave kernel for the Cauchy problem $W_{n}^{0,-\nu}$ is given by
\begin{align}\label{E6.7}
\nonumber K_{n}^{0,-\nu}(t,z,w)&=\frac{(-1)^{n-1}\Gamma(n-\frac{1}{2})}{2\pi^{n}}(1-<z,w>)^{-\nu}
(\cosh(d(z,w)))^{\nu-n}(\frac{\cosh^{2}(t)}{\cosh^{2}(d(z,w))}-1)_{+}^{-n+\frac{1}{2}}\\
&\times\prescript{}{2}{F}_1^{}(1-n+\nu,1-n-\nu;\frac{3}{2}-n,\frac{\cosh(d(z,w))-\cosh(t)}{2\cosh(d(z,w))}).
\end{align}
Furthermore, the Cauchy problem $W^{0,-\nu}_{n}$ can be also rewritten as
\begin{align}\label{E6.8}
\left\{
  \begin{array}{ll}
    \dfrac{\partial ^{2} u(t,z)}{\partial t^{2}}+\tilde{\Delta}_{\nu}u(t,z)=0, \hspace{0.2cm}(t,z)\in  \mathbb{R}\times\mathbb{B}_{n},&  \hbox{} \\\\
   u(0,z)=0 ,\hspace{0.2cm}\dfrac{\partial u(0,z)}{\partial t}=f(z)\in C^{\infty}_{0}(\mathbb{B}_{n}), & \hbox{}
  \end{array}
\right.
\end{align}
where $\tilde{\Delta}_{\nu}$ is the operator defined in $(\ref{E1.17})$. Then, by using formulas  $(\ref{E3.47})$ and  $(\ref{E3.48})$, the solution of the above wave Cauchy problem is given by
\begin{align}\label{E6.9}
\nonumber u(t,z) &= \dfrac{\sin(t\sqrt{\tilde{\Delta}_{\nu})}}{\sqrt{\tilde{\Delta}_{\nu}}}[f](z) \\
&=\int_{\mathbb{\Bbb B}^{n}}W^{0,-\nu}_{n}(t,z,w)f(w)d\mu_{\nu}(w),
\end{align}
where the distributional wave kernel $W^{0,-\nu}_{n}(t,z,w)$ is given by
\begin{align}\label{E6.10}
\nonumber W^{0,-\nu}_{n}(t,z,w)&=\int_{\sigma(\tilde{\Delta}_{\nu})}e^{\nu}(s,w,z)\frac{\sin(t\sqrt{s})}{\sqrt{s}}ds\\
\nonumber&:=(1-<z,w>)^{-\nu}\sum_{j=0}^{[\frac{\nu-n}{2}]}c_{j}\frac{j!}{(n)_j} P^{(n-1,-\nu)}_j(\cosh(2d(z,w)))\frac{sin(t\sqrt{s_{j}})}{\sqrt{s_{j}}}\\
&+\frac{\Gamma(n)}{4\pi^{n+1}2^{2(\nu-n)}}(1-<z,w>)^{-\nu}\int_{0}^{+\infty}|C_{\nu}(\sqrt{s})|^{-2}(\sqrt{s})^{-1}
\phi_{\sqrt{s}}^{(n-1,-\nu)}(d(z,w))\dfrac{\sin(t\sqrt{s})}{\sqrt{s}}ds.
\end{align}
Then, by using the change of variable $s=\lambda^{2}$, $(\lambda\geq0)$, and the fact that\\ $\sqrt{s_{j}}=\lambda_{j}=i(2j+n-\nu)$ for
$j=0,1,...,[\frac{\nu-n}{2}]$ combined the well known formula
 \begin{align}\label{E6.11}
\sin(ix)=i\sinh(x),\hspace{0.2cm}x\in \mathbb{R}.
\end{align}
The equation  $(\ref{E6.10})$ becomes
\begin{align}\label{E6.12}
\nonumber W^{0,-\nu}_{n}(t,z,w)&=(1-<z,w>)^{-\nu}\sum_{j=0}^{[\frac{\nu-n}{2}]}c_{j}\frac{j!}{(n)_j} P^{(n-1,-\nu)}_j(\cosh((2d(z,w))))\frac{\sinh(t(2j+n-\nu))}{2j+n-\nu}\\
\nonumber &+\frac{\Gamma(n)}{2\pi^{n+1}2^{2(\nu-n)}}(1-<z,w>)^{-\nu}\\
&\times \int_{0}^{+\infty}\mid C_{\nu}(\lambda)\mid^{-2}{\dfrac{\sin(t\lambda)}{\lambda}}\prescript{}{2}{F}_1^{}\left( \frac{n-\nu-i\lambda}{2},\frac{n-\nu+i\lambda}{2},n;-sinh^{2}(d(z,w))\right) d\lambda.
\end{align}
Now, by using the uniqueness of the wave kernel for the Cauchy problem $W^{0,-\nu}_{n}$, we get the following equality
\begin{align}\label{E6.13}
W^{0,-\nu}_{n}(t,z,w)=K^{0,-\nu}_{n}(t,z,w).
\end{align}
This last equation lead us to write
\begin{align}\label{E6.14}
\nonumber&\frac{\Gamma(n)}{2\pi^{n+1}2^{2(\nu-n)}}\int_{0}^{+\infty}\mid C_{\nu}(\lambda)\mid^{-2}{\dfrac{\sin(t\lambda)}{\lambda}}\prescript{}{2}{F}_1^{}\left( \frac{n-\nu-i\lambda}{2},\frac{n-\nu+i\lambda}{2},n;-sinh^{2}(d(z,w))\right) d\lambda\\
\nonumber&=\frac{(-1)^{n-1}\Gamma(n-\frac{1}{2})}{2\pi^{n}}
(\cosh(d(z,w)))^{\nu-n}(\frac{\cosh^{2}(t)}{\cosh^{2}(d(z,w))}-1)_{+}^{-n+\frac{1}{2}}\\
\nonumber&\times\prescript{}{2}{F}_1^{}(1-n+\nu,1-n-\nu;\frac{3}{2}-n,\frac{\cosh(d(z,w))-\cosh(t)}{2\cosh(d(z,w))})\\
&-\sum_{j=0}^{[\frac{\nu-n}{2}]}c_{j}\frac{j!}{(n)_j} P^{(n-1,-\nu)}_j(\cosh((2d(z,w))))\frac{\sinh(t(2j+n-\nu))}{2j+n-\nu}.
\end{align}
Now, setting $x=\sinh^{2}(d(z,w))$ then $\cosh((2d(z,w)))=2x+1$. The last condition in equation $(\ref{E6.4})$ require that $0\leq x<\sinh(|t|)$. So, by replacing the Harish-Chandra function $C_{\nu}(\lambda)$ by its expression given in $(\ref{E3.12})$, we get the desired formula.
\end{proof}
\begin{proposition}\label{P6.2} Let $\nu>n$, $\nu\in \mathbb{R}\setminus\mathbb{Z}$ and $\mu$ is a complex number such that $\mu\neq-i(2\ell+n\pm\nu)$ for $\ell=0,1,2,...$ and $Re(\mu^{2})<-P_{n}^{\nu}$, with $p_{n}^{\nu}=\max\{\mid s_{j}\mid,\hspace{0.2cm}0\leq j <\frac{\nu-n}{2}\}$ and $s_{j}=-(2j+n-\nu)^{2}$. Then, we have the following integral formula
\begin{align}\label{E6.15}
\nonumber&\int_0^{+\infty}\left|\frac{\Gamma(\frac{i\lambda+n-\nu}{2})
\Gamma(\frac{i\lambda+n+\nu}{2})}{\Gamma(i\lambda)}\right|^{2}\frac{1}{\lambda^2-\mu^2}\times
 \prescript{}{2}{F}_1^{}\left(\frac{n-\nu-i\lambda}{2},\frac{n-\nu+i\lambda}{2},n,-x\right)d\lambda\\
\nonumber&=\pi2^{2(\nu-n)}\frac{\Gamma(\frac{n-i\mu+\nu}{2})\Gamma(\frac{n-i\mu-\nu}{2})}{\Gamma(n)\Gamma(1-i\mu)}
(1+x)^{\frac{\nu+i\mu}{4}-\frac{n}{2}}\\
\nonumber&\times\prescript{}{2}{F}_1^{}\left(\frac{n-i\mu+\nu}{2},\frac{n-i\mu-\nu}{2},1-i\mu,\frac{1}{1+x}\right)\\
&-\frac{4\pi}{\Gamma(n)}2^{2(\nu-n)}\sum_{j=0}^{[\frac{\nu-n}{2}]}\frac{(\nu-n-2j)\Gamma(\nu-j)}{\Gamma(\nu-n-j+1)}\frac{1}
{\lambda_{j}^{2}-\mu^{2}}P_j^{(n-1,-\nu)}(2x+1).
\end{align}
\end{proposition}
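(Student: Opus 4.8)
The plan is to derive \eqref{E6.15} by comparing two closed forms for the resolvent (Green) kernel of $\Delta_{\nu}$: the integral representation produced in Proposition \ref{P4.2} and the explicit Green kernel of the magnetic Laplacian obtained in \cite{Gha}. The link between the two is a specialization of the spectral parameter $\xi$ appearing in \eqref{E4.17}.

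First I would put $\xi=-\mu^{2}-(\nu-n)^{2}$. Then $\lambda^{2}+(\nu-n)^{2}+\xi=\lambda^{2}-\mu^{2}$ and $\lambda_{j}^{2}+(\nu-n)^{2}+\xi=\lambda_{j}^{2}-\mu^{2}$, so the right-hand side of \eqref{E4.17} becomes exactly the combination of an integral over $\lambda\in(0,+\infty)$ and a finite sum over $j$ that occurs in \eqref{E6.15}. I would next check that the hypotheses of Proposition \ref{P6.2} are precisely the ones forced by Proposition \ref{P4.2}: the requirement $\operatorname{Re}(\xi)>\omega(\nu,n)=p_{n}^{\nu}-(n-\nu)^{2}$ becomes $\operatorname{Re}(\mu^{2})<-p_{n}^{\nu}$, while $\mu\neq -i(2\ell+n\pm\nu)$ for $\ell=0,1,2,\dots$ guarantees that $\mu^{2}$ misses both the discrete eigenvalues $s_{j}$ and the poles of the $C$-function, so that $(\tilde{\Delta}_{\nu}-\mu^{2})^{-1}$ is well defined. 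The convergence of the Laplace transform \eqref{E4.20} and its interchange with the spectral integral, which underlie Proposition \ref{P4.2}, are controlled by the contraction estimate \eqref{E4.19}, so nothing new is needed here. Since $\xi-\Delta_{\nu}=\xi+(\nu-n)^{2}+\tilde{\Delta}_{\nu}=\tilde{\Delta}_{\nu}-\mu^{2}$, the kernel $R_{\nu}(\xi,z,w)$ of \eqref{E4.17} is nothing but the Green kernel of $\tilde{\Delta}_{\nu}-\mu^{2}$.

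The second step is to obtain the same Green kernel in the closed form recorded in \cite{Gha}. Using the intertwining relation \eqref{E2.32}--\eqref{E2.36} (Lemma \ref{L2.1} with $\alpha=0$, $\beta=-\nu$, $\gamma=-\nu/2$), the operator $\Delta_{\nu}$ is, up to an additive constant, unitarily conjugate through $F\mapsto(1-|z|^{2})^{\nu/2}F$ to $-H_{\nu/2,1}$, whose resolvent kernel on $L^{2}(\mathbb{B}_{n},(1-|z|^{2})^{-n-1}dm(z))$ is given explicitly in \cite{Gha}. Transporting that kernel back through the unitary of Lemma \ref{L2.1} and inserting the substitution $x=\sinh^{2}(d(z,w))$, $\cosh^{2}(d(z,w))=1+x$, $\cosh(2d(z,w))=2x+1$ yields a second expression for $R_{\nu}(\xi,z,w)$ which, after dividing by the common factor $(1-\langle z,w\rangle)^{-\nu}$, is a constant times $\Gamma(\tfrac{n-i\mu+\nu}{2})\Gamma(\tfrac{n-i\mu-\nu}{2})\Gamma(1-i\mu)^{-1}(1+x)^{\frac{\nu+i\mu}{4}-\frac{n}{2}}\,{}_{2}F_{1}\!\bigl(\tfrac{n-i\mu+\nu}{2},\tfrac{n-i\mu-\nu}{2},1-i\mu;\tfrac{1}{1+x}\bigr)$ together with a discrete bound-state contribution proportional to $\sum_{j}\frac{(\nu-n-2j)\Gamma(\nu-j)}{\Gamma(\nu-n-j+1)}(\lambda_{j}^{2}-\mu^{2})^{-1}P_{j}^{(n-1,-\nu)}(2x+1)$.

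Finally, uniqueness of the resolvent kernel of the self-adjoint operator $\Delta_{\nu}$ forces the two forms to coincide. Equating them, cancelling $(1-\langle z,w\rangle)^{-\nu}$, collecting the two finite sums over $j$ into the single sum appearing in the last line of \eqref{E6.15}, and substituting the explicit Harish--Chandra function \eqref{E3.12}, for which $|C_{\nu}(\lambda)|^{-2}=2^{2(\nu-n)}\Gamma(n)^{-2}\bigl|\Gamma(\tfrac{i\lambda+n-\nu}{2})\Gamma(\tfrac{i\lambda+n+\nu}{2})/\Gamma(i\lambda)\bigr|^{2}$ for real $\lambda$, one clears the overall constant $\tfrac{\Gamma(n)}{2\pi^{n+1}2^{2(\nu-n)}}$ and arrives at \eqref{E6.15}. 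I expect the main obstacle to be this last matching: one must pull the Green kernel of $H_{\nu/2,1}$ out of \cite{Gha} in precisely the right normalization, propagate it correctly through the unitary conjugation and the change of variables, and reconcile all the $\pi$-, $2$- and $\Gamma$-factors as well as the way the discrete bound-state terms split between the two sides; once the discrete parts are reconciled, the identification of the continuous integral with the closed-form hypergeometric term is automatic.
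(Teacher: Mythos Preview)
Your proposal is correct and follows essentially the same route as the paper: compare the spectral-density resolvent of Proposition~\ref{P4.2} (with $\xi=-\mu^{2}-(\nu-n)^{2}$, which is exactly the paper's $\xi(\mu)=2n\nu-h_{\nu/2,1}(\mu)$) against the Green kernel of $H_{\nu/2,1}$ from \cite{Gha}, transported via the unitary of Lemma~\ref{L2.1}, then substitute $x=\sinh^{2}(d(z,w))$ and the explicit $C_{\nu}$. One small correction: the Green kernel pulled from \cite{Gha} (see \eqref{E6.20}) is a \emph{single} hypergeometric term with no discrete bound-state contribution, so there is only one finite sum over $j$ in the comparison---the one already present in \eqref{E4.17}---and it simply moves to the other side with a minus sign rather than combining with a second sum.
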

\begin{proof}
First, let us consider the following partial differential operator
\begin{align}\label{E6.16}
H_{\frac{\nu}{2},1}=-4(1-\mid z\mid^{2})[\sum_{1\leq i,j\leq n}(\delta_{ij}-z_{i}\bar{z_{j}})
\frac{\partial^{2}}{\partial z_{i}\partial\bar{z_{j}}}+\frac{\nu}{2}\sum_{j=1}^{n}(z_{j}\frac{\partial}{\partial z_{j}}-
\bar{z_{j}}\frac{\partial}{\partial\bar{z_{j}}})]+\nu^{2}\mid z\mid^{2}
\end{align}
acting on the Hilbert space $L^{2}(\mathbb{B}_{n},(1-\mid z\mid^{2})^{-n-1}dm(z))$. The above operator represents the operator
$H_{B,\rho}$ defined in \cite{Gha}, for $B=\frac{\nu}{2}$ and $\rho=1$. The authors in \cite{Gha} have computed the Green kernel $R_{\frac{\nu}{2},1}(\mu,z,w)$, which have called abusively the resolvent kernel. They have introduce the kernel $R_{\frac{\nu}{2},1}(\mu,z,w)$ as the right inverse \cite{Bea2} of the operators
\begin{align}\label{E6.17}
[H_{\frac{\nu}{2},1}-h_{\frac{\nu}{2},1}(\mu)],\hspace{0.2cm}h_{\frac{\nu}{2},1}(\mu)=\mu^{2}+\nu^{2}+n^{2}.
\end{align}
Precisely, that is the integral operator
\begin{align}\label{E6.18}
R_{\frac{\nu}{2},1}[F](z):=\int_{\mathbb{B}_{n}}R_{\frac{\nu}{2},1}(\mu,z,w)F(w)(1-\mid w\mid^{2})^{-n-1}dm(w)
\end{align}
acting on $L^{2}(\mathbb{B}_{n},(1-\mid z\mid^{2})^{-n-1}dm(z))$, which solves the partial differential equations
\begin{align}\label{E6.19}
[H_{\frac{\nu}{2},1}-h_{\frac{\nu}{2},1}(\mu)]g=F,\hspace{0.2cm}\mbox{for}\hspace{0.2cm}F\in L^{2}(\mathbb{B}_{n},(1-\mid z\mid^{2})^{-n-1}dm(z)).
\end{align}
Based on \cite{Gha}, the explicit expression
of the kernel $R_{\frac{\nu}{2},1}(\mu,z,w)$ is given by the following formula
\begin{align}\label{E6.20}
\nonumber R_{\frac{\nu}{2},1}(\mu,z,w)&=C_{\frac{\nu}{2},1}^{n}(\mu)\left(\frac{1-\overline{<z,w>}}{1-<z,w>}\right)^{\frac{\nu}{2}}
\left(\frac{(1-\mid z\mid^{2})(1-\mid w\mid^{2})}{\mid1-<z,w>\mid^{2}}\right)^{n-i\frac{\mu}{2}}\\
&\times\prescript{}{2}{F}_1^{}\left(\frac{n-i\mu+\nu}{2},\frac{n-i\mu-\nu}{2},1-i\mu,\frac{(1-\mid z\mid^{2})(1-\mid w\mid^{2})}{\mid1-<z,w>\mid^{2}}\right),
\end{align}
with the constant $C_{\frac{\nu}{2},1}^{n}(\mu)$ is given by
\begin{align}\label{E6.21}
C_{\frac{\nu}{2},1}^{n}(\mu)=\frac{1}{2\pi^{n}}\frac{\Gamma(\frac{n-i\mu+\nu}{2})
\Gamma(\frac{n-i\mu-\nu}{2})}{\Gamma(1-i\mu)},
\end{align}
and the parameter $\mu$ must satisfy the following condition
\begin{align}\label{E6.22}
\mu\neq-i(2\ell+n\pm\nu), \hspace{0.2cm}\ell=0,1,2,...\textbf{.}
\end{align}
Now, by using Lemma $(\ref{L2.1})$ for $\alpha=0$, $\beta=-\nu$ and $\gamma=\frac{-\nu}{2}$, we obtain the following
formula
\begin{align}\label{E6.23}
\Delta_{\nu}=(1-\mid z\mid^{2})^{\frac{-\nu}{2}}\left(\Delta_{\frac{\nu}{2},\frac{-\nu}{2}}+2\nu(n-\frac{\nu}{2})I\right)(1-\mid z\mid^{2})^{\frac{\nu}{2}}.
\end{align}
Note that the operator $H_{\frac{\nu}{2},1}$ can be also rewritten as
\begin{align}\label{E6.24}
H_{\frac{\nu}{2},1}=-\Delta_{\frac{\nu}{2},\frac{-\nu}{2}}+\nu^{2}I,
\end{align}
then the equation $(\ref{E6.23})$ becomes
\begin{align}\label{E6.25}
\Delta_{\nu}=(1-\mid z\mid^{2})^{\frac{-\nu}{2}}\left(-H_{\frac{\nu}{2},1}+2n\nu I\right)(1-\mid z\mid^{2})^{\frac{\nu}{2}}
\end{align}
which is equivalent to
\begin{align}\label{E6.26}
(1-\mid z\mid^{2})^{\frac{\nu}{2}}\left(-\Delta_{\nu}+2n\nu I\right)(1-\mid z\mid^{2})^{\frac{-\nu}{2}}=H_{\frac{\nu}{2},1}.
\end{align}
Using this above formula and returning back to equation $(\ref{E6.19})$, so this last equation becomes equivalent to the following equation
\begin{align}\label{E6.27}
\left[(2n\nu-h_{\frac{\nu}{2},1}(\mu))I-\Delta_{\nu}  \right](1-\mid z\mid^{2})^{\frac{-\nu}{2}}g=(1-\mid z\mid^{2})^{\frac{-\nu}{2}}F.
\end{align}
Now, by setting $\xi=\xi(\mu):=2n\nu-h_{\frac{\nu}{2},1}(\mu)$ in Proposition $(\ref{P4.1})$ under the condition
\begin{align}\label{E6.28}
Re(\xi(\mu))>\omega(\nu,n),
\end{align}
where $\omega(\nu,n)$ is the constant defined in $(\ref{E4.13})$, the equation $(\ref{E6.27})$ becomes equivalent to
\begin{align}\label{E6.29}
g(z)=\int_{\mathbb{B}_{n}}(1-\mid z\mid^{2})^{\frac{\nu}{2}}R(\xi(\mu),z,w)(1-\mid w\mid^{2})^{\frac{-\nu}{2}}    F(w)(1-\mid w\mid^{2})^{\nu-n-1}dm(w),
\end{align}
where $R(\xi(\mu),z,w)$ is the resolvent kernel given by the following formula
\begin{align}\label{E6.30}
\nonumber R(\xi(\mu),z,w)&=(1-\left<z,w\right>)^{-\nu}\sum_{j=0}^{[\frac{\nu-n}{2}]}\frac{\tau_j}
{\lambda_{j}^{2}-\mu^{2}}P_j^{(n-1,-\nu)}
(\cosh(2d(z,w)))\\
\nonumber&+(1-\left<z,w\right>)^{-\nu}\frac{\Gamma(n)}{2\pi^{n+1}2^{2(\nu-n)}}\\
 &\times \int_0^{+\infty}\frac{|c_\nu(\lambda)|^{-2}}{\lambda^2-\mu^2}\times
 \prescript{}{2}{F}_1^{}\left(\frac{n-\nu-i\lambda}{2},\frac{n-\nu+i\lambda}{2},n,-\sinh^2(d(z,w))\right)d\lambda.
\end{align}
where $\tau_{j}=\frac{2(\nu-n-2j)\Gamma(\nu-j)}{\pi^{n}\Gamma(\nu-n-j+1)}$ and $\lambda_{j}^{2}=-(\nu-n-2j)^{2}$.\\
Now, returning back to equation $(\ref{E6.19})$ and requiring that the parameter $\mu$ satisfy the both conditions
\begin{align}\label{E6.31}
\mu\neq-i(2\ell+n\pm\nu), \hspace{0.2cm}\mbox{for} \hspace{0.2cm}\ell=0,1,2,...\hspace{0.2cm}\mbox{and} \hspace{0.2cm} Re(\xi(\mu))>\omega(\nu,n).
\end{align}
Then the equation $(\ref{E6.19})$ becomes equivalent to
\begin{align}\label{E6.32}
g(z)=R_{\frac{\nu}{2},1}[F](z)=\int_{\mathbb{B}_{n}}R_{\frac{\nu}{2}}(\mu,z,w)F(w)(1-\mid w\mid^{2})^{-n-1}dm(w).
\end{align}
Moreover, the right inverse of the operator $[H_{\frac{\nu}{2},1}-h_{\frac{\nu}{2},1}(\mu)]$, in the sense of \cite{Bea2},
becomes its inverse. Then by the consideration of equations $(\ref{E6.29})$ and $(\ref{E6.32})$, we get the following relation
\begin{align}\label{E6.33}
R_{\frac{\nu}{2},1}=(1-\mid z\mid^{2})^{\frac{\nu}{2}} R(\xi(\mu),z,w)(1-\mid w\mid^{2})^{\frac{\nu}{2}},
\end{align}
for $\mu\neq-i(2\ell+n\pm\nu)$ and  $Re(\xi(\mu))>\omega(\nu,n)$. Notice that this last condition is equivalent to
\begin{align}\label{E6.34}
Re(\mu^{2})<-P_{n}^{\nu},
\end{align}
where $P_{n}^{\nu}$ is the parameter defined in equation $(\ref{E4.14})$. Then, the equation $(\ref{E6.33})$ is true for the parameter $\mu$ in the domain
\begin{align}\label{E6.35}
D_{n,\nu}=\{\mu=u+iv,\hspace{0.2cm}u^{2}-v^{2}<-P_{n}^{\nu}\hspace{0.2cm}\mbox{and}\hspace{0.2cm}
\mu\neq-i(2\ell+n\pm\nu)\hspace{0.2cm}\mbox{for}\hspace{0.2cm}\ell=0,1,2,...\hspace{0.2cm}\}.
\end{align}
By returning back to  formula $(\ref{E6.33})$ and using equations $(\ref{E6.20})$ with $(\ref{E6.30})$, we obtain for $\mu\in D_{n,\nu}$ the following relation
\begin{align}\label{E6.36}
\nonumber&(1-\left<z,w\right>)^{-\nu}\sum_{j=0}^{[\frac{\nu-n}{2}]}\frac{\tau_j}
{\lambda_{j}^{2}-\mu^{2}}P_j^{(n-1,-\nu)}
(\cosh(2d(z,w)))\\
\nonumber&+(1-\left<z,w\right>)^{-\nu}\frac{\Gamma(n)}{2\pi^{n+1}2^{2(\nu-n)}}\\
 \nonumber&\times \int_0^{+\infty}\frac{|c_\nu(\lambda)|^{-2}}{\lambda^2-\mu^2}\times
 \prescript{}{2}{F}_1^{}\left(\frac{n-\nu-i\lambda}{2},\frac{n-\nu+i\lambda}{2},n,-\sinh^2(d(z,w))\right)d\lambda\\
\nonumber&=(1-\mid z\mid^{2})^{\frac{-\nu}{2}}(1-\mid w\mid^{2})^{\frac{-\nu}{2}}C_{\frac{\nu}{2},1}^{n}(\mu)\left(\frac{1-\overline{<z,w>}}{1-<z,w>}\right)^{\frac{\nu}{2}}
\left(\frac{(1-\mid z\mid^{2})(1-\mid w\mid^{2})}{\mid1-<z,w>\mid^{2}}\right)^{n-i\frac{\mu}{2}}\\
&\times\prescript{}{2}{F}_1^{}\left(\frac{n-i\mu+\nu}{2},\frac{n-i\mu-\nu}{2},1-i\mu,\frac{(1-\mid z\mid^{2})(1-\mid w\mid^{2})}{\mid1-<z,w>\mid^{2}}\right).
\end{align}
Is not hard to show that the right hand side of the above equation can be rewritten as
\begin{align}\label{E6.37}
\nonumber&(1-\left<z,w\right>)^{-\nu}C_{\frac{\nu}{2},1}^{n}(\mu)\left(\frac{(1-\mid z\mid^{2})(1-\mid w\mid^{2})}{\mid1-<z,w>\mid^{2}}\right)^{n-\frac{\nu}{2}-i\frac{\mu}{2}}\\
&\times\prescript{}{2}{F}_1^{}\left(\frac{n-i\mu+\nu}{2},\frac{n-i\mu-\nu}{2},1-i\mu,\frac{(1-\mid z\mid^{2})(1-\mid w\mid^{2})}{\mid1-<z,w>\mid^{2}}\right).
\end{align}
By setting $x=\sinh^2(d(z,w))$, then $\cosh(d(z,w))=(1+x)^{\frac{1}{2}}$ and $\cosh(2d(z,w))=2x+1$.
Finally, by replacing the right hand side of equation $(\ref{E6.36})$ by the last expression given in $(\ref{E6.37})$ and the Harish-Chandra $c_\nu(\lambda)$ by its expression  given in $(\ref{E3.12})$ combined with the fact that $\cosh^2(d(z,w))=   \frac{\mid1-<z,w>\mid^{2}}{(1-\mid z\mid^{2})(1-\mid w\mid^{2})}$, the equation  $(\ref{E6.36})$ becomes as follows
\begin{align}\label{E6.38}
\nonumber&\int_0^{+\infty}\left|\frac{\Gamma(\frac{i\lambda+n-\nu}{2})
\Gamma(\frac{i\lambda+n+\nu}{2})}{\Gamma(i\lambda)}\right|^{2}\frac{1}{\lambda^2-\mu^2}\times
 \prescript{}{2}{F}_1^{}\left(\frac{n-\nu-i\lambda}{2},\frac{n-\nu+i\lambda}{2},n,-x\right)d\lambda\\
\nonumber&=\pi2^{2(\nu-n)}\frac{\Gamma(\frac{n-i\mu+\nu}{2})\Gamma(\frac{n-i\mu-\nu}{2})}{\Gamma(n)\Gamma(1-i\mu)}
(1+x)^{\frac{\nu+i\mu}{4}-\frac{n}{2}}\\
\nonumber&\times\prescript{}{2}{F}_1^{}\left(\frac{n-i\mu+\nu}{2},\frac{n-i\mu-\nu}{2},1-i\mu,\frac{1}{1+x}\right)\\
&-\frac{4\pi}{\Gamma(n)}2^{2(\nu-n)}\sum_{j=0}^{[\frac{\nu-n}{2}]}\frac{(\nu-n-2j)\Gamma(\nu-j)}{\Gamma(\nu-n-j+1)}\frac{1}
{\lambda_{j}^{2}-\mu^{2}}P_j^{(n-1,-\nu)}(2x+1),
\end{align}
which is the desired formula.
\end{proof}

\end{document}